
\documentclass[a4paper,12pt]{article}
\usepackage[utf8]{inputenc}
\usepackage[english]{babel}
\usepackage{amsfonts}
\usepackage{latexsym,amsmath,amssymb,amsthm}

\usepackage[active]{srcltx}

\usepackage[all]{xy}
\input diagxy
\usepackage{tikz-cd}

\setlength{\oddsidemargin}{25mm}
\setlength{\textwidth}{160mm}
\setlength{\topmargin}{5mm}
\setlength{\textheight}{250mm}
\hoffset=-2,2cm
\voffset=-2,5cm

\font\fr=eufm10 scaled \magstep 1 
\newtheorem{teor}{Theorem}
\newtheorem{prop}{Proposition}

\newtheorem{lem}{Lemma}

\newtheorem{definition}{Definition}
\theoremstyle{definition}

\theoremstyle{remark}

\newtheorem{remark}{Remark}

\def\beq{\begin{equation}}
\def\eeq{\end{equation}}
\def\bea{\begin{eqnarray}}
\def\eea{\end{eqnarray}}
\def\beann{\begin{eqnarray*}}
\def\eeann{\end{eqnarray*}}
\def\beasn{\begin{sneqnarray}}
\def\eeasn{\end{sneqnarray}}
\def\ben{\begin{enumerate}}
\def\een{\end{enumerate}}
\def\bit{\begin{itemize}}
\def\eit{\end{itemize}}


\def\qed{\ifvmode\removelastskip\fi
{\unskip\nobreak\hfil\penalty50\hbox{}\nobreak\hfil
\hbox{\vrule height1.2ex width1.2ex}\parfillskip=0pt
\finalhyphendemerits=0 \par\smallskip}}

\def\vf{\mbox{\fr X}}

\def\d{{\rm d}}

\def\Real{\mathbb{R}}


\def\Lie{\mathop{\rm L}\nolimits}

\renewcommand{\neq}{=\hspace{-3.5mm}/\hspace{2mm}}


\newcommand*{\pr}{\mathrm{pr}}
\newcommand*{\dd}{\mathrm{d}}
\newcommand*{\contr}[1]{\iota_{#1}}
\newcommand*{\Reeb}{\mathcal{R}}

\title{Optimal control, contact dynamics and Herglotz variational problem}
\author{\sffamily $^{a,b}$Manuel de Le\'on, $^a$Manuel Lainz,
$^c$Miguel C. Mu\~noz-Lecanda,
\thanks{emails: mdeleon@icmat.es, manuel.lainz@icmat.es,  
miguel.carlos.munoz@upc.edu,
}
\\[1ex]
\normalsize\itshape\sffamily 
$^a$Instituto de Ciencias Matem\'aticas(CSIC-UAM-UC3M-UCM), Madrid, Spain\\
\normalsize\itshape\sffamily 
$^b$Real Academia de Ciencias Exactas, Físicas y Naturales,  Madrid, Spain\\
\normalsize\itshape\sffamily
$^c$Department of Mathematics,
Universitat Polit\`ecnica de Catalunya,
Barcelona, Spain
}

\date{\today} 
\begin{document}
\maketitle
\begin{abstract}
 In this paper we combine two main topics in mechanics and optimal control theory: contact Hamiltonian systems and
Pontryagin Maximum Principle. As an important result, among others, we develop a contact Pontryagin Maximum Principle
that permits to deal with optimal control problems with dissipation.
We also consider the Herglotz optimal control problem, which is simultaneously a generalization of the Herglotz variational principle and an optimal control problem.
An application  to the study of a thermodynamic system is provided.  
\end{abstract}
\tableofcontents

\section{Introduction}
This paper tries to combine two important topics in mechanics and control theory: Hamiltonian contact systems and Pontryagin Maximum Principle in optimal control. 

On the one hand, Hamiltonian contact systems are getting a great popularity in recent times because they allow to describe dissipation dynamics, and several other types of physical systems in thermodynamics, quantum mechanics, circuit theory, control theory, etc. (see for instance \cite{Bravetti2019,goto2016,kho-2013,ramasb-2017,DeLeon2016b,GGMRR-2019b,silevadi-2020,Su-1999}). Recently, a generalization of contact geometry has been developed to describe field theories with dissipation \cite{GGMRR-2019, GGMRR-2020}. In fact, the Hamiltonian formulation in the scenario of contact structures exhibits very different characteristics to its counterpart in symplectic manifolds. Indeed, these differences are based on the fact that in the contact case they are Jacobi structures, more general than those of Poisson related to the simplectic ones. In variational terms, one can show that contact Hamiltonian equations can be derived from the so-called Herglotz principle, which includes as a particular case the classical Hamilton principle.

On the other hand, the Pontryagin Maximum Principle (PMP), (see \cite{Pon-1962, BaMu-2008} and references therein), is the most useful instrument for finding solutions to an optimal control problem. In fact, the PMP is the paradigm in the theory of optimal control, and since its formulation has never ceased research on its incredible properties, from very different points of view, although we will focus here on its more geometric aspects. An immediate issue arising from possible applications is that of studying problems of optimal control from the point of view of Hamiltonian contact systems, and therefore of systems with dissipative properties among many others. And, then, it seems very natural to ask whether a Pontryagin Maximum Principle could be developed to deal with a contact control problem.

Trying to look to both topics with a common viewpoint, we consider weather the solution curves to the Pontryagin Maximum Principle admit a formulation in terms of Hamiltonian contact systems in an adequate manifold and, on the contrary, if Herglotz variational problems can be understood as a particular class of optimal control problems.

With all this in mind, the paper is structured as follows. Sections 2 and 3 are dedicated to review the elements of Hamiltonian contact systems and Pontryagin Maximum Principle, both necessary to understand the object of the manuscript.

So, Section 2 is devoted just to recall the main notions and results
about contact Hamiltonian systems, including the so-called Herglotz principle, a natural extension of
the well-known Hamilton principle. As we said above, this section will faccilitate a better understanding of the rest of the paper.

Section 3 is dedicated for the Pontryagin Maximum Principle in several formulations. We introduce the classical optimal control problem, the associated extended system, the classical Pontryagin Maximum Principle and its transformation into the symplectic and presymplectic formulations. This last one is the used in several sections of the article.

In Section 4 we discuss an interesting particular case of Hamiltonian dynamics; indeed,
given a vector field $X$ on a manifold $M$, one can define the complete
lift of $X$ to its cotangente bundle $T^*M$ which is just the Hamiltonian vector field
of the Hamiltonian function determined by $X$, just its evaluation. Hence the dynamics of a general vector field is described as the corresponding to a Hamiltonian vector field in a symplectic manifold. But the dynamics on $T^*M$ 
is richer than one could expect. In fact, if the manifold $M$ decomposes as $M = \mathbb{R} \times M_o$, and the vector field has a symmetry property,
one has a very natural setting to distinguish two different  cases according to the value of the momentun $p_o$ corresponding to the global coordinate $x^o$. Indeed, one is (pre) symplectic ($p_o = 0$), and the second one, contact ($p_o \not= 0$).

Sections 5, 6 and 7 are the bulk of the paper. Section 5 is in a broader sense a direct application of Section 3.
We consider an optimal control system given by $(M, U, X, I, x_a, x_b)$ where 
$M = \mathbb{R} \times M_o$, that is, we study the so called extended system associated to an optimal control problem defined by a vector field depending on controls, $X(x,u)$, and a cost funcion $F$. Applying Theorem \ref{presym-pmp} in Section 3, we know that this problem
is equivalent to solve the dynamics of the presymplectic system $(T^* M \times U, \omega, H)$, where
$X = F \frac{\partial}{\partial x_o} + X^i \frac{\partial}{\partial x^i}$, $H = F p_o + X^i p_i$ is the linear Hamiltonian
given by $X$, and $\omega$ is the presymplectic form obtained by lifting the canonical symplectic form, $\omega_M\in\Omega^2(T^*M)$, to $T^*M \times U$. Here, $U$ represents obviously the space of controls.
The corresponding presymplectic algorithm provides the solutions, and
we can distinguish two cases: the regular one, when the controls can be obtained as functions
of the rest of variables, or the singular one, that produces higher order conditions.
Again, the evolution of the momentum $p_o$ is constant, and this permits, as above,
to discuss the cases where $p_o = 0$ or $p_o \not= 0$. With this in mind, we are able
to state the Contact Pontryagin Maximum Principle (Theorem 4).

Section 6 is just devoted to interpret the Herglotz principle as an Optimal Control Problem,
and derive the Herglotz equations of motion using the corresponding Pontryagin principle. In Section 7 we 
state the Herglotz Optimal Control Problem and find the solution equations. In this situation, the extremal condition, given as an integral of the cost function in the classical optimal control problems, is changed into an extremal condition on the solutions of a differential equation on a new variable to be maximized. This problem is a generalization of the classical optimal control systems in the sense that we obtain the classical equations if the cost function and the extremal condition is like in the classical situation. Finally, in Section 8 we apply
the above results to an example coming from Thermodynamics.

All the manifolds and mappings are considered as of $\mathcal{C}^\infty$--class. The usual Einstein convention for summation indices will be understood unless indicated. As general references for notations and basic results on geometry, mechanics and control we use \cite{abraham1978,BuLe-2005,Blo-2015}.

\section{Precontact Hamiltonian systems}

In this section we review the necessary theory of contact manifolds, contact and precontact dynamical systems, in both  Hamiltonian and Lagrangian formulations, and Herglotz variational principle and its generalized Euler-Lagrange equations. 
See \cite{arn78,bravetti2017,Bravetti2017a,DeLeon2019,GGMRR-2019,Geiges-2008,GuGoGu-96,Lainz2018,LIU2018} for details.
\subsection{Contact manifolds and Hamiltonian systems}
A \emph{contact manifold} $(M, \eta)$ is a $(2n+1)$-dimensional manifold equipped with a \emph{contact form} $\eta$, that is a 1-form satisfying
$\eta \wedge (\dd\eta)^n \not= 0$.
Then, there exist a unique vector field $\Reeb$, called the \emph{Reeb vector field},  such that
\begin{equation}
	i_{\mathcal R} \, \dd \eta = 0 \; , \qquad i_{\mathcal R}\, \eta = 1.
\end{equation}

Given $(M,\eta)$, there is a Darboux theorem for contact manifolds: around each point in $M$ one can find local \emph{Darboux coordinates} $(q^i, p_i, z)$ 
such that
\begin{equation}
	 \eta = \dd z - p_i \, \dd q^i,\quad \mathcal R = \frac{\partial}{\partial z}\, .
\end{equation}

As an example, and a natural model, we have the \emph{extended cotangent bundle} $T^*Q \times \Real$ of an $n$-dimensional manifold $Q$, which carries a natural contact form
\begin{equation}\label{tqxR,eta}
    \eta_Q = \dd z - \theta_Q,
\end{equation}
where $\theta_Q$ is the pullback of the Liouville 1-form of $T^* Q$, $ \theta_Q = p_i \dd q^i$, 
being $(q^i,p_i,z)$ the natural bundle coordinates of $T^*Q \times \Real$.

If $(M,\eta)$ is a contact manifold, the map:
\begin{align*}
    \bar{\flat} : TM &\to T^* M ,\\
     v &\mapsto \contr{v}  \dd \eta + \eta (v)  \eta.
\end{align*}
is a vector bundle isomorphism over $M$.

Given a Hamiltonian function $H:M \to \Real$, we can define a dynamical system. The triple $(M,\eta,H)$ is called a \emph{contact Hamiltonian system}. The associated Hamiltonian vector field $X_H$ is the solution to the following equation
\begin{equation}\label{Hamiltonian_vf_contact}
    \bar{\flat} (X_H) = \dd H - (\mathcal R (H) + H) \, \eta \, .
\end{equation}
In Darboux coordinates, $X_H$ has the local expression
\begin{equation}\label{eq:contact_Hamiltonian_vf_darboux}
X_H = \frac{\partial H}{\partial p_i} \frac{\partial}{\partial q^i} - 
\left({\frac{\partial H}{\partial q^i} + p_i \frac{\partial H}{\partial z}}\right)  \frac{\partial}{\partial p_i} + 
\left({p_i \frac{\partial H}{\partial p_i} - H}\right) \frac{\partial}{\partial z}\, .
\end{equation}
Therefore, an integral curve $(q^i(t), p_i(t), z(t))$ of $X_H$ satisfies the differential equations
\begin{subequations}
    \begin{align*}\label{hcont3}
    \frac{\dd q^i}{\dd t} & =  \frac{\partial H}{\partial p_i}, \\
    \frac{\dd p_i}{\dd t} & =  - \frac{\partial H}{\partial q^i} - p_i \frac{\partial H}{\partial z},\\
    \frac{\dd z}{\dd t} & =  p_i \frac{\partial H}{\partial p_i} - H.
    \end{align*} 
\end{subequations}

\subsection{Precontact manifolds and Hamiltonian systems}

Let $\eta$ be a $1$-form on an $m$-dimensional manifold $M$. We define the \emph{characteristic distribution} of $\eta$ as
\begin{equation}
  \mathcal{C}= \ker \eta \cap \ker \dd \eta \subseteq TM\, ,
\end{equation}
which we suppose to be regular. We say that $\eta$ is  a \emph{1--form of class} $c$ if the rank of the distribution $\mathcal{C}$ is $m-c$. There exist some characterizations of this notion for a 1--form given in the following~\cite{God-1969}

\begin{prop}\label{thm:class_of_form}
  Let $\eta$ be a one-form on an $m$-dimensional manifold $M$. Then, the following statements are equivalent:
  \begin{enumerate}
    \item The form $\eta$ is of class $2r+1$.
    \item At every point of $M$,
    \begin{equation}
      \eta \wedge {(\dd \eta)}^r \neq 0, \quad 
      \eta \wedge {(\dd \eta)}^{r+1} = 0.
    \end{equation}
    \item Around any point of $M$, there exist local \emph{Darboux} coordinates $x^1,\ldots x^r$, $y_1, \ldots y_r$, $z$, $u_1, \ldots u_s$, where $2r+s+1 = m$, such that
    \begin{equation}
      \eta = \dd z - \sum_{i=1}^r y_i \dd x^i.
    \end{equation}
  \end{enumerate}
\end{prop}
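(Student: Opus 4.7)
The plan is to establish the cycle (3) $\Rightarrow$ (2) $\Rightarrow$ (1) $\Rightarrow$ (3), with the last implication being the substantive one.

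First, for (3) $\Rightarrow$ (2), I would compute directly in the given Darboux coordinates. Since $\dd\eta = \sum_{i=1}^r \dd x^i \wedge \dd y_i$, the $r$-th exterior power is, up to a nonzero combinatorial factor, $\dd x^1 \wedge \dd y_1 \wedge \cdots \wedge \dd x^r \wedge \dd y_r$, which is nonzero; wedging with $\dd z$ (supplied by $\eta$) keeps it nonzero. On the other hand, $(\dd\eta)^{r+1}=0$ for dimensional reasons, since only the $2r$ one-forms $\dd x^i,\dd y_i$ appear, so $\eta\wedge(\dd\eta)^{r+1}=0$.

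For (2) $\Rightarrow$ (1), I would work pointwise with linear algebra on $T_xM$. The two conditions together state exactly that the skew-symmetric form $\dd\eta|_x$ has rank $2r$, so $\dim \ker\dd\eta = m-2r$. The condition $\eta\wedge(\dd\eta)^r\neq 0$ implies that $\eta$ does not vanish on $\ker\dd\eta$, hence $\ker\eta+\ker\dd\eta = T_xM$. By the inclusion–exclusion formula for dimensions, $\dim\mathcal{C} = \dim(\ker\eta\cap\ker\dd\eta) = (m-1)+(m-2r)-m = m-2r-1$, so $\eta$ is of class $2r+1$, and since the ranks are constant, $\mathcal{C}$ is a regular distribution.

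The main work is in (1) $\Rightarrow$ (3). The key observation is that the characteristic distribution $\mathcal{C}$ is involutive: for $X,Y\in\mathcal{C}$ one has $\contr{X}\eta=0$ and $\contr{X}\dd\eta=0$, so by Cartan's formula $\Lie_X\eta = 0$ and $\Lie_X\dd\eta=0$; using the identity $\contr{[X,Y]} = \Lie_X\contr{Y}-\contr{Y}\Lie_X$ one checks that $\contr{[X,Y]}\eta=0$ and $\contr{[X,Y]}\dd\eta=0$, so $[X,Y]\in\mathcal{C}$. By the Frobenius theorem, $\mathcal{C}$ integrates to a foliation of rank $m-2r-1=s$, and one can pick a local submersion $\pi\colon U\to N$ onto a $(2r+1)$-dimensional transversal $N$ whose fibers are the leaves. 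Since $\Lie_X\eta=0$ and $\contr{X}\eta=0$ for every $X\in\mathcal{C}$, the form $\eta$ is basic, so $\eta=\pi^*\tilde\eta$ for a unique $1$-form $\tilde\eta$ on $N$. The pointwise computation above shows $\tilde\eta\wedge(\dd\tilde\eta)^r\neq 0$ on $N$, so $(N,\tilde\eta)$ is a contact manifold. Applying the Darboux theorem for contact manifolds recalled earlier in the excerpt, one obtains coordinates $(x^1,\dots,x^r,y_1,\dots,y_r,z)$ on $N$ with $\tilde\eta=\dd z-\sum y_i\dd x^i$. Pulling these back along $\pi$ and completing by any transverse coordinates $u_1,\dots,u_s$ along the leaves yields the required Darboux expression on $M$.

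The main obstacle is the involutivity of $\mathcal{C}$ and the descent of $\eta$ to the quotient, since everything else reduces either to straightforward linear algebra or to the already quoted contact Darboux theorem; once involutivity and basicness are in hand, the reduction to the contact case is automatic.
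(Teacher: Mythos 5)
The paper offers no proof of this proposition at all: it is quoted from Godbillon's book \cite{God-1969}, so there is no in-text argument to compare your route against. Your overall architecture --- direct computation for (3)$\Rightarrow$(2), pointwise linear algebra for (2)$\Rightarrow$(1), and involutivity of $\mathcal{C}$ plus Frobenius plus the contact Darboux theorem for (1)$\Rightarrow$(3) --- is the standard one, and the first and third implications are essentially sound as you present them. (For (1)$\Rightarrow$(3), the appeal to ``the pointwise computation above'' should be replaced by the short dimension count showing that a $1$-form with trivial characteristic space on a $(2r+1)$-dimensional manifold is necessarily contact: $\operatorname{rank}\dd\tilde\eta=2\rho$ forces $\dim(\ker\tilde\eta\cap\ker\dd\tilde\eta)\geq 2r-2\rho$, so $\rho=r$ and $\tilde\eta$ does not vanish on the line $\ker\dd\tilde\eta$.)

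The genuine gap is in (2)$\Rightarrow$(1), at the sentence claiming that $\eta\wedge(\dd\eta)^r\neq 0$ together with $\eta\wedge(\dd\eta)^{r+1}=0$ ``state exactly'' that $\dd\eta|_x$ has rank $2r$. This is false: the vanishing of $\eta\wedge(\dd\eta)^{r+1}$ does not imply the vanishing of $(\dd\eta)^{r+1}$. Take the Liouville form $\eta=\sum_{i=1}^{r+1}y_i\,\dd x^i$ on $\Real^{2r+2}$ at a point where it is nonzero: then $\eta\wedge(\dd\eta)^{r}\neq 0$ and $\eta\wedge(\dd\eta)^{r+1}=0$ for trivial degree reasons (it is a $(2r+3)$-form on a $(2r+2)$-manifold), yet $(\dd\eta)^{r+1}\neq 0$, $\ker\dd\eta=0$, hence $\mathcal{C}=0$ and the class is $2r+2$, not $2r+1$; correspondingly the normal form in (3) is impossible (already for $r=0$, $\eta=y\,\dd x$ is not closed, so it is not locally $\dd z$). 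So condition (2) as written is strictly weaker than (1) and (3), and no proof can close the cycle. The defect is in the transcription of the statement itself: Godbillon's characterization of odd class $2r+1$ is $\eta\wedge(\dd\eta)^r\neq 0$ together with $(\dd\eta)^{r+1}=0$ (without the $\eta\wedge$ in the second condition), which is exactly what distinguishes class $2r+1$ from class $2r+2$. Under that corrected hypothesis your rank assertion becomes true, the inclusion--exclusion count $\dim\mathcal{C}=(m-1)+(m-2r)-m=m-2r-1$ is valid, and the rest of your argument goes through; you should prove that corrected statement and flag the discrepancy rather than argue from the version printed here.
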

In these Darboux coordinates, the characteristic distribution of $\eta$ is given by
\begin{equation}
  \mathcal{C} = \langle\{ \frac{\partial }{\partial u^a} \}_{a=1,\ldots,s}\rangle.
\end{equation}

A pair $(M,\eta)$ of a manifold $M$ equipped with a form $\eta$ as above will be called a \emph{precontact manifold}, (see \cite{God-1969}). The form $\eta$ will be called a precontact form.

\begin{remark}
  The distribution $\mathcal{C}$ is involutive and it gives rise to a foliation of $M$. If the quotient $\pi:M \to M/\mathcal{C}$ has a manifold structure, then there is a unique $1$-form $\tilde\eta$ such that 
  $\pi^* \tilde{\eta}=\eta$. From a direct computation, $\tilde{\eta}$ is a contact form on $M/\mathcal{C}$. This justifies the name of \emph{precontact form}.
\end{remark}

Given $(M,\eta)$, the following map
\begin{equation}
  \begin{aligned}
    {\flat}: TM &\to TM^*\\
    v &\mapsto \contr{v} \dd \eta + \eta(v) \eta,
  \end{aligned}
\end{equation}
is a morphism of vector bundles over $M$ and its kernel is $\mathcal{C}$.

A \emph{Reeb vector field} for $(M,\eta)$ is a vector field $\Reeb$ on $M$ such that
\begin{equation}
  \contr{\Reeb} \dd \eta = 0, \qquad \eta(\Reeb) = 1,
\end{equation}
or, equivalently $\flat(\Reeb) = \eta$.

We note that there exist Reeb vector fields in every precontact manifold. Indeed we can define local vector fields $\Reeb = \frac{\partial }{\partial z} $ in Darboux coordinates and can extend it using partitions of unity. However, unlike on contact manifolds, they are not unique. In fact, given a Reeb vector field $\Reeb$ and any section $C$ of $\mathcal{C}$, we have that $\Reeb' = \Reeb + C$ is another Reeb vector field.

\subsubsection{Precontact Hamiltonian systems and the constraint algorithm}
A \textit{precontact Hamiltonian system} is a precontact manifold $(M,\eta)$ with a smooth function $H:M \to \Real$ called the Hamiltonian. We denote it by $(M,\eta, H)$.

For a precontact Hamiltonian system $(M,\eta, H)$, given a submanifold $M'\subset M$, a \textit{Hamiltonian vector field} along $M'$ is a vector field $X$, such that $X|_M\in\vf(M')$ and solution to the equation
\begin{equation}\label{eq:precontact_Hamiltonian_vf}
  \flat(X) = \dd H - (H + \Reeb(H)) \eta,
\end{equation}
at the points of $M'$, and being $\Reeb$ any Reeb vector field. It can be seen that, if this equation holds for one Reeb vector field, it will hold for all of them.

Notice that, since $\flat$ is not an isomorphism, then \eqref{eq:precontact_Hamiltonian_vf} might not have solutions at every point of the manifold $M$. Furthermore, solutions, if they exists, are not necessarily unique. Indeed, adding a section $C$ of $\mathcal{C}$ to a solution $X$ gives rise to a new solution $X' = X + C$. In order to obtain the maximal submanifold along which Hamiltonian vector fields are defined, we can develop a \emph{constraint algorithm}. To do so, let $\gamma_H=\dd H - (H + \Reeb(H)) \eta\in\Omega
^1(M)$ and define inductively $M_0 = M$, and for any positive integer $i$,
\begin{equation}
  M_{i} = \{p \in M_i \mid (\gamma_H)_p \in {\flat}(T_p M_{i-1}) \},
\end{equation}
where we assume that all $M_i$ are manifolds.

The algorithm will eventually stop, that is, we will find a positive integer $i$ such that $M_i = M_{i-1}$. We call this submanifold the final constraint submanifold $M_f$. If $M_f$ has positive dimension, there will exist Hamiltonian vector fields along $M_f$. The pair $(M_f,X)$ will be called a Hamiltonian vector field solution to the Hamiltonian precontact system $(M,\eta, H)$.

A useful characterization of such pairs is given by the following

\begin{prop}
  $X$ is a Hamiltonian vector field along $M'$ for $(M,\eta,H)$ if and only if, at the points of $M'$,
\begin{subequations}\label{eq:precontact_Hamiltonian_vf_2}
  \begin{align}
    \eta(X) &= -H,\label{eq:precontact_Hamiltonian_vf_2-1}\\
    \Lie_{X} \eta &= g \eta, \label{eq:precontact_Hamiltonian_vf_2-2}
  \end{align}
\end{subequations}
where $g:M' \to \Real$. Moreover, if this holds, then $g = - \Reeb(H)$ for any Reeb vector field $\Reeb$.
\end{prop}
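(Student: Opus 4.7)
The plan is to verify both implications by contracting the defining Hamiltonian equation with the Reeb vector field and applying Cartan's magic formula
$\Lie_X \eta = \contr{X}\dd\eta + \dd(\eta(X))$, together with the Reeb identities $\contr{\Reeb}\dd\eta = 0$ and $\eta(\Reeb) = 1$.

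For the forward direction, assume $\flat(X) = \dd H - (H + \Reeb(H))\eta$ holds on $M'$. Contract both sides with an arbitrary Reeb vector field $\Reeb$. Since $\contr{\Reeb}\contr{X}\dd\eta = -\contr{X}\contr{\Reeb}\dd\eta = 0$ and $\contr{\Reeb}\eta = 1$, the left-hand side yields $\eta(X)$, while the right-hand side yields $\Reeb(H) - (H + \Reeb(H)) = -H$. This gives equation~\eqref{eq:precontact_Hamiltonian_vf_2-1}. Next, solve the Hamiltonian equation for $\contr{X}\dd\eta$: using $\eta(X) = -H$, we get $\contr{X}\dd\eta = \dd H - (H + \Reeb(H))\eta - \eta(X)\eta = \dd H - \Reeb(H)\eta$. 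Cartan's formula then delivers $\Lie_X \eta = \contr{X}\dd\eta + \dd(\eta(X)) = \dd H - \Reeb(H)\eta - \dd H = -\Reeb(H)\eta$, which is~\eqref{eq:precontact_Hamiltonian_vf_2-2} with $g = -\Reeb(H)$.

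For the converse, assume $\eta(X) = -H$ and $\Lie_X\eta = g\eta$ for some function $g: M' \to \Real$. First identify $g$ by contracting the second equation with $\Reeb$: on one hand $\contr{\Reeb}(g\eta) = g$, on the other $\contr{\Reeb}\Lie_X\eta = \contr{\Reeb}\contr{X}\dd\eta + \Reeb(\eta(X)) = 0 + \Reeb(-H) = -\Reeb(H)$, so $g = -\Reeb(H)$. Then Cartan's formula rewrites the hypothesis as $\contr{X}\dd\eta = g\eta - \dd(\eta(X)) = g\eta + \dd H$. Adding $\eta(X)\eta = -H\eta$ to both sides gives
\begin{equation*}
\flat(X) = \contr{X}\dd\eta + \eta(X)\eta = \dd H + (g - H)\eta = \dd H - (H + \Reeb(H))\eta,
\end{equation*}
which is precisely~\eqref{eq:precontact_Hamiltonian_vf}. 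Independence from the choice of Reeb is automatic: the characterization~\eqref{eq:precontact_Hamiltonian_vf_2} makes no reference to $\Reeb$, and the expression $g = -\Reeb(H)$ is the same for any Reeb vector field (if $\Reeb' = \Reeb + C$ with $C$ a section of $\mathcal{C}$, the argument above shows $C(H) = 0$ as a consistency condition for the existence of a Hamiltonian vector field on $M'$).

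There is essentially no hard step: the argument is purely algebraic once one recognizes the right pair of operations, namely contracting with $\Reeb$ to isolate the scalar data ($\eta(X)$ and $g$), and using Cartan's formula to convert between $\contr{X}\dd\eta$ and $\Lie_X\eta$. The only minor subtlety is that in the converse the value of $g$ is not assumed but forced, which is what allows the two conditions~\eqref{eq:precontact_Hamiltonian_vf_2} to recover the full Hamiltonian equation.
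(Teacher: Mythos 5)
Your proof is correct and follows essentially the same route as the paper's: contract the defining equation (or the Lie-derivative condition) with a Reeb vector field to isolate $\eta(X)=-H$ and $g=-\Reeb(H)$, then use Cartan's formula to pass between $\contr{X}\dd\eta$ and $\Lie_X\eta$ in both directions. The only difference is cosmetic (you make the intermediate identity $\contr{X}\dd\eta=\dd H-\Reeb(H)\eta$ explicit, and add a correct aside on why $C(H)=0$ for sections $C$ of the characteristic distribution), so nothing further is needed.
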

\begin{proof}
  Let $X$ be a Hamiltonian vector field along $M'$. By the definition of $\flat$, equation \eqref{eq:precontact_Hamiltonian_vf}, at the points of $M'$, becomes
  \begin{equation}\label{eq1}
    \contr{X} \dd \eta + \eta(X) \eta =  \dd H - (H + \Reeb(H)) \eta,
  \end{equation}
  and, by contraction with $\Reeb$, we obtain
  \begin{equation}\label{eq2}
    \eta(X) = -H.
  \end{equation}
  Combining \eqref{eq1} and \eqref{eq2}, we deduce
  \begin{equation}
    \contr{X} \dd \eta + \dd \contr{X} \eta = - \Reeb(H) \eta,
  \end{equation}
  but the left hand side of this equation equals $\Lie_X \eta$ by Cartan's formula, hence $X$ fulfills \eqref{eq:precontact_Hamiltonian_vf_2} at the points of $M'$.

  Now assume that $X$ satisfies \eqref{eq:precontact_Hamiltonian_vf_2} on the points of $M'$. Once again, by contraction of \eqref{eq:precontact_Hamiltonian_vf_2-2} with a Reeb vector field $\Reeb$, we have
  \begin{equation}
    g = \contr{\Reeb} \Lie_X (\eta) =  \contr{\Reeb} (\contr{X} \dd \eta +  \eta(X) ) = - \contr{\Reeb}( \dd H) = -\Reeb(H).
  \end{equation}
  Combining this with \eqref{eq:precontact_Hamiltonian_vf_2}, we can easily retrieve \eqref{eq1}.
\end{proof}

\subsubsection{Morphisms of precontact Hamiltonian systems}

Let $(M,\eta, H)$ and $(\bar{M},\bar{\eta},\bar{H})$ be precontact Hamiltonian systems. A map $F:M \to \bar{M}$ is said to be a \emph{conformal morphism of precontact systems} if $F^* \bar{\eta} = f \eta$ and $F^* \bar{H} = f H$ for some non-vanishing function $f:M\to \Real$. If $f=1$, we say that $F$ is a \emph{strict morphism of precontact systems}. 
\begin{teor}\label{thm:precontact_equivalence}
  Let  $F:M \to \bar{M}$ be a conformal morphism of precontact systems. Assume that $X, \bar{X}$ are $F$-related vector fields defined along submanifolds $M' \subseteq M$ and $\bar{M}' = F(M') \subseteq \bar{M}$, respectively. Therefore, if $\bar{X}$ is a Hamiltonian vector field along $\bar M'$, then $X$ is also a Hamiltonian vector field along $M'$.
\end{teor}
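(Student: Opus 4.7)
The plan is to reduce the theorem to the characterization of Hamiltonian vector fields given in the preceding proposition: $X$ is Hamiltonian along $M'$ for $(M,\eta,H)$ if and only if, on $M'$, one has $\eta(X) = -H$ and $\Lie_X \eta = g\,\eta$ for some function $g$. Since the same characterization is available for the barred system, the strategy is simply to pull these two identities back through $F$, using the conformal hypothesis $F^*\bar\eta = f\eta$, $F^*\bar H = fH$ and the $F$-relatedness of $X$ and $\bar X$, and to check that the non-vanishing conformal factor $f$ can be cancelled.

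First I would check the scalar identity. By $F$-relatedness, at every $p \in M'$ one has $(\bar\eta(\bar X))\circ F = (F^*\bar\eta)(X) = f\,\eta(X)$, while the hypothesis $\bar\eta(\bar X) = -\bar H$ along $\bar M'$ pulls back to $-fH$ on $M'$. Dividing by the nowhere-zero function $f$ yields $\eta(X) = -H$ on $M'$.

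Next I would transport the Lie-derivative equation. $F$-relatedness gives the naturality identity $F^*(\Lie_{\bar X}\bar\eta) = \Lie_X(F^*\bar\eta)$ along $M'$. Using $F^*\bar\eta = f\eta$ and the Leibniz rule, the right-hand side becomes $X(f)\,\eta + f\,\Lie_X \eta$, while the left-hand side, using $\Lie_{\bar X}\bar\eta = \bar g\,\bar\eta$, becomes $(\bar g\circ F)\,f\,\eta$. Equating and dividing by $f$ gives $\Lie_X \eta = g\,\eta$ with $g = (\bar g \circ F) - X(f)/f$, which is the second condition in the characterization.

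The only real subtlety — and what I expect to be the main bookkeeping point rather than a genuine obstacle — is that both identities are only claimed along the submanifolds $M'$ and $\bar M' = F(M')$, not globally. Since $X|_{M'} \in \vf(M')$ and $F$ sends $M'$ to $\bar M'$, the $F$-relatedness is meaningful pointwise on $M'$ and all pull-backs of equalities that hold on $\bar M'$ produce equalities on $M'$; no differentiation transverse to $M'$ is required. Once the two equations above are established on $M'$, the preceding proposition identifies $X$ as a Hamiltonian vector field for $(M,\eta,H)$ along $M'$, completing the proof.
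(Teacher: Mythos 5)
Your proposal is correct and follows essentially the same route as the paper's own proof: both reduce the statement to the two-equation characterization $\eta(X)=-H$, $\Lie_X\eta=g\eta$, pull these back through $F$ using the conformal relations and $F$-relatedness, and cancel the non-vanishing factor $f$ to obtain $g=\bar g\circ F-\Lie_X f/f$. Your extra remark about the identities only being needed pointwise along $M'$ is a reasonable clarification of a point the paper leaves implicit.
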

\begin{proof}
  Since $\bar{X}$ is a Hamiltonian vector field, its satisfies \eqref{eq:precontact_Hamiltonian_vf_2} along $\bar{M}'$
  \begin{subequations}
    \begin{align}
      \bar{\eta}(\bar{X}) &= -\bar{H},\\
      \Lie_{\bar{X}} \bar{\eta} &= \bar{g} \bar{\eta},
    \end{align}
  \end{subequations}
  
  Pulling back by $F$, we obtain
  \begin{subequations}
    \begin{align}
      f \eta(X) = -f H ,\\
      \Lie_{{X}} (f {\eta}) &= (\bar{g}\circ F) f {\eta}.
    \end{align}
  \end{subequations}
  Reorganizing, we obtain
  \begin{subequations}
    \begin{align}
       \eta(X) = - H ,\\
      \Lie_{X} ( {\eta}) &= g {\eta},
    \end{align}
  \end{subequations}
  where $g = \bar{g}\circ F - (\Lie_{X} f)/f$. Hence $X$ is a Hamiltonian vector field.
\end{proof}

Observe that if $F$ is a diffeomorphism, then we have a bijective correspondence between pairs of Hamiltonian vector fields along submanifolds.

\subsection{The Lagrangian formalism}
Unlike $T^*Q \times \Real$, the manifold $TQ \times \Real$ does not have a canonical contact structure. However, given a \emph{Lagrangian function} $L:TQ \times \Real \to \Real$ one can construct the $1$-form
\begin{equation}\label{eq:Lagrangian_form}
    \eta_L = \dd z - \theta_L,    
\end{equation}
where $\theta_L$ is the associated Lagrangian 1-form, which in bundle coordinates $(q^i, v^i, z)$ is written as
\begin{equation}
    \theta_L = \frac{\partial L}{\partial v^i} \dd q^i.
\end{equation}

The Lagrangian $L$ is said to be \emph{regular} if its Hessian matrix with respect to the velocities,
\begin{equation}
    (W_{ij}) = \left( \frac{\partial^2 L}{\partial v^i \partial v^j} \right),
\end{equation}
is regular.

One can see that $\eta_L$ is contact form when $L$ is regular. Furthermore, $\eta_L$ is a precontact form when $(W_{ij})$ has constant rank (see~\cite[Section]{DeLeon2019}).

The energy of the Lagrangian is $E_L = \Delta (L) - L$
where $\Delta$ is the canonical Liouville vector field on $TQ$, $ \Delta = v^i \frac{\partial}{\partial v^i}$, extended in the usual way to $TQ \times \Real$ with the same local expression.

Hence, provided $L$ is such that $(W_ij)$ has full (resp. constant) rank we have that $(TQ \times \Real, \eta_L, E_L)$ is a contact (resp. precontact) Hamiltonian system. Let $\xi_L$ be a Hamiltonian vector field for this contact or precontact system. From a direct computation one can see that every integral curve $(q^i(t), v^i(t), z(t))$ of $\xi_L$ is a solution of the \emph{Herglotz equations}:
\begin{equation}\label{eq:Herglotz}
\frac{\dd}{\dd t} \left(\frac{\partial L}{\partial v^i}\right) - \frac{\partial L}{\partial q^i} =
\frac{\partial L}{\partial v^i} \frac{\partial L}{\partial z},
\end{equation}
and $\dot{z}(t) = L(q^i(t), v^i(t), z(t))$. These equations are also called generalized Euler--Lagrange equations.

Notice that, in the contact case, $\bar{\xi}_L$ is a second order differential equation, a SODE, meaning that its integral curves satisfy $v^i(t) = \dot{q}^i(t)$ . In the precontact case, the situation is more subtle. If there exist solutions, which are not necessarily unique, there is at least one which is a SODE. The details are explained in~\cite[Section~10]{DeLeon2019}.

\subsubsection{The Herglotz variational principle}\label{Hvp}
The integral curves of a contact Lagrangian system can also be obtained from a variational principle. Unlike in the case of Hamilton's principle, the action is not an integral of the Lagrangian, but it is given by an ordinary differential equation on a new variable $z$.

Given a Lagrangian function, $L:TQ \times \Real \to \Real$, for $q_o,q_1\in Q$, we consider the set $\Omega(q_0,q_1)$ of curves $\gamma:[a,b] \to \Real$ such that $\gamma(a) = q_0$,  $\gamma(b) = q_1$; and fix $z_0 \in \Real$. We define the functional
\begin{equation}
    \mathcal{Z}:\Omega(q_0,q_1) \to \mathcal{C}^\infty ([a,b] \to \mathbb{R}),
\end{equation}
 which assigns to each curve $\gamma$ the curve $\mathcal{Z}(\gamma)$ that solves the following ODE:
\begin{equation}\label{contact_var_ode}
\begin{aligned}
    \frac{\dd\mathcal{Z}(c)}{\dd t} &= L(c, \dot{c}, \mathcal{Z}(c)),\\
    \mathcal{Z}(\gamma)(a) &= z_0.
    \end{aligned}
\end{equation}
Finally, the action is given by evaluating the solution at the endpoints:
\begin{equation}
\begin{aligned}
    \mathcal{A}:\Omega(q_0,q_1) \to \Real,
    \gamma &\mapsto \mathcal{Z}(\gamma)(b)
\end{aligned}
\end{equation}
Using techniques from calculus of variations\cite[Section~5]{DeLeon2019}, one can proof the following:
\begin{teor}[Contact variational principle]
    Let $L: TQ \times \Real \to \Real$ be a Lagrangian function and let $\gamma \in  \Omega(q_o,q_1)$. Then, $(\gamma,\dot\gamma, \mathcal{Z}(\gamma))$ satisfies the Herglotz's equations \eqref{eq:Herglotz} if and only if $\gamma$ is a critical point of $\mathcal{A}$.
\end{teor}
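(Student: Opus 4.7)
The plan is to compute the first variation of $\mathcal{A}$ along a one-parameter family of curves with fixed endpoints and show that its vanishing is equivalent to Herglotz's equations. Concretely, fix a smooth variation $\{\gamma_s\}_{s\in(-\epsilon,\epsilon)}$ of $\gamma$ in $\Omega(q_0,q_1)$ with $\gamma_0=\gamma$, set $\delta q^i(t)=\partial_s\gamma_s^i(t)|_{s=0}$ (which vanishes at $t=a,b$), and let $\zeta_s=\mathcal{Z}(\gamma_s)$. Since each $\zeta_s$ solves \eqref{contact_var_ode}, differentiating the ODE in $s$ at $s=0$ and writing $\psi(t)=\partial_s\zeta_s(t)|_{s=0}$ gives a linear first-order ODE
\begin{equation*}
\dot\psi(t)-\frac{\partial L}{\partial z}\,\psi(t)=\frac{\partial L}{\partial q^i}\,\delta q^i(t)+\frac{\partial L}{\partial v^i}\,\delta\dot q^i(t),\qquad \psi(a)=0,
\end{equation*}
where all partials of $L$ are evaluated along $(\gamma(t),\dot\gamma(t),\mathcal{Z}(\gamma)(t))$. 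By construction, $\frac{\dd}{\dd s}\mathcal{A}(\gamma_s)|_{s=0}=\psi(b)$, so criticality of $\gamma$ is equivalent to $\psi(b)=0$ for every admissible $\delta q$.

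Next I would integrate the ODE for $\psi$ explicitly via the integrating factor
\begin{equation*}
\sigma(t)=\exp\!\left(-\int_a^t \frac{\partial L}{\partial z}\bigl(\gamma(r),\dot\gamma(r),\mathcal{Z}(\gamma)(r)\bigr)\,\dd r\right),
\end{equation*}
which is strictly positive. Multiplying the ODE by $\sigma$ turns its left-hand side into $(\sigma\psi)'$, so integrating from $a$ to $b$ and using $\psi(a)=0$ yields
\begin{equation*}
\sigma(b)\,\psi(b)=\int_a^b \sigma(t)\left[\frac{\partial L}{\partial q^i}\,\delta q^i+\frac{\partial L}{\partial v^i}\,\delta\dot q^i\right]\dd t.
\end{equation*}
Then I would integrate the second summand by parts, using the boundary condition $\delta q^i(a)=\delta q^i(b)=0$, and observe that $\dot\sigma=-\sigma\,\partial L/\partial z$. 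This produces
\begin{equation*}
\sigma(b)\,\psi(b)=\int_a^b \sigma(t)\left[\frac{\partial L}{\partial q^i}+\frac{\partial L}{\partial v^i}\frac{\partial L}{\partial z}-\frac{\dd}{\dd t}\frac{\partial L}{\partial v^i}\right]\delta q^i\,\dd t.
\end{equation*}

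Finally, since $\sigma(b)>0$, the identity $\psi(b)=0$ for all variations $\delta q^i$ vanishing at the endpoints is, by the fundamental lemma of the calculus of variations, equivalent to the vanishing of the bracketed expression for every $i$ and every $t\in[a,b]$, which is precisely the Herglotz equation \eqref{eq:Herglotz}. Conversely, if $(\gamma,\dot\gamma,\mathcal{Z}(\gamma))$ satisfies \eqref{eq:Herglotz}, the same identity shows $\psi(b)=0$ for every admissible variation, so $\gamma$ is critical.

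The step I expect to be the main obstacle is the bookkeeping of the integrating factor together with the boundary contribution: one must verify that the factor $\sigma$ introduced to solve the linearized ODE cancels cleanly after the integration by parts, so that the resulting Euler--Lagrange type expression contains exactly the extra term $\tfrac{\partial L}{\partial v^i}\tfrac{\partial L}{\partial z}$ characteristic of Herglotz's equations and nothing else. Once this cancellation is verified, the arbitrariness of $\delta q^i$ concludes both directions.
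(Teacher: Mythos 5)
Your argument is correct, and every step checks out: the linearized ODE for $\psi$ is right, the integrating factor $\sigma$ converts it into an exact derivative, the integration by parts produces exactly the extra term $\frac{\partial L}{\partial v^i}\frac{\partial L}{\partial z}$ via $\dot\sigma=-\sigma\,\partial L/\partial z$, and positivity of $\sigma$ lets the fundamental lemma deliver the equivalence in both directions. However, this is not the route the paper takes. The paper states the theorem in Section 2 without proof (deferring the direct variational argument to the cited reference) and instead supplies, in Section \ref{Hvp-ocp}, a proof based on the Pontryagin Maximum Principle: the Herglotz problem is recast as an optimal control problem with the velocities as controls, the extended presymplectic system on $T^*(\Real\times Q\times\Real)\times TQ$ is set up, the compatibility constraints $(p_o+p_z)\frac{\partial F}{\partial v^i}+p_i=0$ are differentiated along the dynamics, and a separate lemma (solving the linear ODE for $p_z$) shows that the factor $p_o+p_z$ is non-vanishing so that it can be cancelled — that factor is precisely the costate counterpart of your integrating factor $\sigma$. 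The trade-off: your first-variation computation is elementary, self-contained, and genuinely establishes the stated \emph{if and only if} characterization of critical points, whereas the paper's PMP derivation only yields the equations as a necessary condition for optimality but embeds the Herglotz principle into the optimal-control and contact-geometric framework that is the point of the article. The only things left implicit in your write-up are routine: smooth dependence of $\mathcal{Z}(\gamma_s)$ on the variation parameter (standard ODE theory) and the continuity needed for the fundamental lemma; neither is a gap.
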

These Herglotz equations, called also \emph{generalized Euler--Lagrange equations}, are
$$
 \frac{\d}{\d t}\left(\frac{\partial L}{\partial v^i}\right)_\gamma-\frac{\partial L}{\partial q^i}
-\frac{\partial L}{\partial z}\frac{\partial L}{\partial v^i}=0\, .
 $$
 Observe that they are not linear on the Lagrangian.
 
In Section \ref{Hvp-ocp} we provide a new proof of this last statement based on the Pontryagin Maximum Principle.

\section{A quick survey on optimal control and Pontryagin
Maximum Principle}
Roughly speaking, for our interest the Pontryagin Maximum Principle, PMP, transforms an optimal control problem into a presymplectic one. The method is to mimic the lifting of a vector field, $X\in\mathfrak{X}(M)$, to the cotangent bundle, $T^*M$, using the Hamiltonian function associated to the natural operation, by duality, of the vector field $X$ on the cotangent bundle. This is done for a control depending vector field but in the particular case where the original manifold $M$ is the product $M=\Real\times M_o$ where $M_o$ is a manifold. 

This Section tries to introduce what is an optimal control problem and how works the Pontryagin Maximum Principle with the adequate approach  for our interest. For a clearest exposition,  we suppose that all the manifolds and mappings are of $\mathcal{C}^{\infty}$-class.

Since the original result and proof of Pontryagin and collaborators, \cite{Pon-1962}, there are numerous expositions with applications and proofs on the Pontryagin principle; in this review we follow \cite{BaMu-2008} for notations and statements. There a detailed proof is given and a extensive bibliography is included.

\subsection{The optimal control problem}

\subsubsection{Statement of the problem}

Consider the following diagram:

$$
\xymatrix{&TM_o\ar[d]^{\tau_o}\\{M_o\times U}\ar[ur]^{X_o}\ar[r]^{\quad\pi_1}&M_o}
$$

whith the following elements:
\begin{enumerate}
\item $M_o$ is a differentiable manifold, $\dim M_o=m_o$, the \textit{state space} for the vector field $X_o$. The points in $M_o$ will be denoted by $x$ and, when necessary, the coordinates in $M_0$ will be denoted by $(x^i)$.
\item $U\subset\Real^k$ is called the \textit{control} set. Its elements are denoted by $u$, the controls, and we denote by $(u^a)$ its local coordinates, that is $u=(u^1,\ldots,u^k)$.
\item $X_o$ is a \textit{vector field along the projection} $M_o\times U\to M_o$. Given $u\in U$ we denote by $X_o^u=X_o(\, .\,,u)\in\mathfrak{X}(M_o)$. It gives the dynamics of the problem.
\end{enumerate}

Suppose that we have given a function $F:M_o\times U\to\Real$, an interval $I=[a,b]\subset\Real$ and $x_a,x_b\in M_o$. With all this elements $(M_o,U,X_o,F,I,x_a,x_b)$ we have the following 

\bigskip
\noindent\textbf{Optimal control problem}, OCP: Find curves $\gamma:I\to M_o\times U$, $\gamma=(\gamma_o,\gamma_U)$, such that
\begin{enumerate}
  \item[1)] end points conditions: $\gamma_o(a)=x_a, \gamma_o(b)=x_b$,
  \item[2)] $\gamma$ is an integral curve of $X_o$: $\dot{\gamma}_o=X_o\circ\gamma$\, , and
  \item[3)] minimal condition: $S[\gamma]=\int_a^b F(\gamma(t))\d\,t$ is minimum over all curves satisfying 1) and 2). 
  \end{enumerate}

The function $F$ is called the \textit{cost function} of the problem.

\medskip
In local coordinates, if $X =X^i\frac{\partial}{\partial x^i}$, then the differential equation for the curve $\gamma$ are
$$
\dot{x}^i =X^i(x^j,u)\, .
$$
The minimal condition allows to obtain the solution for the controls $u=u(t)$. Introducing them in the differential equation and integrating them we have the curves solution of the optimal control problem.  
\subsubsection{The extended optimal control problem}

To solve the above problem it is necessary to incorporate into the vector field the cost function as a direction in the tangent bundle of the state space. This is made by the construction of the so called \textit{extended problem}.

Associated with the previous elements, consider the diagram:

$$
\xymatrix{&TM=T\Real\times TM_o\ar[d]^{\tau}\\{M\times U=\Real\times M_o\times U}\ar[ur]^{X_o}\ar[r]^{\qquad\pi_1}&M=\Real\times M_o}
$$

\noindent where the points in $M=\Real\times M_o$ are denoted by $(x^o,x)$, and the vector field $X$ along the projection $\pi_1$ is
$$
X=F\frac{\partial}{\partial x^o}+X_o\, .
$$

\begin{remark}
 Observe that  $[\partial/\partial x^o,X]=0$, hence we are in a  situation where the direction associated to $x^o$ is specifically identified. In particular this implies that the vector field $X$ is projectable to $M_o$. This situation is going to be used in other parts of this and other sections.
\end{remark}

From the original elements we have at the beginning, $(M_o,U,X_o,F,I,x_a,x_b)$, we now have $(M,U,X,I,x_a,x_b)$ and we consider the following problem:

\bigskip
\noindent\textbf{Extended optimal control problem}, EOCP: 
Find curves $\hat\gamma:I\to \Real\times M_o\times U$, $\hat\gamma=(\gamma^o, \gamma_o,\gamma_U)$, such that
\vspace{-2mm}\begin{enumerate}
  \item[1)] end points conditions: $\gamma_o(a)=x_a, \gamma_o(b)=x_b, \gamma^o(a)=0$,
  \item[2)] $\hat\gamma$ is an integral curve of $X$: $\dot{\overline{(\gamma^o,\gamma_o)}}=X\circ\hat\gamma$\, , and
  \item[3)] maximal condition: 
  $x^o(b)$ is maximal over all curves satisfying 1) and 2). 
  \end{enumerate}
  
  Remember that $F$ is the cost function of the original optimal control problem.
  
  This extended optimal control problem is equivalent to the initial optimal control problem as defined above, that is there is a bijection between the set of solutions $\gamma$ of the first problem and the set of solution $\hat\gamma$ of the second one corresponding to the variables $x^1,\ldots,x^{m_o}$. The variable $x^o$ is not relevant to the problem, it is and additional variable used to identify the direction with maximal increment in the tangent bundle to $M$ and to prove the Pontryagin Maximum principle.

In the sequel we only consider this form of the optimal control problem
 and we always refer to this statement as optimal control problem. We denote it by $(M,U,X,I,x_a,x_b)$.

\subsection{The Pontryagin Maximum Principle}

As we have said above, the solution to this problem was obtained by Pontryagin and collaborators in 1954. For a modern proof and applications, see \cite{BaMu-2008} and references therein.

Given the above optimal control problem $(M,U,X,I,x_a,x_b)$, for any $u\in U$, we consider the symplectic problem given by
\begin{enumerate}
  \item Manifold: $T^*M$.
  \item Symplectic form $\omega_M$, the 2-canonical form of $T^*M$.
  \item Hamiltonian function: $H^u=\hat X^u=p_oF^u+p_i(X^u)^i$.
\end{enumerate}

Where we have denoted by $X^u$ the vector field $X(\, .\, , u)$, and similarly with the other elements. The Hamiltonian function is the natural one associated to the vector field $X^u$ on the cotangent bundle $T^*M$.
We call this problem $(T^*M,\omega_M,H^u)$. It is Hamiltonian symplectic system.

As we know, the associated Hamiltonian vector field, $X_H^u$, defined by $\mathbf{i}(X_H^u)\omega_M=\d\,H^u$, is locally given by
\begin{equation} X_H^u=F^u\frac{\partial}{\partial x^o}+(X^u)^i\frac{\partial}{\partial x^i}-
\left(\lambda_o\frac{\partial F^u}{\partial x^i}+
p_j\frac{\partial (X^u)^j}{\partial x^i}\right)\frac{\partial}{\partial p_i}\, .
\end{equation} 

All this no more than the canonical lifting of a vector field $X$ on a manifold $M$ to its cotangent bundle $T^*M$ and denoted usually by $X^*$, in this particular case $(X^u)^*$. We will go on this ideas on the following section with more detail and other points of view.

With this in mind we have: (see \cite{BaMu-2008} for a detailed proof)

\begin{teor}: \textbf {Pontryagin Maximum Principle}\label{pmp}

Given the optimal control problem $(M,U,X,I,x_a,x_b)$, let 
$\hat\gamma:I\to \Real\times M_o\times U$ be a solution, $\hat\gamma=(\gamma_{M},\gamma_U)$,  then there exists $\hat\sigma:I\to T^*M\times U=T^*\Real\times T^*M_o\times U$, $\hat\sigma=(\sigma_{T^*M},\sigma_U)$ such that
\begin{enumerate}
  \item[1)] it is a solution to the Hamiltonian problem $(T^*M\times U,\omega,H^u)$, that is it is an integral curve of $X_H^u$, for some fixed $u\in U$,
  \item[2)] $\hat\gamma=\pi\circ\hat\sigma$, where $\pi:T^*M\times U\to M\times U$ is the natural projection, and $\hat\gamma$ satisfies the end points condition; hence $\sigma_U=\gamma_U$, 
  \item[3)] $H(\sigma_{T^*M}(t), \gamma_U(t))=\mathrm{sup}_{v(t)\in U}H(\sigma_{T^*M}(t), v(t))$ for every $t\in I$.
\end{enumerate}	
\end{teor}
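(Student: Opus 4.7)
The plan is to prove the PMP by the classical needle-variation and separating-hyperplane argument, phrased in a way that naturally produces the Hamiltonian lift $\hat\sigma$ on $T^*M$. First, I would fix the candidate optimal solution $\hat\gamma=(\gamma_M,\gamma_U)$ and, for each Lebesgue point $\tau\in I$ of $\gamma_U$ and each admissible value $v\in U$, build a \emph{needle variation} of the control: replace $\gamma_U$ by $v$ on a small interval $[\tau-\epsilon,\tau]$ and leave it unchanged elsewhere. Integrating the perturbed vector field $X$ from $x_a$ produces a perturbed trajectory in $M=\Real\times M_o$, whose value at $t=b$ depends smoothly on $\epsilon\geq 0$. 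Expanding to first order in $\epsilon$ and propagating the resulting infinitesimal displacement forward through the linearized (variational) equation of $X^{\gamma_U}$ along $\gamma_M$ gives an explicit tangent vector
\[
w_{\tau,v}=\Phi_{b,\tau}\bigl(X^{v}(\gamma_M(\tau))-X^{\gamma_U(\tau)}(\gamma_M(\tau))\bigr)\in T_{\gamma_M(b)}M,
\]
where $\Phi_{b,\tau}$ denotes the flow derivative of $X^{\gamma_U}$ between times $\tau$ and $b$. The collection of such first-order displacements, together with their positive combinations, generates a convex cone $K\subseteq T_{\gamma_M(b)}M$, the Pontryagin cone at the endpoint.

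The second step is the key geometric observation. Since $\hat\gamma$ maximises $x^o(b)$, no admissible perturbation can strictly increase the $x^o$-component of the endpoint while keeping the $M_o$-components fixed at $x_b$. Translated to tangent vectors, the open half-line $\Real_{>0}\,\partial/\partial x^o$ cannot lie in the interior of $K$ modulo the directions tangent to the fibre $\{x^o\,\text{free}\}\cap\{x=x_b\}$. This is the separation hypothesis that triggers a Hahn–Banach style argument: there exists a nonzero covector $p(b)\in T_{\gamma_M(b)}^*M$ with $p_o(b)\le 0$, with the endpoint-constraint components $p_i(b)$ free, and such that $\langle p(b),w\rangle\le 0$ for every $w\in K$.

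Third, I would transport this covector backwards to all of $I$ by defining $p(t)$ along $\gamma_M$ as the unique solution of the adjoint linear equation
\[
\dot{p}_i(t)=-\,p_o\frac{\partial F}{\partial x^i}(\gamma_M(t),\gamma_U(t))-p_j(t)\frac{\partial X^{j}}{\partial x^i}(\gamma_M(t),\gamma_U(t)),\qquad \dot p_o=0,
\]
with terminal value $p(b)$. A direct check shows that this is precisely Hamilton's equation for the symplectic lift $X_H^{u}$ of $X^{u}$ associated to the natural linear Hamiltonian $H^{u}=p_oF^{u}+p_i(X^{u})^i$ on $T^*M$. Therefore $\hat\sigma(t):=((\gamma_M(t),p(t)),\gamma_U(t))$ is, by construction, an integral curve of $X_H^{\gamma_U(t)}$ projecting onto $\hat\gamma$, which gives items 1) and 2) of the statement.

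Finally, for the maximum condition 3), I would plug the needle variation at an arbitrary Lebesgue time $\tau$ into the separation inequality $\langle p(b),w_{\tau,v}\rangle\le 0$. Using the fact that $\Phi_{b,\tau}^*p(b)=p(\tau)$, this inequality becomes $\langle p(\tau),X^{v}(\gamma_M(\tau))\rangle\le\langle p(\tau),X^{\gamma_U(\tau)}(\gamma_M(\tau))\rangle$, i.e.\ $H^{v}(\hat\sigma(\tau))\le H^{\gamma_U(\tau)}(\hat\sigma(\tau))$ for all $v\in U$, which is exactly the desired pointwise maximisation of $H$ along the lifted trajectory. The step I expect to be by far the most delicate is the construction and convexity of the cone $K$ together with the verification that $\partial/\partial x^o$ is not an interior direction: one must combine multiple needle variations at different times, control the remainder terms in the variational expansion uniformly, and handle the fact that $\gamma_U$ is only measurable, which is where the analytic machinery of the classical proof (see \cite{Pon-1962,BaMu-2008}) is really needed. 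Once the separation step is secured, the passage to the Hamiltonian lift and the maximum principle is essentially a clean rewriting of the linear adjoint equation on $T^*M$.
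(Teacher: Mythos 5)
First, a point of comparison: the paper does not actually prove Theorem \ref{pmp}. It states it as a known result and defers to \cite{BaMu-2008} (and ultimately \cite{Pon-1962}) for a detailed proof, so there is no internal argument to measure yours against. What you have written is a faithful summary of precisely the classical proof that those references carry out: needle variations, the Pontryagin cone $K$ at $\gamma_M(b)$, a separating covector $p(b)$, backward transport by the adjoint equation (which is indeed the coordinate expression of $X_H^u$ for the linear Hamiltonian $H^u=p_oF^u+p_i(X^u)^i$), and the maximum condition read off from $\langle p(\tau),X^{v}-X^{\gamma_U(\tau)}\rangle\le 0$ using $\Phi_{b,\tau}^*p(b)=p(\tau)$. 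The architecture is the right one and the passage from the adjoint system to the Hamiltonian lift on $T^*M$ is correctly identified.

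As a proof, however, it has a genuine gap exactly where you flag it, and that gap \emph{is} the theorem. The separation step requires: (i) that $K$ is a convex cone, which forces you to combine needle variations at several distinct times and to show that the endpoint map is differentiable in the packet of needle widths with the claimed derivative --- for merely measurable controls this is where the Lebesgue-point analysis and the uniform remainder estimates live; (ii) a topological (Brouwer-type) lemma showing that if the $x^o$-ray were interior to $K$ relative to the endpoint constraint $\{x=x_b\}$ one could produce an actual admissible trajectory beating $\hat\gamma$, not merely a first-order displacement; and (iii) the nontriviality $p(b)\not=0$ of the separating covector, without which conditions 1)--3) are vacuous. None of these is routine. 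Two smaller points. Your sign $p_o(b)\le 0$ is the convention for minimizing $x^o(b)$, whereas the extended problem in the paper maximizes $x^o(b)$; the separation should then give $\langle p(b),\partial/\partial x^o\rangle\ge 0$, and the choice must be kept consistent with condition 3) being a supremum rather than an infimum (the discrepancy is only a global sign of $p$, but it has to be fixed once and propagated). And you should state explicitly that $\dot p_o=0$ follows because $H^u$ does not depend on $x^o$ (the symmetry $[\partial/\partial x^o,X]=0$ built into the extended system), which is what makes $p_o$ a constant multiplier rather than an extra hypothesis.
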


This Theorem gives a necessary condition the solutions must fulfill. The way it is applied is as follows: condition 3) allows to obtain the solution for $u(t)$ and with this solution we can integrate the Hamiltonian vector field $X_H^u$, obtaining the curves $\hat\sigma(t)$ and hence $\hat\gamma(t)$ and the initially desired solution $\gamma_o(t)$.

The differential equations defining the integral curves of $X_H^u$ are the following:

\begin{equation}\label{pmp-eqs}
\begin{matrix}
\dot x^o=\frac{\partial H^u}{\partial p_o}=F\,\, , & \dot p_o=
\frac{\partial H^u}{\partial x^o}=0,\,\,(\Rightarrow p_o=ct) 
\\
& &
\\
\dot x^i=\frac{\partial H^u}{\partial p_i}=X^i\,\, , &  
\dot p^i=-\frac{\partial H^u}{\partial x^i}=-p_o\frac{\partial F}{\partial x^i}-p_j\frac{\partial X^j}{\partial x^i}
\end{matrix}
\end{equation}

\bigskip
As we are assuming that all the elements of the problem are of $\mathcal{C}^{\infty}$-class,  and we suppose furthermore that $U\subset\Real^k$ is an open set, then  condition 3) in the Theorem can be changed to

\bigskip
3') $\frac{\partial H}{\partial u}|_{\hat\sigma(t)}=0$ for every $u\in U$.

\medskip
Hence in order to obtain the solution $\gamma_U$, if possible, we have this last expression as other equations to add to (\ref{pmp-eqs}).  If $(u^1,\ldots,u^k)$ is a basis for $\Real^k$,  we have the equations

\begin{equation}\label{pmp-controls}
\frac{\partial H}{\partial u^1}=0,\ldots,\frac{\partial H}{\partial u^k}=0
\end{equation} 
together with equations (\ref{pmp-eqs}) to solve the optimal control problem.

In the sequel we will assume that $U$ is an open subset of $\Real^k$.

Then instead of Theorem \ref{pmp}, we have the following

\begin{teor}: \textbf {Weak Pontryagin Maximum Principle}\label{weak-pmp}

Given the optimal control problem $(M,U,X,I,x_a,x_b)$, with $U\subset\Real^k$ an open set, let 
$\hat\gamma:I\to \Real\times M_o\times U$ be a solution, $\hat\gamma=(\gamma_{M},\gamma_U)$,  then there exists $\hat\sigma:I\to T^*M\times U=T^*\Real\times T^*M_o\times U$, $\hat\sigma=(\sigma_{T^*M},\sigma_U)$ such that
\begin{enumerate}
  \item[1)] it is a solution to the Hamiltonian problem $(T^*M\times U,\omega,H^u)$, that is, it is an integral curve of $X_H^u$, for any fixed $u\in U$,
  \item[2)] $\hat\gamma=\pi\circ\hat\sigma$, where $\pi:T^*M\times U\to M\times U$ is the natural projection, and $\hat\gamma$ satisfies the end points condition; hence $\sigma_U=\gamma_U$, 
  \item[3)] minimality conditions: $\frac{\partial H}{\partial u}|_{\hat\sigma(t)}=0$ for every $u\in U$ and for every $t\in I$.
\end{enumerate}	
\end{teor}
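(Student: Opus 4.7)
The plan is to derive Theorem \ref{weak-pmp} as an essentially immediate corollary of Theorem \ref{pmp}. I would apply Theorem \ref{pmp} to the given solution $\hat\gamma$ of the optimal control problem $(M,U,X,I,x_a,x_b)$; this yields a lift $\hat\sigma:I\to T^*M\times U$ satisfying conditions 1) and 2), and these two conditions are stated identically in the Weak PMP, so nothing further is required for them. The only real task is to upgrade the supremum condition 3) of Theorem \ref{pmp} to the first-order vanishing condition 3) of Theorem \ref{weak-pmp} under the extra assumption that $U$ is an open subset of $\Real^k$.

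For this upgrade, I would use the smoothness hypothesis on all data together with the openness of $U$. Fix $t\in I$ and set $\sigma_*:=\sigma_{T^*M}(t)$. The map $h_t:U\to\Real$ defined by $h_t(v)=H(\sigma_*,v)$ is of class $\mathcal{C}^\infty$ on the open set $U$, and by condition 3) of Theorem \ref{pmp} it attains its supremum at the point $v=\gamma_U(t)$. Since $\gamma_U(t)\in U$ by construction of the curve $\hat\gamma$ (which is a curve in $\Real\times M_o\times U$), this supremum is attained at an interior point of the domain of $h_t$. The standard interior maximum principle for smooth real-valued functions on an open subset of $\Real^k$ then forces $(\partial h_t/\partial u^a)(\gamma_U(t))=0$ for $a=1,\dots,k$, which is precisely condition 3) of Theorem \ref{weak-pmp}. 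Letting $t$ range over $I$ produces the claimed pointwise statement along the whole curve $\hat\sigma$.

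There is essentially no serious obstacle: the Weak PMP is nothing more than the smooth, open-control-set specialization of the full PMP, and the argument reduces to a one-line first-order optimality observation. The only subtlety worth making explicit is that one does not need to re-examine the construction of the adjoint curve $\hat\sigma$ from \cite{BaMu-2008}; the existence and properties encoded in 1) and 2) are inherited verbatim from Theorem \ref{pmp}, while 3) is obtained by replacing the combinatorial supremum characterization by its differential counterpart in the smooth interior case, completing the proof.
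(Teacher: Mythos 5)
Your proposal is correct and matches the paper's own treatment: the paper does not give a separate formal proof of Theorem~\ref{weak-pmp}, but derives it exactly as you do, by invoking Theorem~\ref{pmp} for conditions 1) and 2) and then remarking that, since all data are $\mathcal{C}^\infty$ and $U\subset\Real^k$ is open, the supremum in condition 3) is attained at an interior point and may therefore be replaced by the stationarity conditions $\partial H/\partial u^a=0$ of equation~(\ref{pmp-controls}). Your explicit appeal to the interior maximum principle is precisely the one-line observation the paper leaves implicit.
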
  

\subsection{The presymplectic approach to PMP}

Now we try to give another approach to the Pontryagin Maximum Principle more adequate for our problems. It is stated as a presymplectic problem  and goes as follows.

Consider the problem given by $(M,U,X,I,x_a,x_b)$ and the solution by means of the symplectic system $(T^*M,\omega_M,H^u)$ with equations (\ref{pmp-eqs}) and (\ref{pmp-controls}). Take the projection
$$
\pi_1:T^*M\times U\to T^*M
$$

\noindent and the 2-form $\omega=\pi_1^*\,\omega_M\in\Omega^2(T^*M\times U)$. It is a presymplectic form and its kernel is given by
$$
\ker\omega=\left\{\frac{\partial }{\partial u^1},\ldots,\frac{\partial }{\partial u^k}\right\}\, .
$$

We can consider the presymplectic system $(T^*M\times U,\omega, H)$ whose dynamical equation is given by 
$$\mathbf{i}(X_H)\,\omega=\d\, H\, .$$

Being a presymplectic system, the compatibility equations are given by $\mathbf{i}(Z)\d\,H=0$ for every $Z\in\ker\omega$, that is equations (\ref{pmp-controls}).

Changing Theorem \ref{weak-pmp} to this new situation we have

\begin{teor}: \textbf {Presymplectic Pontryagin Maximum Principle}\label{presym-pmp}

Given the optimal control problem $(M,U,X,I,x_a,x_b)$, with $U\subset\Real^k$ an open set, let 
$\hat\gamma:I\to M\times U=\Real\times M_o\times U$ be a solution, 
$\hat\gamma=(\gamma_M,\gamma_U)$,  then there exists $\hat\sigma:I\to T^*M\times U=T^*\Real\times T^*M_o\times U$, $\hat\sigma=(\sigma_{T^*M},\sigma_U)$ such that
\begin{enumerate}
  \item[1)] it is a solution to the Hamiltonian presymplectic problem $(T^*M\times U,\omega,H)$, that is it is an integral curve of $X_H$, solution to the equation $\mathbf{i}(X_H)\,\omega=\d\, H$,
  \item[2)] $\hat\gamma=\pi\circ\hat\sigma$, where $\pi:T^*M\times U\to M\times U$ is the natural projection, and $\hat\gamma$ satisfies the end points condition; hence $\sigma_U=\gamma_U$, 
  \item[3)] minimality, compatibility, conditions: $\frac{\partial H}{\partial u}|_{\hat\sigma(t)}=0$ for every $u\in U$ and for every $t\in I$.
\end{enumerate}	
\end{teor}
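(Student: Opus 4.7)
The plan is to reduce this statement directly to the Weak Pontryagin Maximum Principle (Theorem \ref{weak-pmp}) by showing that the presymplectic system $(T^*M\times U,\omega,H)$ and the parametric symplectic family $(T^*M,\omega_M,H^u)$ produce exactly the same integral curves once the compatibility conditions coming from $\ker\omega$ are imposed. Everything is essentially a translation between the two frameworks.

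First, I would confirm the local structure of $\omega$. Since $\omega=\pi_1^*\omega_M$ and $\pi_1:T^*M\times U\to T^*M$ is the Cartesian projection, in natural bundle coordinates $(x^o,x^i,p_o,p_i,u^a)$ one has $\omega=\dd x^o\wedge \dd p_o+\dd x^i\wedge \dd p_i$, so $\ker\omega=\langle\partial/\partial u^1,\ldots,\partial/\partial u^k\rangle$ as already noted in the excerpt. Next I would run the first step of the presymplectic constraint algorithm on $\iota(X_H)\omega=\dd H$: the obstruction is $\iota(Z)\dd H=0$ for every $Z\in\ker\omega$, which is precisely the system $\partial H/\partial u^a=0$. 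This carves out the submanifold $P\subset T^*M\times U$ that coincides exactly with the locus where condition 3') of Theorem \ref{weak-pmp} holds. Assuming (for the regular case) that $P$ is a submanifold and that the algorithm stabilises at this step, a Hamiltonian vector field $X_H$ exists along $P$.

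Then I would decompose $X_H=Y+\sum_a\mu^a\partial/\partial u^a$ with $Y$ having no $\partial/\partial u^a$ component. Since $\omega$ contains no $\dd u^a$, the equation $\iota(X_H)\omega=\dd H$ reads $\iota(Y)\omega_M=\dd H|_{T^*M\text{-directions}}$, which is literally the symplectic Hamilton equation for $H^u=H(\cdot,u)$ on $T^*M$ with $u$ treated as a parameter. Thus, up to an arbitrary choice of $\mu^a$ (i.e.\ an arbitrary control law $t\mapsto u(t)$ compatible with $P$), the projection of any integral curve of $X_H$ to $T^*M$ is an integral curve of $X_{H^u}$ for the time-dependent value $u(t)$, and equations (\ref{pmp-eqs}) hold along it.

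To finish, given a solution $\hat\gamma=(\gamma_M,\gamma_U)$ of the OCP, I invoke Theorem \ref{weak-pmp} to obtain $\hat\sigma=(\sigma_{T^*M},\gamma_U)$ satisfying (\ref{pmp-eqs}) and (\ref{pmp-controls}) along $I$. The conditions (\ref{pmp-controls}) say exactly $\hat\sigma(t)\in P$ for all $t$, while (\ref{pmp-eqs}) say that the $T^*M$-component of $\dot{\hat\sigma}$ agrees with $Y$ above; choosing $\mu^a(t)=\dot\gamma_U^a(t)$ yields a representative $X_H$ (along the image of $\hat\sigma$) of which $\hat\sigma$ is an integral curve. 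Endpoint conditions and the projection identity $\hat\gamma=\pi\circ\hat\sigma$ are inherited from Theorem \ref{weak-pmp}. Conversely, any integral curve of $X_H$ supported on $P$ automatically satisfies the Weak PMP conditions by the same decomposition.

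The main obstacle I expect is conceptual rather than computational: clarifying that $X_H$ is not uniquely defined by $\iota(X_H)\omega=\dd H$ (the $\mu^a$ are free) and that this non-uniqueness is precisely the mechanism by which the control law $t\mapsto\gamma_U(t)$ singled out by the maximum condition becomes part of the presymplectic trajectory. One also has to note that tangency of $X_H$ to $P$ along $\hat\sigma$ is automatic here because $\hat\sigma$ satisfies the compatibility equations for every $t\in I$, so no further stages of the constraint algorithm are triggered in the regular case considered in the statement.
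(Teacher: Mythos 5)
Your proposal is correct and follows essentially the same route as the paper, which justifies Theorem \ref{presym-pmp} simply by observing that the compatibility conditions $\iota(Z)\dd H=0$ for $Z\in\ker\omega$ reproduce equations (\ref{pmp-controls}) and that the remaining local equations coincide with (\ref{pmp-eqs}), so the statement reduces to Theorem \ref{weak-pmp}. Your explicit decomposition $X_H=Y+\mu^a\partial/\partial u^a$ and the choice $\mu^a(t)=\dot\gamma_U^a(t)$ merely make precise the non-uniqueness that the paper leaves implicit.
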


A solution to the equation $\mathbf{i}(X_H)\,\omega=\d\, H$ is given by:
\begin{equation} X_H=F\frac{\partial}{\partial x^o}+X^i\frac{\partial}{\partial x^i}-
\left(\lambda_o\frac{\partial F}{\partial x^i}+
p_j\frac{\partial X^j}{\partial x^i}\right)\frac{\partial}{\partial p_i}\, .
\end{equation}

Observe that this solution exists all over the manifold
$T^*M\times U$ and that $p_o$ is constant for every curve solution to the problem.

Suppose that the compatibility equations allow us to determine the controls $u^1,\ldots,u^k$, that is we can obtain $u^a=	\psi(x^o,x^i,p_o,p_i)$, then we say that the optimal control problem is \textbf{regular}, otherwise it is called \textbf{singular}. In the singular case, it is necessary to apply an algorithm of constraints, that is to go to higher order conditions, to obtain the controls perhaps on a submanifold of $T^*M\times U$. See \cite{BaMu-2008,BaMu-2012} for details on these ideas and \cite{got79} for the used algorithm.

Note that the weak and the presymplectic approaches to the maximum principle are equivalent since the local equations are the same.

\begin{remark}
 Along this appendix and for simplicity in the exposition, we have considered that the set of controls $U$ is an open set in an Euclidean space, hence we have the product $M\times U$. We can change this situation by a non trivial bundle $C\to M$, instead of the natural projection $M\times U\to M$, considering the controls as the elements of the fibres. The local equations are the same that we have obtained in the trivial case for the controls. 
\end{remark}

\section{Dynamics of vector fields as contact dynamics}\label{vf-codyn}

It is well known that the integral curves of a vector field in a manifold $M$ can be obtained as projection of integral curves of a Hamiltonian vector field in the cotangent bundle. We can extend this dynamics to the contact associated manifold $TM\times\Real$, as in equation (\ref{tqxR,eta}), what gives the additional equation $\dot z=0$, that is in a trivial way. We want to obtain a non trivial extension.

In this section we study how to obtain these integral curves as solutions of a contact dynamical system in an adequate contact manifold, at least in the case that the original vector field has some symmetry properties. Here we recover a similar situation we had in the Pontryagin Maximum Principle in its symplectic approach. See Section 3.

\subsection{The general case}
Let $M$ be a manifold and $X\in \mathfrak{X}(M)$ a vector field. Let $\hat X:T^*M\to\Real$ the natural function defined by $\hat X(\alpha)=\alpha(X)=<\alpha,X>$. In a canonical coordinate system $(x^i,p_i)$ in $T^*M$, we have that $\hat X(x,p)=p_iX^i$.

As it is well known, if $\omega_M=-\d\,\theta_M$ is the  symplectic canonical 2-form in $T^*M$, we can consider the Hamiltonian symplectic system $(T^*M,\omega_M,\hat X)$. Then the Hamiltonian vector field $Y_{\hat X}\in\mathfrak{X}(T^*M)$, defined by $\mathbf i(Y_{\hat X})\omega_M=\d\,\hat{X}$, has local expression
$$
X=X^i\frac{\partial}{\partial x^i},\,\,\,\Rightarrow\,\,\,   Y_{\hat X}=X^i\frac{\partial}{\partial x^i}-p_j\frac{\partial X^j}{\partial x^i}\frac{\partial}{\partial p_i}
$$  
if $(x^i)$ and $(x^i,p_i)$ are coordinates of $M$ and $T^*M$ respectively. By this local expression we have that $Y_{\hat X}=X^*$, where $X^*$ is the so called \textit{canonical lifting} of $X\in\vf(M)$ to $T^*M$. The integral curves of $Y_{\hat X}$ projected to $M$ are the integral curves of $X$ as we can see by direct observation of the above local expression. With this method, we have transformed any vector field in a Hamiltonian one but doubling the dimension. For details about these constructions we refer to
\cite{deLeon-Rod-1989,YaIs-1973}.

Observe that the Hamiltonian $\hat X$ depends linearly on the momenta.

\subsection{The case 
 \texorpdfstring{$M=\Real\times M_o$}{}}

\subsubsection{The symplectic case}\label{sympl-ext}

Suppose now that we have one direction specially identified in the tangent bundle to the manifold, that is  $M=\Real\times M_o$. When necessary we denote by $(x^o,x^i)$ a coordinate system in $M$ and $(x^o,x^i,p_o,p_i)$ its natural extension to $T^*M$.

Let $X\in \mathfrak{X}(M)$ and suppose that 
$$
\left[\frac{\partial}{\partial x^o},X\right]=0\,.
$$
In coordinates this means that, if $X=X^o\frac{\partial}{\partial x^o}+X^i\frac{\partial}{\partial x^i}$, then the coordinates $X^o$ and $X^i$ of the vector field $X$ do not depend on $x^o$. In particular this implies that $X$ is projectable to $M_o$.

\bigskip
\begin{remark}
 What is the meaning of this situation? Suppose we have two vector fields $X_o,X\in\vf(M)$ with $[X_o,X]=0$. Then around any regular point of $X_o$ we can choose a local coordinate system $(U,x^o,x^i)$, with $i=1,\ldots,n$, if $\dim M=1+n$, and $U\subset M$ an open set, with $X_o|_U=\partial/\partial x^o$. Hence we have the above situation but locally. In this case the local decomposition $\{x^o\}\times\{x
^i\}$ is not unique.

This is what we called above ``\textit{particular symmetry property}" for the vector field $X$. We can observe that it is a common situation at least locally.
\end{remark}

\bigskip

This is a situation we are going to tackle when trying to relate contact structures and optimal control. The variable $x^o$ will correspond to the cost function $F$ as we have seen in Section 3 in our review of the Pontryagin Maximum Principle.

If we proceed in this case as above in the general situation, with $\mathbf i(X^*)\omega_M=\d\, H$, where the Hamiltonian function $H$ is defined by
$$
H=\hat{X}= p_oX^o+p_iX^i
$$
then the corresponding Hamiltonian vector, using $
[\partial/\partial x^o,X]=0
$, is given by 
$$
X^*=X^o\frac{\partial}{\partial x^o}+X^i\frac{\partial}{\partial x^i}-0\frac{\partial}{\partial p_o}-\left(p_o\frac{\partial X^o}{\partial x^i}+
p_j\frac{\partial X^j}{\partial x^i}\right)\frac{\partial}{\partial p_i}\, .
$$

The associated system of differential equations is:
$$
\dot x^o=X^o,\,\,\dot p_o=0,\,\, \dot x^i=X^i,\,\, \dot p^i=-p_o\frac{\partial X^o}{\partial x^i}-p_j\frac{\partial X^j}{\partial x^i}
$$
This is the description of the Hamiltonian system $(T^*M,\omega_M,H)$ with $H=\hat{X}$.

\subsubsection{The relation with contact dynamics}\label{normalsolut}

Observe that the vector field $X^*$ is tangent to the submanifold defined by $p_o=\mathrm{constant}$, hence we can reduce the problem to those hypersurfaces of $T^*M$. We have two different situations and, by comparison with the situation of the optimal control and the symplectic Pontryagin Maximum Principle, we will call \textbf{normal} and \textbf{abnormal} situations.

\bigskip
\noindent\textbf{a) The normal situation: $p_o\neq 0$}

\medskip
For $\lambda_o\in\Real$, $\lambda_o\neq 0$, let $N\subset T^*M$ be the submanifold defined by $p_o=\lambda_o$ and let $j:N\hookrightarrow T^*M$ be the natural inclusion. Obviously the dimension of $N$ is odd, hence it can not be a symplectic manifold. We denote by $(x^o,x^i,p_i)$ the coordinates induced in $N$ by the coordinates we have in $T^*M$.

Consider now the canonical 1-form $\theta_M\in\Omega^1(T^*M)$ and let $
\eta=-j^*\theta_M$, then we have the following result

\begin{lem}$(N,\eta)$ is a contact manifold. The Reeb vector field is $R=-\frac{1}{\lambda_o}\frac{\partial}{\partial x^o}$.
	
\end{lem}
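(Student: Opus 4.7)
The plan is a direct coordinate computation, exploiting canonical bundle coordinates $(x^o,x^i,p_o,p_i)$ on $T^*M$ in which the Liouville 1-form reads $\theta_M = p_o\,dx^o + p_i\,dx^i$. Since $j:N\hookrightarrow T^*M$ is the inclusion of the level set $\{p_o=\lambda_o\}$, I can use $(x^o,x^i,p_i)$ as coordinates on $N$ and compute immediately
\[
\eta = -j^*\theta_M = -\lambda_o\,dx^o - p_i\,dx^i,\qquad d\eta = dx^i\wedge dp_i.
\]
Note $\dim N = 2m_o+1$ where $m_o=\dim M_o$, so the dimension is odd, as required for a contact structure.

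Next I verify the contact condition $\eta\wedge(d\eta)^{m_o}\neq 0$. A routine expansion gives
\[
(d\eta)^{m_o} = m_o!\,dx^1\wedge dp_1\wedge\cdots\wedge dx^{m_o}\wedge dp_{m_o}.
\]
Wedging with $\eta$, the term $-p_i\,dx^i$ is killed because each $dx^i$ already appears in $(d\eta)^{m_o}$, so only the term coming from $-\lambda_o\,dx^o$ survives, yielding
\[
\eta\wedge(d\eta)^{m_o} = -\lambda_o\,m_o!\,dx^o\wedge dx^1\wedge dp_1\wedge\cdots\wedge dx^{m_o}\wedge dp_{m_o},
\]
which is nowhere zero precisely because $\lambda_o\neq 0$. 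This is where the hypothesis of the lemma enters in an essential way; if $\lambda_o=0$ the top form vanishes and $\eta$ degenerates to something of class $2m_o-1$.

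Finally, for the Reeb field $R = -\tfrac{1}{\lambda_o}\,\partial/\partial x^o$: the form $d\eta = dx^i\wedge dp_i$ contains no $dx^o$, so $\iota_R\,d\eta = 0$ automatically; and $\eta(R) = -\lambda_o\cdot(-1/\lambda_o) = 1$. Both defining conditions for the Reeb vector field are satisfied, and uniqueness follows from the contact property already established.

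There is no real obstacle here; the content of the lemma is essentially that after restriction the canonical symplectic model collapses into a Darboux contact model, with $z:=-\lambda_o x^o$ playing the role of the Darboux coordinate so that $\eta = dz - p_i\,dx^i$. The only point worth flagging is the necessity of $\lambda_o\neq 0$, which is exactly what separates this \emph{normal} case from the \emph{abnormal} one treated in the next subsection.
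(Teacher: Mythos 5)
Your computation is correct and is exactly the ``direct proof using the local expression $\eta=-\lambda_o\,\d x^o-p_i\,\d x^i$'' that the paper invokes without writing out; you have simply supplied the details (the expansion of $\eta\wedge(\d\eta)^{m_o}$, the role of $\lambda_o\neq 0$, and the verification of the Reeb conditions). Nothing differs in approach, and your closing observation that $z:=-\lambda_o x^o$ puts $\eta$ in Darboux form is a correct, if optional, sanity check.
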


 The proof is direct using its local expression, $\eta=-\lambda_o\d\,x^o-p_i\d\, x^i$. The minus sign comes from a convention in the definition of the symplectic form in $T^*M$ and the 1-form and 2-form in a contact manifold.
 
 Let $H_N=j^*H$ be the restriction of $H$ to $N$. We have that , locally, $H_N= \lambda_oX^o+p_iX^i$ and we have a \textbf{Hamiltonian contact system} given by $(N,\eta,H_N)$. Let $X_N\in\mathfrak{X}(N)$ be the corresponding contact Hamiltonian vector field, that is:
 $$
 \mathbf i(X_N)\eta=-H_N,\,\,\,\mathbf i(X_N)\d\,\eta=\d\, H_N-(L(R)H_N)\eta\, 
 $$
 whose local expression is
\begin{equation}\label{norm-cont}
 X_N=X^o\frac{\partial}{\partial x^o}+X^i\frac{\partial}{\partial x^i}-
\left(\lambda_o\frac{\partial X^o}{\partial x^i}+
p_j\frac{\partial X^j}{\partial x^i}\right)\frac{\partial}{\partial p_i}\, ,
\end{equation}
with the usual notation confusing the functions on $T^*M$ and their restrictions to $N$.

With this in mind, we have that:

\begin{teor}\

 The vector field $X^*\in\mathfrak{X}(T^*M)$ is tangent to $N$ and, on the points of $N$, it is equal to $X_N$.
	
 \end{teor}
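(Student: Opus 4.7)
The plan is to verify the two defining conditions for a contact Hamiltonian vector field (namely $\iota_{X_N}\eta = -H_N$ and $\iota_{X_N}\dd\eta = \dd H_N - \mathcal{R}(H_N)\eta$) for the restriction $X^*|_N$, exploiting the intrinsic identities $\eta = -j^*\theta_M$ and $H_N = j^*\hat X$. An alternative, purely computational route is available: the local formulae for $X^*$ (from the previous subsection) and for $X_N$ in equation \eqref{norm-cont} are identical once $p_o$ is set to the constant $\lambda_o$, so the statement reduces to matching coordinates. I prefer the intrinsic argument because it explains \emph{why} the match occurs.

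First I would establish tangency. From the coordinate expression of $X^*$ obtained in Section \ref{sympl-ext}, the $\partial/\partial p_o$ component is zero, and this is a direct consequence of the symmetry hypothesis $[\partial/\partial x^o, X]=0$, which forces $X^o$ and $X^i$ to be independent of $x^o$. Hence $X^*(p_o)=0$ on all of $T^*M$, so $X^*$ is tangent to every level set of $p_o$, and in particular to $N$.

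Next I would check the first contact equation. Directly from the definition of the canonical lift and $\theta_M = p_o\,\dd x^o + p_i\,\dd x^i$, one has the tautological identity $\iota_{X^*}\theta_M = \hat X = H$. Pulling back by $j$ and using $\eta = -j^*\theta_M$ yields
\[
\eta(X^*|_N) \;=\; -j^*\bigl(\iota_{X^*}\theta_M\bigr) \;=\; -j^*H \;=\; -H_N.
\]
For the second contact equation, $\dd\eta = -j^*\dd\theta_M = j^*\omega_M$, and since $X^*$ is tangent to $N$, naturality of pullback combined with $\iota_{X^*}\omega_M = \dd H$ gives $\iota_{X^*|_N}\dd\eta = j^*\dd H = \dd H_N$.

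The step I expect to be the delicate one is then showing that the remaining term $\mathcal{R}(H_N)\eta$ is zero, so that what we proved indeed matches the prescribed contact Hamilton equation. Using $\mathcal{R} = -\frac{1}{\lambda_o}\partial/\partial x^o$ and $H_N = \lambda_o X^o + p_i X^i$, one has
\[
\mathcal{R}(H_N) \;=\; -\frac{1}{\lambda_o}\!\left(\lambda_o\frac{\partial X^o}{\partial x^o} + p_i\frac{\partial X^i}{\partial x^o}\right) \;=\; 0,
\]
again by the symmetry hypothesis. This is the conceptual crux: the same commutation relation $[\partial/\partial x^o, X]=0$ that yields tangency to $N$ also kills the Reeb-correction term, so the symplectic Hamilton equation for $X^*$ projects cleanly to the contact Hamilton equation for $X_N$ on $N$. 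With both defining identities verified and uniqueness of the contact Hamiltonian vector field on $(N,\eta)$, we conclude $X^*|_N = X_N$.
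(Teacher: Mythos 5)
Your proof is correct, but it takes a genuinely different route from the paper's. The paper establishes this theorem by pure coordinate comparison: the ``little calculus'' that follows the statement solves the two contact equations componentwise in Darboux coordinates to produce the expression \eqref{norm-cont} for $X_N$, and one then observes that it coincides with the local expression of $X^*$ obtained in Section~\ref{sympl-ext} once $p_o$ is frozen at $\lambda_o$, the vanishing of the $\partial/\partial p_o$ component giving tangency. You instead verify the two defining contact conditions for $X^*|_N$ intrinsically, using the tautological identity $\contr{X^*}\theta_M=\hat X$ of the complete lift, the relations $\eta=-j^*\theta_M$ and $\dd\eta=j^*\omega_M$, and $j$-relatedness, and you close by uniqueness of the contact Hamiltonian vector field on $(N,\eta)$. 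What your version buys is the conceptual point the coordinate computation obscures: the single hypothesis $[\partial/\partial x^o,X]=0$ is responsible both for $X^*(p_o)=0$ (tangency to every level set of $p_o$) and for $\Reeb(H_N)=0$ (the disappearance of the Reeb-correction term), and it is precisely the latter that lets the symplectic equation $\contr{X^*}\omega_M=\dd H$ pull back to the contact Hamilton equation with no extra term; your argument also transfers verbatim to the analogous statements in Sections 5 and 7, whereas the paper repeats the computation there. All your steps check out, including the two places where care is needed: the identity $j^*\left(\contr{X^*}\omega_M\right)=\contr{X^*|_N}\left(j^*\omega_M\right)$ does require the tangency you established first, and the final appeal to uniqueness is licensed because the pair of conditions you verify is equivalent to $\bar{\flat}(X^*|_N)=\dd H_N-(\Reeb(H_N)+H_N)\eta$ with $\bar{\flat}$ an isomorphism on the contact manifold $(N,\eta)$.
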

 
 Hence the normal integral curves to the vector field $X^*$ are solutions of a Hamiltonian contact dynamics on a corresponding contact manifold. The contact system is $(N,\eta, H_N)$.
 
\bigskip

 	\noindent\textbf{Comment: A little calculus} 
 
 Here we give the corresponding calculus to obtain the expression in (\ref{norm-cont}).
 
 We have that $H_N= \lambda_oX^o+p_iX^i$ and $\eta=-\lambda_o\d\,x^o-p_i\d\, x^i$. Denoting $X_N$ by
 $$ 
 X_N=a^o\frac{\partial}{\partial x^o}+a^i\frac{\partial}{\partial x^i}+b_i\frac{\partial}{\partial p_i}
$$ 
the first contact dynamical equation is:
$$
\mathbf i(X_N)\eta=-H_N\,\Rightarrow\,-\lambda_o a_o-a^i p_i=-\lambda_o X^o - p_i X^i
$$
and the second one
$$
\mathbf i(X_N)\d\,\eta=\d\, H_N-(L(R)H_N)\eta\,\Rightarrow\,$$ $$-b_i\d\,x^i+ji\d\,p_i=
\lambda_o\frac{\partial X^o}{\partial x^i}\d\,x^i+X^i\d\,p_i+
p_j\frac{\partial X^j}{\partial x^i}\d\,x^i \, .
$$
Hence 
$$
a^i=X^i,\,\, b_i=-\lambda_o\frac{\partial X^o}{\partial x^i}-
p_j\frac{\partial X^j}{\partial x^i},\,\, a^o=X^o
$$
as we wanted.

\bigskip
\noindent\textbf{b) The abnormal situation: $p_o=0$}

\medskip
This case corresponds to $\lambda_o=0$ and the submanifold $N_o\subset T^*M$ defined by $p_o=0$. Let $j_o:N_o\hookrightarrow T^*M$ be the natural inclusion and $\eta_o=j_o^*\theta_M$.

Observe that $\eta_o=-p_i\d\,x^i$ is not a contact form. In fact, as $m_o=\dim M_o$, we have that $\eta_o\wedge(\d\,\eta_o)^{m_o-1}\neq 0$, but $\eta_o\wedge(\d\,\eta_o)^{m_o}=0$.

We can consider the 2-form $\omega_o=\d\, \eta_o$, the Hamiltonian $H_o=j_o^*H$ and the presymplectic manifold $(N_o,\omega_o,H_o)$. Observe that $\ker\omega_o=\{\frac{\partial}{\partial x^o}\}$. The Hamiltonian presymplectic equation  
$$
\mathbf{i}(X_o)\omega_o= \d\,H_o
$$  
gives the solution
$$
X_o= X^i\frac{\partial}{\partial x^i}-
p_j\frac{\partial X^j}{\partial x^i}\frac{\partial}{\partial p_i}+A\,\frac{\partial}{\partial x^o}\,,
$$ 
where $A$ is arbitrary and corresponds to $\ker\omega_o$. In fact we have that $\dot{x}^o=A$.

It does not exist any constraint because the vector field $X_o$ is defined  on the whole manifold $N_o$. This is because the only constraint is given by $L_T H_ o=0$ with $T\in \ker\omega_o$ and this is fulfilled globally on $N_o$.

\bigskip
\noindent\textbf{Comment}: Observe that $T^*M=\bigcup_{\lambda\in\Real}N_{\lambda}\,$, hence with these decomposition we obtain all the solutions of the initial Hamiltonian problem on $T^*M$ given by the Hamiltonian $H$. 

\section{The contact dynamics approach to Pontryagin Maximum Principle}\label{codynpmp}

\bigskip
Following the ideas of the previous sections, we study a contact approach to the Pontryagin Maximum Principle, in particular to the so called normal solutions to the optimal control problem. In particular we will obtain the normal solutions of an optimal control problem as projection of the integral curves of a Hamiltonian contact system in adequate manifolds. The abnormal solution can be obtained with another different approach given at the end of this section.

\subsection{Statement of the problem}
Let $(M,U,X,I,x_a,x_b)$ be an optimal control problem. We know by Theorem \ref{presym-pmp} that to solve this problem we need to study the associated Hamiltonian presymplectic system $(T^*M\times U,\omega,H)$, that is to obtain an integral curve of the vector field $X_H$ solution to the equation $\mathbf{i}(X_H)\,\omega=\d\, H$, where
$$
\omega=\pi_1^*\omega_o=\d x^o\wedge\ p_o+\d x^i\wedge\d p_i,\quad H=\hat X=p_oF+p_iX^i
$$
and $\pi_1:TM^*\times U\to TM^*$. Recall that $\ker\omega=\left\{\partial/\partial u^a\right\}$.

The solution to the equation $\mathbf{i}(X_H)\,\omega=\d\, H$ is given by:
\begin{equation} X_H=F\frac{\partial}{\partial x^o}+X^i\frac{\partial}{\partial x^i}-
\left(\lambda_o\frac{\partial F}{\partial x^i}+
p_j\frac{\partial X^j}{\partial x^i}\right)\frac{\partial}{\partial p_i}+A^a\frac{\partial}{\partial u^a}\, .
\end{equation}

Observe that this solution exists all over the manifold
$T^*M\times U$ and that $p_o$ is constant for every curve solution to the problem. The last term corresponds to the elements of $\ker\omega$.

The minimality, compatibility, conditions are $\frac{\partial H}{\partial u^a}=0$ for every $a$, are used to determine the controls.

As we said in Section 3, if the compatibility equations allows us to determine the controls $u^1,\ldots,u^k$, that is we can obtain $u^a=	\psi(x^o,x^i,p_o,p_i)$, then we say that the optimal control problem is \textbf{regular}, otherwise it is called \textbf{singular}. In the singular case, it is necessary to apply an algorithm of constraints, that is to go to higher order conditions, to obtain the controls perhaps on a submanifold o $T^*M\times U$. Suppose that we are in the regular situation, hence we have determined the controls by the compatibility conditions.

With the regularity assumption as the controls $u^a$ has been determined, we have that $X_H$ is projected to the manifold $T^*M$ and has components only in $(x^o,x^i,p_o,p_i)$.Then we have:
\begin{equation} X_H=F\frac{\partial}{\partial x^o}+X^i\frac{\partial}{\partial x^i}-
\left(\lambda_o\frac{\partial F}{\partial x^i}+
p_j\frac{\partial X^j}{\partial x^i}\right)\frac{\partial}{\partial p_i}\end{equation}
because we are in the symplectic case.

We know that, for all the solutions of the associated presymplectic formulation, we have that the moment  $p_o(t)$ is a constant. Following the previous section,  we will try to classify the solutions according to the real value of $p_o$. Hence we define and study

\begin{enumerate}
  \item[a)] \textbf{Normal solutions}: those with $p_o=\lambda_o\neq 0$.
  \item[b)] \textbf{Abnormal solutions}: those with $p_o=\lambda_o=0$.  
\end{enumerate}

\subsection{Normal solutions:  \texorpdfstring{$p_o=\lambda_o\neq 0$}{}}

Let $N\subset T^*M$ be the submanifold defined by $p_o=\lambda_o$ and $j:N\hookrightarrow T^*M$ be the natural inclusion.  We denote by $(x^o,x^i,p_i)$ the coordinates induced in $N$ by the coordinates we have in $T^*M$.

Consider now the canonical 1-form $\theta_M\in\Omega^1(T^*M)$ and let $
\eta=-(j)^*\theta_M$, then we have that

\begin{lem}$(N,\eta)$ is a contact manifold. The Reeb vector field is $R=-\frac{1}{\lambda_o}\frac{\partial}{\partial x^o}$.	
\end{lem}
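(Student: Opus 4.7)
The plan is to verify the two defining conditions for $(N,\eta)$ to be a contact manifold by a direct local-coordinate computation, and then check the Reeb equations by inspection. Since $\dim M = m_o + 1$, the hypersurface $N = \{p_o = \lambda_o\} \subset T^*M$ has odd dimension $2m_o + 1$, which is the correct parity for a contact structure.

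First, I would pull back the Liouville form: in the natural coordinates $(x^o, x^i, p_o, p_i)$ on $T^*M$, one has $\theta_M = p_o\, dx^o + p_i\, dx^i$, so on $N$ (where $p_o = \lambda_o$ and $(x^o, x^i, p_i)$ are coordinates),
\begin{equation*}
\eta \;=\; -\,j^*\theta_M \;=\; -\lambda_o\, dx^o \;-\; p_i\, dx^i.
\end{equation*}
Then $d\eta = -\,dp_i \wedge dx^i = dx^i \wedge dp_i$, so
\begin{equation*}
(d\eta)^{m_o} \;=\; m_o!\, dx^1 \wedge dp_1 \wedge \cdots \wedge dx^{m_o} \wedge dp_{m_o}.
\end{equation*}

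Second, I would wedge with $\eta$. The term $-p_i\, dx^i$ of $\eta$ dies against $(d\eta)^{m_o}$ because each $dx^i$ already appears; only the $-\lambda_o\, dx^o$ summand survives, giving
\begin{equation*}
\eta \wedge (d\eta)^{m_o} \;=\; -\lambda_o\, m_o!\; dx^o \wedge dx^1 \wedge dp_1 \wedge \cdots \wedge dx^{m_o} \wedge dp_{m_o},
\end{equation*}
which is nowhere zero precisely because $\lambda_o \neq 0$. This shows $\eta$ is a contact form on $N$, and in fact exhibits $(x^o, x^i, p_i)$ as (a rescaling of) Darboux coordinates in the sense of the paper's earlier normalization.

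Third, I would verify the Reeb vector field. Setting $\mathcal R = -\tfrac{1}{\lambda_o}\,\partial/\partial x^o$, one checks immediately
\begin{equation*}
\contr{\mathcal R}\,\eta \;=\; -\tfrac{1}{\lambda_o}\,(-\lambda_o) \;=\; 1, \qquad \contr{\mathcal R}\,d\eta \;=\; \contr{\mathcal R}(dx^i \wedge dp_i) \;=\; 0,
\end{equation*}
since $\partial/\partial x^o$ annihilates all of the $dx^i$ and $dp_i$. By uniqueness of the Reeb field on a contact manifold this is the Reeb vector field of $(N,\eta)$. There is no real obstacle here; the only subtlety worth flagging is that the hypothesis $\lambda_o \neq 0$ is used exactly once, in the non-degeneracy step, and that the minus sign in $\eta = -j^*\theta_M$ is the convention that makes $\eta = dz - p_i\, dx^i$-type forms (after the identification $z = -\lambda_o x^o$ up to scaling) agree with the Darboux model used in Section 2.
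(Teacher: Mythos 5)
Your proof is correct and follows exactly the route the paper indicates: the paper simply states that ``the proof is direct using its local expression, $\eta=-\lambda_o\,\dd x^o-p_i\,\dd x^i$,'' and your computation of $\eta\wedge(\dd\eta)^{m_o}=-\lambda_o\,m_o!\,\dd x^o\wedge\dd x^1\wedge\dd p_1\wedge\cdots\wedge\dd x^{m_o}\wedge\dd p_{m_o}$ together with the two Reeb conditions is precisely the verification being left to the reader. No gaps; the observation that $\lambda_o\neq 0$ is used exactly in the non-degeneracy step is the right thing to flag.
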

 
 Let $H_N=(j)^*H$ the restriction of $H$ to $N$, then $H_N= \lambda_o(X)^o+p_i(X)^i$ and we have a Hamiltonian contact system given by $(N,\eta,H_N)$. Let $X_N\in\mathfrak{X}(N)$ be the corresponding contact Hamiltonian vector field, that is the solution to the equations
 $$
 \mathbf i(X_N)\eta=-H_N,\,\,\,\mathbf i(X_N)\d\,\eta=\d\, H_n-(L(R)H_N)\eta\, 
 $$
 whose local expression is
\begin{equation}\label{norm-cont-u}
 X_N=X^o\frac{\partial}{\partial x^o}+X^i\frac{\partial}{\partial x^i}-
\left(\lambda_o\frac{\partial X^o}{\partial x^i}+
p_j\frac{\partial X^j}{\partial x^i}\right)\frac{\partial}{\partial p_i}\, .
\end{equation}
With the usual notation denoting by the same names the functions on $T^*M$ and their restrictions to $N$.

With this in mind and following Section \ref{sympl-ext}, we have that:

\begin{prop}\

 The vector field $X_H\in\mathfrak{X}(T^*M)$ is tangent to $N$ and, on the points of $N$, it is equal to $X_N$.	
 \end{prop}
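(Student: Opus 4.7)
My plan is to prove the two assertions (tangency and agreement on $N$) separately. Tangency follows from an elementary coordinate observation together with the symmetry hypothesis, and agreement can then be checked either by directly matching local expressions or, more intrinsically, by verifying that $X_H|_N$ satisfies the contact Hamiltonian equations on $(N,\eta,H_N)$ and invoking the uniqueness that $\bar\flat$ being an isomorphism on a contact manifold provides.

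For tangency, I note that in the displayed local expression of $X_H$ there is no $\partial/\partial p_o$ term; equivalently, $X_H(p_o) = -\partial H/\partial x^o$. Because the symmetry hypothesis $[\partial/\partial x^o, X]=0$ (inherited from the extended optimal control setup) forces both $F$ and the $X^i$ to be independent of $x^o$, and $H = p_o F + p_i X^i$, we get $\partial H/\partial x^o = 0$. Hence $p_o$ is a first integral of $X_H$, and $X_H$ is tangent to every level set $\{p_o=\lambda_o\}=N$.

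For the agreement $X_H|_N = X_N$, the direct route is to substitute $p_o = \lambda_o$ into the displayed expression for $X_H$ and read off that it coincides with (\ref{norm-cont-u}) verbatim, recalling that in this setting the role of $X^o$ is played by $F$. For a conceptual alternative, I would use $j^*\theta_M = -\eta$ (and consequently $j^*\omega_M = \d\eta$) together with $j^*H = H_N$ to pull back $\mathbf{i}(X_H)\omega_M = \d H$ along $j$; tangency lets me commute the pullback with the interior product, yielding $\mathbf{i}(X_H|_N)\d\eta = \d H_N$, and a direct contraction shows $\eta(X_H|_N) = -(\lambda_o F + p_i X^i) = -H_N$. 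By the characterization of contact Hamiltonian vector fields given by the proposition in Section 2 (equations (\ref{eq:precontact_Hamiltonian_vf_2-1})–(\ref{eq:precontact_Hamiltonian_vf_2-2})), together with $\mathcal{R}(H_N) = -\lambda_o^{-1}\partial H_N/\partial x^o = 0$ (again from the symmetry hypothesis), these two relations coincide with the defining equations of $X_N$, and the uniqueness of the contact Hamiltonian vector field forces $X_H|_N = X_N$.

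The main subtlety, such as it is, lies in handling the sign conventions $\eta = -j^*\theta_M$ and $\omega_M = -\d\theta_M$ carefully when restricting; otherwise the argument reduces to a transparent calculation whose nontrivial content is entirely the symmetry hypothesis $[\partial/\partial x^o,X]=0$, which simultaneously guarantees tangency and annihilates the Reeb derivative of $H_N$, collapsing the a priori more elaborate contact Hamiltonian equation to the restriction of the symplectic one.
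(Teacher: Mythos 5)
Your argument is correct, and your ``direct route'' for the agreement is exactly what the paper does: the paper's justification (given as the ``little calculus'' comment in Section 4, to which the Section 5 proposition defers) consists of writing $X_N=a^o\partial/\partial x^o+a^i\partial/\partial x^i+b_i\partial/\partial p_i$, solving the two contact equations in Darboux-type coordinates to get $a^o=X^o$, $a^i=X^i$, $b_i=-\lambda_o\partial X^o/\partial x^i-p_j\partial X^j/\partial x^i$, and observing that this is verbatim the restriction of the symplectic Hamiltonian vector field to $\{p_o=\lambda_o\}$, tangency being immediate from the absence of a $\partial/\partial p_o$ term. Your second, intrinsic route is a genuine alternative that the paper does not spell out: pulling back $\mathbf{i}(X_H)\omega_M=\d H$ along $j$ (using $j^*\omega_M=\d\eta$, which follows from the sign conventions $\omega_M=-\d\theta_M$ and $\eta=-j^*\theta_M$), computing $\eta(X_H|_N)=-\theta_M(X_H)|_N=-H_N$ directly from the linearity of $H$ in the momenta, noting $\Reeb(H_N)=0$ by the symmetry hypothesis, and concluding by the uniqueness afforded by the isomorphism $\bar\flat$ via the characterization \eqref{eq:precontact_Hamiltonian_vf_2-1}--\eqref{eq:precontact_Hamiltonian_vf_2-2}. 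This buys a coordinate-free proof that isolates the two distinct roles of $[\partial/\partial x^o,X]=0$ (tangency, and the vanishing of the Reeb derivative that reduces the contact equation to the restricted symplectic one), at the modest cost of the sign bookkeeping you already flag; the paper's computation is more elementary but leaves that structural point implicit.
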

 
 Hence, for every $u\in U$, all the normal solutions to the optimal control problem are solutions to a contact Hamiltonian problem.
 
\subsection{Abnormal solutions:  \texorpdfstring{$p_o=\lambda_o=0$}{}}

Let $N_o\subset T^*M$ the submanifold defined by $p_o=0$. Let $j_o:N_o\hookrightarrow T^*M$ be the natural inclusion and $\eta_o =j_o^*\,\theta_M$.

As above, $\eta_o=-p_i\d\,x^i$ is not a contact form and we have that $\eta_o\wedge(\d\,\eta_o)^{m_o}=0$.

We can consider the 2-form $\omega_o=\d\,\eta_o$, 
the Hamiltonian $H_o=j_o^*H$ and the 
presymplectic manifold $(N_o,\omega_o,H_o)$. 
Observe that $\ker\omega_o=\{\frac{\partial}{\partial x^o}\}$. The Hamiltonian presymplectic equation  
$$
\mathbf{i}(X_o)\omega_o= \d\,H_o
$$  
gives the solution
$$
X_o= X^i\frac{\partial}{\partial x^i}-
p_j\frac{\partial X^j}{\partial x^i}\frac{\partial}{\partial p_i}+A^a\,\frac{\partial}{\partial u^a}\,,
$$
where $A^a$ are arbitrary and correspond to $\ker\omega_o$. 

And it does not exist any constraint because the vector field $X_o$ is defined  on the whole manifold $N_o$.

\textbf{Note}: We can also solve the precontact problem given by $(N_o^u,\eta_o^u,H_o^u)$.

\bigskip
\noindent\textbf{Comment}: Observe that $T^*M=\bigcup_{\lambda\in\Real}N_{\lambda}\,$, hence with this decomposition we obtain all the solutions of the  Hamiltonian problem on $T^*M$ given by the Hamiltonian $H$. Some of them, the normal solutions, as contact problems, and the abnormal solutions as symplectic ones.

With all this in mind, we have proved the following

\begin{teor}: \textbf{Contact Pontryagin Maximum Principle}\label{contact-pmp}

Consider the optimal control problem $(M,U,X,I,x_a,x_b)$, with $U\subset\Real^k$ an open set. 
Let $\hat\sigma:I\to T^*M\times U=T^*\Real\times T^*M_o\times U$, $\hat\sigma=(\sigma_{T^*M},\sigma_U)$, be a solution of the presymplectic Pontryagin Maximum Principle for such problem and suppose we are in the \textbf{regular} case, that is the minimality  conditions $(\partial H/\partial u)=0$, for every $u\in U$ and for every $t\in I$ allows to determine the controls.
 Then
 \begin{enumerate}
  \item[a)] if $\hat\sigma$ is a \textbf{normal solution} with $p_o=\lambda_o\neq 0$, then s it is an integral curve of the \textbf{contact Hamiltonian system} $(N,\eta,H_N)$, as described above, with $H_N=\lambda_o F+p_iX^i$. 

\item[b)] if $\hat\sigma$ is an \textbf{abnormal solution}, then it is an integral curve of the \textbf{presymplectic Hamiltonian system} $(N_o,\omega_o,H_o)$, as described above, with $H_o=p_i\, X^i$ . 
\end{enumerate}
\end{teor}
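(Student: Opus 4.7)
The plan is to assemble the theorem from pieces already established earlier in the paper: Theorem \ref{presym-pmp} produces the Hamiltonian vector field $X_H$ on $T^*M\times U$, and Section \ref{vf-codyn} describes exactly how such a vector field restricts to the level sets $\{p_o=\lambda_o\}$. The proof is essentially a bookkeeping exercise connecting these two results.

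First, I would invoke Theorem \ref{presym-pmp} to conclude that any solution $\hat\sigma=(\sigma_{T^*M},\sigma_U)$ is an integral curve of the Hamiltonian vector field $X_H$ of the presymplectic system $(T^*M\times U,\omega,H)$ with $H=p_oF+p_iX^i$. The regularity hypothesis means the compatibility conditions $\partial H/\partial u^a=0$ can be solved for $u^a=\psi^a(x^o,x^i,p_o,p_i)$. Substituting these back into $X_H$, the $\partial/\partial u^a$--component becomes determined by differentiation of $\psi^a$ along the flow, and the projection $\pi_1\circ\hat\sigma$ to $T^*M$ is an integral curve of the projected vector field
\begin{equation*}
X_H=F\frac{\partial}{\partial x^o}+X^i\frac{\partial}{\partial x^i}-\left(p_o\frac{\partial F}{\partial x^i}+p_j\frac{\partial X^j}{\partial x^i}\right)\frac{\partial}{\partial p_i}
\end{equation*}
on $T^*M$ (where now $X^i$, $F$ are evaluated at the determined controls). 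The Hamiltonian equation $\dot p_o=-\partial H/\partial x^o=0$ follows from the hypothesis $[\partial/\partial x^o,X]=0$ combined with the analogous property for $F$ built into the extended system (Section 3), so $p_o=\lambda_o$ is a constant of motion. This justifies splitting solutions by the value of $\lambda_o$.

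For part (a), with $\lambda_o\neq 0$, the flow stays on the submanifold $N=\{p_o=\lambda_o\}$, and the lemma (already stated and proved by the expression $\eta=-\lambda_o\,\dd x^o-p_i\,\dd x^i$) gives the contact structure on $N$ with Reeb vector field $R=-\lambda_o^{-1}\partial/\partial x^o$. The key step is then to identify $j^*X_H$ with the contact Hamiltonian vector field $X_N$ of $H_N=j^*H=\lambda_o F+p_iX^i$. I would verify this by checking the two defining equations $\contr{X_N}\eta=-H_N$ and $\contr{X_N}\dd\eta=\dd H_N-(R(H_N))\eta$ directly in Darboux coordinates --- this computation is already carried out in the ``little calculus'' paragraph of Section \ref{normalsolut} (with $X^o$ there playing the role of $F$ here), so one merely imports it. Comparing the resulting local expression for $X_N$ with the restricted $X_H$ above shows they coincide term by term.

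For part (b), with $\lambda_o=0$, the flow stays on $N_o=\{p_o=0\}$, where $\eta_o=-p_i\,\dd x^i$ fails to be contact but $\omega_o=\dd\eta_o$ is presymplectic with $\ker\omega_o=\langle\partial/\partial x^o\rangle$. Restricting $H$ gives $H_o=p_iX^i$, and direct substitution into $\contr{X_o}\omega_o=\dd H_o$ reproduces the local formula for $X_H|_{N_o}$ up to an arbitrary multiple of $\partial/\partial x^o$, which is precisely the ambiguity of presymplectic solutions. Since $L_{\partial/\partial x^o}H_o=0$ globally on $N_o$, the compatibility step of the constraint algorithm is trivially satisfied and no secondary constraints appear, so $\hat\sigma$ is a genuine integral curve of the presymplectic system $(N_o,\omega_o,H_o)$.

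The only mildly delicate step is the tangency and identification in part (a): one must check that $X_H$ is actually tangent to $N$ (which is immediate from $\dot p_o=0$) and that the restriction matches the contact Hamiltonian vector field, since the formal defining equations for these two vector fields look different ($\contr{X_H}\omega=\dd H$ on one side, versus the contact equations involving $\eta$, $\dd\eta$, and $R(H_N)$ on the other). This is resolved by the explicit Darboux computation referenced above, which is the main --- and only --- nontrivial content of the proof.
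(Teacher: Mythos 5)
Your proposal is correct and follows essentially the same route as the paper: invoke Theorem \ref{presym-pmp}, use regularity to eliminate the controls, observe that $p_o$ is a constant of motion, and then split into the normal case (restriction to $N=\{p_o=\lambda_o\}$ identified with the contact Hamiltonian vector field via the Darboux computation of Section \ref{normalsolut}) and the abnormal case (the presymplectic system on $N_o$ with no secondary constraints). The paper's own proof is exactly this assembly of the preceding lemmas and the ``little calculus'' paragraph, so there is nothing to add.
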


\bigskip
For the \textbf{normal solutions}, they satisfy the differential equations:
$$
\dot x^o=\frac{\partial H}{\partial p_o}=F,\,\,\dot p_o=\frac{\partial H}{\partial x^o}=0,\,\,(\Rightarrow p_o=ct) 
$$
$$
\dot x^i=\frac{\partial H}{\partial p_i}=X^i,\,\, 
\dot p^i=-\frac{\partial H}{\partial x^i}=-p_o\frac{\partial F}{\partial x^i}-p_j\frac{\partial X^j}{\partial x^i}
$$
$$
\frac{\partial H}{\partial u^1}=0,\ldots,\frac{\partial H}{\partial u^k}=0 
$$
where $H=\lambda_oF+p_iX^i$ with $\lambda_o\neq 0$.

For the \textbf{abnormal solutions}, the corresponding differential equations are
$$
\dot x^i=\frac{\partial H}{\partial p_i}=X^i,\,\, 
\dot p^i=-\frac{\partial H}{\partial x^i}=-p_j\frac{\partial X^j}{\partial x^i}
$$
$$
\frac{\partial H}{\partial u^1}=0,\ldots,\frac{\partial H}{\partial u^k}=0 
$$
where $H=p_iX^i$.

\section{Herglotz variational problem as an optimal control problem}\label{Hvp-ocp}
In Section \ref{Hvp} we have studied the Herglotz variational principle; there we obtained the contact equations for a Hamiltonian contact system as solution of a variational problem but with a generalization of the Hamilton variational principle. This more general principle was stated and solved in 1930 by Gustav Herglotz, see \cite{He-1930,GuGoGu-96}. The idea was to change the integral statement on the curves solution to the problem by a differential equation defined precisely by the Lagrangian function. Interest in this approach has been increasing since the last referred publication and its relation with contact dynamics and dissipation systems, see for example \cite{GeGu2002, Bravetti2017a} and references therein. In this Section we approach Herglotz principle as an optimal control problem and find the corresponding differential equations, the generalized Euler--Lagrange equations, with a new proof through the Pontryagin Maximum Principle.

\subsection{Statement of the problem}\label{herglotz_problem}
We begin recalling the statement of the Herglotz variational problem as we did in Section \ref{Hvp}.

Let $Q$ be a smooth manifold and  $F:TQ\times\Real\to\Real$ a smooth function and consider the following problem:

\bigskip
\noindent\textbf{Herglotz variational problem}: Find curves $\Gamma=(\gamma,\zeta):I=[a,b]\to Q\times\Real$, such that
\begin{enumerate}
  \item[1)] end points conditions: $\gamma(a)=q_a, \gamma(b)=q_b,\zeta(a)=0$, 
  \item[2)] $\Gamma$ is an integral curve of $\dot{z}=F(q,v,z)$: $\dot{\zeta}=F(\gamma(t),\dot\gamma(t),\zeta(t))$, for every $t\in I$, and
  \item[3)] extreme condition: $\zeta(b)$  is maximum over all curves satisfying 1) and 2). 
  \end{enumerate}
  
 Observe that we have considered the differential equation $\dot{z}=F(q,v,z)$ depending on the curves $\gamma$. In the case that the function $F$ does not depend on the variable $z$, that is $F:TQ\to\Real$, then the diferential equations is $\dot z=F(\gamma,\dot\gamma)$, hence by integration, the problem is the classical variational one defined by: find the curves $\gamma(t)$ minimizing 
 $$
 S[\gamma]=\int_a^b F(\gamma(t),\dot\gamma(t))\,\d\, t
 $$
with initial conditions $\gamma(a)=q_a, \gamma(b)=q_b$.

As we know, Herglotz obtained that the curves $\gamma$ solution to this problem satisfy the so called generalized Euler--Lagrange equations
$$
 \frac{\d}{\d t}\left(\frac{\partial F}{\partial v^i}\right)_\gamma-\frac{\partial F}{\partial q^i}
-\frac{\partial F}{\partial z}\frac{\partial F}{\partial v^i}=0\, .
 $$
In this section we will obtain these differential equations as an application of the Pontryagin Maximum Principle to a suitable optimal control problem associated to the Herglotz variational problem.

To do so, we begin by giving a geometric statement of the Herglotz problem. Given the function  $F:TQ\times\Real:\to\Real$, consider the right up triangle of the following diagram 
$$
\xymatrix{&  &T\Real\ar[d]^{\tau_o}\\
&{TQ\times \Real}\ar[d]^{\tau_Q\times I_{\Real}}\ar[r]^{\,\,\,\,\pi_2}\ar[ru]^Z&\Real\\
I\ar[r]^{\Gamma}\ar[ru]^{\hat\Gamma}& Q \times\Real & }
$$
where $Z\in\vf(\Real,\pi_2)$ is the vector field on $\Real$ along the projection $\pi_2$ defined by
$$
Z=F \,\frac{\partial }{\partial z}\, .
$$
Now taking the full diagram, we have the following problem associated with the vector field $Z$ 

\bigskip
\noindent\textbf{Geometric Herglotz variational problem}: Find curves $\Gamma:I=[a,b]\to Q\times\Real$,  $\Gamma=(\gamma,\zeta)$, such that
\begin{enumerate}
  \item[1)] end points conditions: $\Gamma(a)=(q_a,0) , \gamma(b)=q_b$, 
  \item[2)] $\Gamma$ is an integral curve of $Z$: $\dot{\zeta}=F(\tilde\Gamma(t))$, for every $t\in I$, where $\tilde\Gamma=(\gamma'=(\gamma,\dot\gamma),\zeta)$, and
  \item[3)] extreme condition: $\zeta(b)$  is maximum over all curves satisfying 1) and 2). 
  \end{enumerate}
Obviously the two above  problems are equivalent. The difference is only in the language used to state them.

\subsection{Optimal control approach to the Herglotz variational problem}

Associated to the function $F:TQ\times\Real:\to\Real$,  consider the following diagram
$$
\xymatrix{&T(Q\times\Real)\ar[d]^{\tau_{Q\times\Real}}\\
{TQ\times \Real}\ar[r]^{\tau_Q\times I_\Real}\ar[ru]^Y&Q\times\Real }
$$
where $Y$ is the vector field on $Q\times \Real$ along the projection $\tau_Q\times I_\Real:TQ\times \Real\to Q\times \Real$ defined by  $Y((q,v),z)=((q,z),v,F))$, which in local coordinates  $Y$ is given by
$$
Y=v^i\frac{\partial}{\partial q^i}+F\frac{\partial}{\partial z}\, .
$$
This vector field corresponds to the system of ordinary differential equations:
$$
\dot q^i=v^i,\qquad \dot z=F(q^i, v^i,z)\, .
$$
Observe that the first sumand of the vector field $Y$ is a canonical vector field along the projection $\tau_Q:TQ\to Q$, it corresponds to the identity map  $I_{TQ}:TQ\to TQ$. Hence the vector field $Y$ is associated in a natural way to the function $F$.

These elements define a \textit{control system} with \textit{vector field }$Y\in\vf(Q\times\Real, \tau_Q\times I_\Real)$, on the \textit{state space} $Q\times\Real$, and with the fibres of $TQ$ as the \textit{set of controls}; that is for every state $(q,z)\in Q\times\Real$, the controls are the elements $v\in T_q Q$.

On this control system we state the following optimal control problem: Consider the diagram
$$
\xymatrix{
&T(Q\times\Real)\ar[d]_{\tau_{Q\times\Real}}\\
TQ\times \Real\ar[r]^{\tau_Q\times I_\Real}\ar[ru]^Y&Q\times\Real
\\
& I\ar[u]^\Gamma\ar[ul]^{\tilde\Gamma}\ar@/_7mm /[uu]_{\Gamma'}}
$$
where, if $\Gamma=(\gamma,\zeta)$, then $\tilde\Gamma=(\gamma',\zeta)=((\gamma,\dot\gamma),\zeta)$.

For a curve $\Gamma:I\to Q\times\Real$, we take its canonical lifting  to the tangent bundle, $\Gamma':I\to T(Q\times\Real)$, that is:  if $\Gamma=(\gamma,\zeta)$ then $\Gamma'= ((\gamma,\zeta), (\dot\gamma, \dot\zeta))$. 

We say that a curve $\Gamma$ is an \textbf{integral curve} of the vector field $Y$ if:
$$
\Gamma'=Y\circ\tilde\Gamma\, ,\qquad  (\dot\gamma,\dot\zeta)=Y(\gamma,\dot\gamma,\zeta=(v^i(\gamma,\dot\gamma),F(\gamma,\dot\gamma,\zeta)))\, ,
$$
which, in local coordinates, is a solution to the above system of differential equations:
$$
\dot q^i=v^i,\qquad \dot z=F(q^i, v^i,z)\, .
$$

Hence we have the optimal control problem given by:

\bigskip
\noindent\textbf{Optimal control problem associated to Herglotz variational problem}: 

\medskip
\noindent Find curves $\Gamma:I=[a,b]\to Q\times\Real$,  $\Gamma=(\gamma,\zeta)$, such that
\begin{enumerate}
  \item[1)] end points conditions: $\Gamma(a)=(q_a,0) , \gamma(b)=q_b$, 
  \item[2)] $\Gamma$ is an integral curve of $Y$: $\Gamma'(t)=Y(\tilde\Gamma(t))$, for every $t\in I$ and
  \item[3)] optimal condition: $\zeta(b)$  is maximum over all curves satisfying 1) and 2). 
  \end{enumerate}
 Observe that the optimal condition can be stated as:
 $$
 {\mathrm max}\,\, z(b)={\mathrm max} \int_a^b\dot z(t)\d t={\mathrm max} \int_a^bF(q(t),\dot q(t),z(t))\d t\, ,
 $$
 hence we have a classical optimal control theory with $F$ as the cost function. 
 
  This optimal control problem, which can be solved using the Pontryagin Maximum Principle, is \textbf{equivalent} to the above Herglotz variational problem: if $\Gamma=(\gamma,\zeta)$ is a solution to the above optimal control problem then $\gamma$ is a solution to the Herglotz variational problem and $\dot\zeta=F(\gamma,\dot\gamma,\zeta)$, and conversely.
  
  We denote this problem by $(M,U,X,I,x_a,x_b)=(Q\times\Real, TQ, Y,I,q_a,q_b)$ with the notation described in Section 3.
  
  \subsection{Application of the presymplectic form of the Pontryagin Maximum Principle}
  
  According to Section 3, first we have to extend the problem and declare the direction  where the optimization must be done using the cost function.
  
  \subsubsection{The extended problem}
 Observe that  in the above optimal control problem, $(Q\times\Real, TQ, Y,I,q_a,q_b)$, the cost function is $F$, that corresponds also to the state variable $z$, then we need to extend the problem adding a new variable with $F$ as derivative. Denote by $q^o\in\Real$ this new variable. The differential equation to add to the system is $\dot q^o= F(q,v,z)$. To change to this extended problem we need to consider the diagram
 
  $$
\xymatrix{&T(\Real\times Q\times\Real)\ar[d]^{\tau}\\
{\Real\times TQ\times \Real}\ar[r]^{I_{\Real}\times\tau_Q\times I_\Real}\ar[ru]^{\hat Y}&\Real\times Q\times\Real }\, ,
$$

\medskip
\noindent and take the control system given by the \textbf{dynamical vector field} $\hat Y\in\vf(\Real\times Q\times\Real,I_{\Real}\times \tau_Q\times I_\Real)$, which in coordinates reads
  $$
  \hat Y=F\frac{\partial}{\partial q^o}+v^i\frac{\partial}{\partial q^i}+F\frac{\partial}{\partial z}\, ,
  $$
  with the manifold $\Real\times Q\times\Real$ as \textbf{state space}  and with \textbf{controls} the fibres of $TQ$, that is for every state $(q^o,q,z)\in \Real\times Q\times\Real$, the controls are the elements of $v\in T_q Q$.

On this system,  the precise statement of the optimal control problem we have is:
 
 \bigskip
\noindent \textbf{Extended optimal control formulation of the Herglotz variational problem}:  

\medskip
\noindent Find curves $\hat\Gamma:I=[a,b]\to\Real\times Q\times\Real$,  $\Gamma=(\gamma^o,\gamma,\zeta)$, such that
\begin{enumerate}
  \item[1)] end points conditions: $\hat\Gamma(a)=(0,q_a,0) , \gamma(b)=q_b$, 
  \item[2)] $\hat\Gamma$ is an integral curve of $\hat Y$: $\hat\Gamma'(t)=\hat Y(\tilde{\hat\Gamma}(t))$, for every $t\in I$, where  $\tilde{\hat\Gamma}:I\to\Real\times TQ\times\Real$, $\tilde{\hat\Gamma}=(\gamma^o,((\gamma, \dot\gamma),\zeta)$,and
  \item[3)] extreme condition: $\zeta(b)$  is maximum over all curves satisfying 1) and 2). 
  \end{enumerate}
  
  This is the optimal control problem denoted by $(\Real\times Q\times\Real, TQ, \hat Y,I,q_a,q_b)$.
  
  \subsubsection{Solution of the extended problem with the presymplectic form of the Pontryagin Maximum Principle}

Following Section 3, to solve this optimal control problem consider the projection 
$$\hat\pi_1: T^*(\Real\times  Q\times\Real)\times TQ\to T^*(\Real\times  Q\times\Real)\,\, .$$

This last manifold has a canonical symplectic form $\omega_{\Real \times Q \times \Real} \in\Omega^2(T^*(\Real\times  Q\times\Real))$, $\omega_{\Real \times Q \times \Real}=-\d\theta_{\Real \times Q \times \Real}$, which in canonical coordinates, $(q^o,p_o,q^i,p_i,z,p_z)$, reads
$$
\omega_{\Real \times Q \times \Real}=-\d \theta_{\Real \times Q \times \Real}=-\d(p_o\d q^o+p_i\d x^i+p_z\d z)=\d q^o\wedge\d p_o+\d x^i\wedge\d p_i+\d z\wedge\d p_z\,\, .
$$
Let $\omega=\hat\pi_1^*\,\omega_{\Real \times Q \times \Real}$, then $\omega
$ is a presymplectic form in 
$T^*(\Real\times  Q\times\Real)\times TQ$, its kernel being the vector fields  tangent to $TQ$ which are vertical vector fields, that is tangent to the fibres of $\tau_Q:TQ\to Q$. Hence $\ker \omega$ is locally generated by
$$
\frac{\partial}{\partial v^1},\ldots,\frac{\partial}{\partial v^n}\,\, ,
$$
if $\dim Q=n$. The local expressions of $\omega$ and $\omega_o$ are the same, with the usual abuse of notation for the local coordinates.

With the vector field $\hat Y$, as usually,  we can built a natural Hamiltonian function given by $H:T^*(\Real\times  Q\times\Real)\times TQ\to\Real$, locally given as
$$
H(q^o,p_o,x^i,p_i,z,p_z,v^i)=p_oF+p_iv^i+p_z F\, ,
$$
and consider the presymplectic system $(T^*(\Real\times  Q\times\Real)\times TQ,\omega,H)$.

The corresponding Hamiltonian vector field $X_H$, satisfying the equation $\textbf{i}_{X_H}\omega=\d H$, is locally given by
\begin{eqnarray}
X_H&=&F\frac{\partial}{\partial q^o}+0\frac{\partial}{\partial p_o}+
v^i\frac{\partial}{\partial q^i}+F\frac{\partial}{\partial z}
\\
&-&\left(p_o\frac{\partial F}{\partial q^i}+p_z\frac{\partial F}{\partial q^i}\right)
\frac{\partial }{\partial p_i}-
\left(p_o\frac{\partial F}{\partial z}
+p_z\frac{\partial F}{\partial z}\right)
\frac{\partial }{\partial p_z}+A^i\frac{\partial}{\partial v^i}\,\, ,
\end{eqnarray}

\noindent where the last term corresponds to the kernel of $\omega$.

The compatibility conditions for the presymplectic system, or the optimality conditions, are given  by, see \cite{got79,MURo-92},
$$
L_VH=0
$$
for every $V\in \ker\omega$, when restricted to the curves $\sigma=(\sigma^o, \delta_0,\sigma^i,\delta_i, \sigma^z,\delta_z, w^i)$, solution to the system of differential equations
\begin{eqnarray}
\dot q^o=F,&\dot p_o=0\\
\dot q^i=v^i,&
\dot p_i=-(p_o+p_z)\frac{\partial F}{\partial q^i}\\
\dot z=F, &\dot p_z=-(p_o+p_z)\frac{\partial F}{\partial z}\\
\dot v^i=A^i
\end{eqnarray}
where the $A^i$ are free. These differential equations correspond to the integral curves of the vector field $X_H$.

In local coordinates, the compatibility conditions are $L_{\frac{\partial}{\partial v^i}}H=0$, for every $i=1,\ldots, n$. As $H=(p_o+p_z)F+p_iv^i$, we have:
$$
L_{\frac{\partial}{\partial v^i}}H=(p_o+p_z)\frac{\partial F}{\partial v^i}+p_i=0\, .
$$
In the weak presymplectic Pontryagin Maximum Principle, these are the conditions from where we can obtain the controls $v^i$, looking for the critical points of $H$ with respect to the controls.

 In the present situation, these functions are constraints defining a submanifold of $T^*(\Real\times  Q\times\Real)\times TQ$, and the Hamiltonian vector field solution, $X_H$, have to be tangent to this submanifold, hence:
 $$
 L_{X_H}\left((p_o+p_z)\frac{\partial F}{\partial v^i}+p_i\right)=0\, ,
 $$

 \noindent but
$$
 L_{X_H}\left((p_o+p_z)\frac{\partial F}{\partial v^i}+p_i\right)=
 \!\!(p_o+p_z)\!\left(
v^j\frac{\partial^2 F}{\partial q^j\partial v^i}+F\frac{\partial^2 F}{\partial z\partial v^i}
-\frac{\partial F}{\partial q^j}-\frac{\partial F}{\partial z}\frac{\partial F}{\partial v^j}+A^j\frac{\partial^2 F}{\partial v^j\partial v^i}\right)\, ,
$$
 where $A^j= \ddot v^j$. Hence we have:

 $$
  (p_o+p_z)\left(
v^j\frac{\partial^2 F}{\partial q^j\partial v^i}+F\frac{\partial^2 F}{\partial z\partial v^i}
-\frac{\partial F}{\partial q^j}
- \frac{\partial F}{\partial z}\frac{\partial F}{\partial v^j}+\ddot v^j\frac{\partial^2 F}{\partial v^j\partial v^i}\right)=0\, .
 $$

 Which on the curves solution gives
 \begin{eqnarray*}
 L_{X_H}\left((p_o+p_z)\frac{\partial F}{\partial v^i}+p_i\right)
=(p_o+p_z)\left(\frac{\d}{\d t}\frac{\partial F}{\partial v^i}-\frac{\partial F}{\partial q^i}
-\frac{\partial F}{\partial z}\frac{\partial F}{\partial v^i}\right)=0\, .
\end{eqnarray*}

 These differential equations are a necessary condition, for a curve $\sigma$ on the manifold $T^*(\Real\times Q\times\Real)\times TQ$, to be solution of  the presymplectic system have to satisfy when it is projected to $Q\times\Real$ .
 
 But we have that
 
 \begin{lem}
 On the solution curves the quantity $p_o+p_z$ is not null unless may be on some isolated points.
  \end{lem}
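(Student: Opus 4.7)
The plan is to exploit the particular structure of the ODE system governing integral curves of $X_H$, and observe that $\phi(t):=p_o(t)+p_z(t)$ itself satisfies a linear homogeneous first-order ODE. From the equations of motion derived above, $\dot p_o=0$ and $\dot p_z=-(p_o+p_z)\,\partial F/\partial z$. Adding these two identities and using the constancy of $p_o$ gives
\begin{equation*}
\dot\phi(t)=-\phi(t)\cdot\frac{\partial F}{\partial z}\bigl(\gamma(t),\dot\gamma(t),\zeta(t)\bigr),
\end{equation*}
where the coefficient is a smooth function of $t$ along the solution curve.

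Next, I would invoke the existence and uniqueness theorem for linear homogeneous ODEs. The solution of this ODE is explicitly
\begin{equation*}
\phi(t)=\phi(a)\,\exp\!\left(-\int_a^t \frac{\partial F}{\partial z}\bigl(\gamma(s),\dot\gamma(s),\zeta(s)\bigr)\,\dd s\right),
\end{equation*}
and because the exponential factor is strictly positive, $\phi(t)=0$ at some $t$ if and only if $\phi\equiv 0$ on the whole interval $I$. Thus the zero set of $\phi$ is either empty or all of $I$; in particular it cannot consist of a nontrivial proper closed subset, let alone accumulate, which already gives more than the statement claims.

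Finally, I would rule out the identically-zero case by appealing to the non-triviality requirement built into the Pontryagin Maximum Principle. If $\phi\equiv 0$ then the compatibility condition $(p_o+p_z)\,\partial F/\partial v^i+p_i=0$ forces $p_i\equiv 0$, and the relation $p_z=-p_o=\mathrm{const}$ makes the adjoint curve degenerate in such a way that the maximum-principle multipliers collapse into the trivial configuration excluded by PMP (equivalently, the presymplectic solution is not an admissible extremal lift of the control problem $(\Real\times Q\times\Real,TQ,\hat Y,I,q_a,q_b)$). Hence $\phi(a)\neq 0$ on any admissible extremal, and therefore $\phi(t)\neq 0$ for every $t\in I$, which is strictly stronger than the stated assertion.

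The main obstacle, and the only delicate point, is the last step: carefully pinning down the non-triviality statement in the \emph{extended, presymplectic} formulation (with the additional variable $q^o$ accounting for the cost) so as to genuinely exclude the case $\phi\equiv 0$; the rest is a routine linear-ODE argument.
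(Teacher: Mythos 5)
Your proof takes essentially the same route as the paper's: both reduce the claim to a linear homogeneous first-order ODE along the solution curve and solve it explicitly via an exponential integrating factor (the paper writes the equation for $p_z$ itself and exhibits the general solution, whereas you work directly with $\phi=p_o+p_z$, which is cleaner and makes the dichotomy ``identically zero or never zero'' immediate). Your additional attempt to exclude the branch $\phi\equiv 0$ goes beyond the paper, which leaves that case silently aside; as you yourself note, this is the one genuinely delicate point, and your appeal to PMP nontriviality does not quite close it, since the configuration $p_i\equiv 0$, $p_z=-p_o$ with $p_o\neq 0$ is not the trivial multiplier vector, so it is not automatically excluded by the standard nontriviality clause.
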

\begin{proof}: We know that $\dot p_z=-(p_o+p_z)\frac{\partial F}{\partial z}$
  and $\dot p_o=0$, hence $p_o$ is a constant, then the differential equation defining $p_z$ is
  $$
  \dot{p}_z=-p_z\frac{\partial F}{\partial z}-p_o\frac{\partial F}{\partial z}=-Ap_z-p_oA\, ,
  $$
  where, on the solution curves, $A$ is a function of $t$. This is a linear differential equation whose general solution is
  $$
  p_z(t)=p_o+\lambda \exp{(-\int A)}\,,\quad \lambda\in\Real\,   , \,\, p_o\in\Real,
  $$
 and the proof is finished.
 \end{proof}
 
 \subsection{The final results}
 
 From the above Lemma we obtain that:
 
 \begin{teor}
 If 	$\sigma=(\sigma^o, \delta_0,\sigma^i,\delta_i, \sigma^z,\delta_z, w^i)$ is a solution to the presymplectic system  $(T^*(\Real\times  Q\times\Real)\times TQ,\omega,H)$, then its projection to $Q\times\Real$,  $(\sigma^i,\sigma^z)$, satisfyes  the equations
 $$
 \frac{\d}{\d t}\frac{\partial F}{\partial v^i}-\frac{\partial F}{\partial q^i}
-\frac{\partial F}{\partial z}\frac{\partial F}{\partial v^i}=0
 $$
 \end{teor}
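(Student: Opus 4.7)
The plan is to read off the conclusion almost immediately from the computations already assembled just before the statement: the work to be done is bookkeeping plus one division, and the only real content is the invocation of the preceding lemma.

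First, I would recall what the presymplectic constraint algorithm has already given us. The compatibility conditions $L_V H=0$ for $V\in\ker\omega$ produce the primary constraints $\phi_i:=(p_o+p_z)\frac{\partial F}{\partial v^i}+p_i=0$ on $T^*(\Real\times Q\times\Real)\times TQ$. The Hamiltonian vector field $X_H$ must be tangent to the submanifold defined by these constraints, so the secondary consistency conditions are $L_{X_H}\phi_i=0$ along every integral curve $\sigma$. This is purely a rewriting of the stabilization step of the Gotay--Nester--Hinds algorithm and requires no new computation beyond what is already displayed.

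Next, I would carry out this Lie derivative explicitly along the flow of $X_H$. Using $\dot q^j=v^j$, $\dot z=F$, $\dot p_o=0$, $\dot p_z=-(p_o+p_z)\partial F/\partial z$, $\dot p_i=-(p_o+p_z)\partial F/\partial q^i$, and $\dot v^j=A^j$ (which, on the primary constraint surface, is forced to be $\ddot v^j$), the calculation already shown in the excerpt yields
\[
L_{X_H}\phi_i \;=\; (p_o+p_z)\left(v^j\frac{\partial^2 F}{\partial q^j\partial v^i}+F\frac{\partial^2 F}{\partial z\partial v^i}+\ddot v^j\frac{\partial^2 F}{\partial v^j\partial v^i}-\frac{\partial F}{\partial q^i}-\frac{\partial F}{\partial z}\frac{\partial F}{\partial v^i}\right).
\]
Reading the first three terms in the inner parenthesis as $\frac{\d}{\d t}\frac{\partial F}{\partial v^i}$ evaluated along $\sigma$ (since the argument of $F$ is $(q(t),v(t),z(t))$ with $\dot q^j=v^j$ and $\dot z=F$), this collapses to $(p_o+p_z)\bigl(\frac{\d}{\d t}\frac{\partial F}{\partial v^i}-\frac{\partial F}{\partial q^i}-\frac{\partial F}{\partial z}\frac{\partial F}{\partial v^i}\bigr)=0$.

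Finally, I would invoke the preceding lemma, which solves the linear ODE for $p_z$ and shows that $p_o+p_z$ can vanish only at isolated instants. Restricting to the open dense set where $p_o+p_z\neq 0$, I divide by this factor and obtain the generalized Euler--Lagrange equations; by continuity the equations then extend to the whole interval $I$. Projecting $\sigma$ to $Q\times\Real$ gives exactly the pair $(\sigma^i,\sigma^z)$ in the statement, which completes the argument.

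The step I expect to be the least automatic is the careful verification that the apparent $A^j=\dot v^j$ appearing in the raw Lie derivative should be read as $\ddot v^j$ on the constraint submanifold (so that one recovers an actual total time derivative of $\partial F/\partial v^i$), and the mild subtlety that the conclusion holds \emph{a priori} only off the isolated zeros of $p_o+p_z$. Both points are handled by the tangency requirement from the presymplectic algorithm together with the lemma, so there is no genuine obstacle, only a point that must be stated explicitly.
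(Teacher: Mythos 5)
Your proposal is correct and follows essentially the same route as the paper: the primary constraints $\phi_i=(p_o+p_z)\frac{\partial F}{\partial v^i}+p_i=0$ from the compatibility conditions, their stabilization $L_{X_H}\phi_i=0$, the recognition that the second-derivative terms assemble into $\frac{\d}{\d t}\frac{\partial F}{\partial v^i}$ along the flow, and the final division by $p_o+p_z$ justified by the preceding lemma. One minor remark: the coefficient $A^j$ of $\partial/\partial v^j$ in $X_H$ equals $\dot v^j$ on integral curves, not $\ddot v^j$ (the paper's text contains the same slip), and it is precisely with $A^j=\dot v^j$ that the grouping into a total time derivative you perform is valid.
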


 Hence if we include in the statement the original problem, we have proven the following
 \begin{teor}
 If $\sigma=(\sigma^o, \delta_0,\sigma^i,\delta_i, \sigma^z,\delta_z, w^i)$, $\sigma:I\to T^*(\Real\times  Q\times\Real)\times TQ$,  is a solution to the presymplectic system $(T^*(\Real\times  Q\times\Real)\times TQ,\omega,H)$, then  
 
 \begin{enumerate}
  \item[a)] its projection to $\Real\times Q\times\Real$, $\hat\Gamma:I=[a,b]\to\Real\times Q\times\Real$,  $\Gamma=(\gamma^o=\sigma^o,\gamma=(\sigma^i),\zeta=\delta_z)$, is a solution to the extended optimal control problem	$(\Real\times Q\times\Real, TQ, \hat Y,I,q_a,q_b)$
\item[b)] its projection to $Q\times\Real$, $\Gamma:I=[a,b]\to Q\times\Real$,  $\Gamma=(\gamma=(\sigma^i),\zeta=\delta_z)$, is a solution to the optimal control problem $(Q\times\Real, TQ, Y,I,q_a,q_b)$.
\end{enumerate}
 \end{teor}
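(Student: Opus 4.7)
The plan is to reduce the claim to the previous theorem together with the equivalence between the Herglotz variational problem and the optimal control problems established in this section. The essential content is that a solution $\sigma$ to the presymplectic system automatically satisfies, after projection, both the integral-curve condition for $\hat Y$ (hence for $Y$) and the extremality condition on $\zeta(b)$, the latter by virtue of the generalized Euler--Lagrange equations supplied by the previous theorem.

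First I would unpack what it means for $\sigma$ to be a solution to the presymplectic system $(T^*(\Real\times Q\times\Real)\times TQ,\omega,H)$. The Hamilton equations coming from $\mathbf{i}_{X_H}\omega=\dd H$ include $\dot q^o = F$, $\dot q^i = v^i$, $\dot z = F$. Reading these off, the curve $\hat\Gamma = (\gamma^o,\gamma,\zeta) = (\sigma^o,(\sigma^i),\delta_z)$ satisfies $\hat\Gamma'(t)=\hat Y(\tilde{\hat\Gamma}(t))$, which is precisely the integral-curve condition for $\hat Y$, and its further projection $\Gamma = (\gamma,\zeta)$ then satisfies $\Gamma'(t)=Y(\tilde\Gamma(t))$. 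The end-point conditions $\hat\Gamma(a)=(0,q_a,0)$ and $\gamma(b)=q_b$ are imposed on $\sigma$ as part of the Pontryagin setup, and are inherited by the two projections. In particular, since $\dot q^o=\dot z$ and $q^o(a)=z(a)=0$, one has $\gamma^o(t)=\zeta(t)$ throughout $I$.

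For the extremality requirement I would invoke the previous theorem, which tells us that the projection $(\gamma,\zeta)$ satisfies the generalized Euler--Lagrange equations
\[
\frac{\dd}{\dd t}\frac{\partial F}{\partial v^i}-\frac{\partial F}{\partial q^i}-\frac{\partial F}{\partial z}\frac{\partial F}{\partial v^i}=0.
\]
Combined with the contact variational principle of Section~\ref{Hvp}, this says that $\gamma$ is a critical point of the action $\mathcal{A}:\Omega(q_a,q_b)\to\Real$, $\gamma\mapsto \mathcal{Z}(\gamma)(b)=\zeta(b)$, which is exactly the first-order optimality requirement defining a solution of the Herglotz variational problem and, by the equivalence established in Section~\ref{herglotz_problem}, of the optimal control problems $(Q\times\Real,TQ,Y,I,q_a,q_b)$ and $(\Real\times Q\times\Real,TQ,\hat Y,I,q_a,q_b)$. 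Assembling the integral-curve condition, the end-point conditions and the extremality condition yields parts (a) and (b).

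The main obstacle, though largely bookkeeping, is keeping the two redundant variables $q^o$ and $p_o$ (introduced only so that the PMP can process the cost integral in its standard form) aligned with the genuine state variable $z$ and its conjugate $p_z$. Once one observes that $\dot q^o=F=\dot z$ with matching initial conditions forces $\gamma^o\equiv\zeta$, and that $\dot p_o=0$ makes $p_o$ a constant playing the role of a Lagrange multiplier, the extended formulation collapses cleanly onto the unextended one, and both (a) and (b) drop out of a single application of the previous theorem together with the contact variational principle. One subtlety worth flagging explicitly is that \emph{solution} here is understood in the sense of a candidate extremal satisfying the first-order necessary conditions, in line with the PMP, so no independent claim of sufficiency for a global maximum is made or needed.
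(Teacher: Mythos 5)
Your proposal is correct in substance, but it reaches the extremality part of the statement by a genuinely different route than the paper. The paper offers no explicit proof here: the theorem is presented as an immediate consequence of the construction in the preceding subsections, namely that the presymplectic system $(T^*(\Real\times Q\times\Real)\times TQ,\omega,H)$ is \emph{by construction} the system that the presymplectic Pontryagin Maximum Principle (Theorem \ref{presym-pmp}) attaches to the extended problem $(\Real\times Q\times\Real, TQ, \hat Y,I,q_a,q_b)$, so its solutions are, by definition of the necessary conditions, the extremals of that problem, and part b) follows from the equivalence of the extended and unextended formulations. You instead establish extremality by combining the preceding theorem (the generalized Euler--Lagrange equations hold along the projection) with the contact variational principle of Section \ref{Hvp}, concluding that $\gamma$ is a critical point of the action $\mathcal{A}$. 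This is logically sound, since that variational principle is proved independently in the cited references, but it cuts against the declared purpose of Section \ref{Hvp-ocp}, which is to give a \emph{new}, PMP-based derivation of the Herglotz equations that does not lean on the variational principle; if one later uses the present theorem to re-derive that principle, your route would become circular, whereas the paper's does not. What your route buys is an actual argument where the paper has only an appeal to the setup, and you correctly flag the point both treatments share: only first-order criticality of $\zeta(b)$ is obtained, not maximality, so ``solution'' must be read as ``extremal.'' Two small bookkeeping remarks: the endpoint conditions are an additional hypothesis on $\sigma$ rather than something automatically encoded in being an integral curve of $X_H$, and the preceding theorem you invoke silently uses the Lemma that $p_o+p_z$ is nonvanishing off isolated points, so your argument inherits that dependence.
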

 
 \bigskip
As we know, this optimal control problem is equivalent to the Herglotz variational problem given by  the function $F:TQ\times\Real:\to\Real$, the interval $I=[a,b]$ and the initial conditions $q_a,q_b$, then we have proven the

\begin{teor}
Given the manifold $Q$ and the function $F:TQ\times\Real:\to\Real$. If
$\Gamma=(\gamma,\zeta)$ is a solution to the Herglotz variational problem defined by  $F$, then $\Gamma$ satisfies the differential equations
$$
 \frac{\d}{\d t}\frac{\partial F}{\partial v^i}-\frac{\partial F}{\partial q^i}
-\frac{\partial F}{\partial z}\frac{\partial F}{\partial v^i}=0\, ,
 $$
  which are known as \textbf{generalized Euler-Lagrange equations} for the Herglotz problem.
\end{teor}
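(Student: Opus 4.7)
The plan is to reduce the statement to the presymplectic analysis already carried out in this section. Given a solution $\Gamma=(\gamma,\zeta)$ of the Herglotz variational problem, I would first invoke the equivalence established above: $\Gamma$ can equivalently be viewed as a solution of the optimal control problem $(Q\times\Real, TQ, Y, I, q_a, q_b)$, and by adjoining the cost variable $q^o$ with $\dot q^o = F$, as a solution $\hat\Gamma=(\gamma^o,\gamma,\zeta)$ of the extended optimal control problem $(\Real\times Q\times\Real, TQ, \hat Y, I, q_a, q_b)$.

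Next, I would apply the presymplectic Pontryagin Maximum Principle (Theorem~\ref{presym-pmp}) to lift $\hat\Gamma$ to a curve $\hat\sigma$ on $T^*(\Real\times Q\times\Real)\times TQ$, which is an integral curve of the Hamiltonian vector field $X_H$ of the presymplectic system $(T^*(\Real\times Q\times\Real)\times TQ,\omega,H)$, where $H=(p_o+p_z)F+p_iv^i$. Since $\ker\omega$ is spanned by the vertical vectors $\partial/\partial v^i$, the compatibility (optimality) conditions read $(p_o+p_z)\,\partial F/\partial v^i + p_i = 0$ and cut out a primary constraint submanifold.

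The key step is then the tangency requirement $L_{X_H}\!\left((p_o+p_z)\partial F/\partial v^i + p_i\right)=0$ along the solutions. Using the explicit components of $X_H$ computed above and substituting $A^j=\ddot v^j$ along integral curves, this tangency condition collapses to
$$
(p_o+p_z)\!\left(\frac{\d}{\d t}\frac{\partial F}{\partial v^i}-\frac{\partial F}{\partial q^i}-\frac{\partial F}{\partial z}\frac{\partial F}{\partial v^i}\right)=0 ,
$$
so the Euler--Lagrange bracket appears multiplied by the factor $p_o+p_z$.

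The main obstacle is separating off this factor, and this is precisely what the Lemma above is designed to handle: since $\dot p_o=0$ and the evolution of $p_z$ is governed by a linear ODE with explicit solution $p_z(t)=-p_o+\lambda\exp(-\!\int\! A)$, the combination $p_o+p_z$ can vanish at most at isolated instants. By continuity of the bracketed expression along $\hat\sigma$, it must therefore vanish identically on $I$; projecting $\hat\sigma$ to $Q\times\Real$ via the preceding theorems of this section yields precisely the generalized Euler--Lagrange equations for $\Gamma$, completing the proof.
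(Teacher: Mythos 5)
Your proposal follows essentially the same route as the paper: recast the Herglotz problem as the (extended) optimal control problem, apply the presymplectic Pontryagin Maximum Principle with $H=(p_o+p_z)F+p_iv^i$, derive the tangency condition on the primary constraints $(p_o+p_z)\partial F/\partial v^i+p_i=0$ to obtain the Euler--Lagrange bracket multiplied by $p_o+p_z$, and remove that factor via the lemma on the linear ODE for $p_z$. Your explicit solution $p_z(t)=-p_o+\lambda\exp(-\int A)$ is in fact the correct integration of $\dot p_z=-(p_o+p_z)A$ (the paper writes $+p_o$, a sign slip), so your version is, if anything, slightly cleaner.
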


See \cite{GuGoGu-96,GeGu2002} for comparison between different proofs.
\section{Herglotz optimal control problem}
In this section we give a generalization of the classical optimal control problem in the same way that Herglotz variational problem is a generalization of Hamilton principle in mechanics.
\subsection{Statement of the problem}
As it was described in Section 3, a classical optimal control problem is given by the elements $(M,U,X,F,I,x_a,x_b)$. The cost function $F:M\times U\to\Real$ is used to express the optimization condition as an integral 
$$
S[\gamma(t)=(x(t),u(t))]=\int_a^b F(x(t), u(t))\d\,t\, ,
$$
which is a functional on the curves $\gamma:I\to M\times U$, satifying some initial conditions and being integral curves of the vector field $X$, that is $\dot{x}=X(x(t),u(t))$.

This is ``similar" to the classical variational calculus with $F$ in the role of the Lagrangian and the integrability condition for the curves as a contraint. 

But the generalization proposed and studied by Herglotz changes the integral functional as  the element to optimize by a diferential equation  satisfied by a new variable, denoted by $z$, differential equation just defined by the cost function $F$, that is $\dot z=F(x,u,z)$, instead of the above integral; see Sections \ref{Hvp} or \ref{Hvp-ocp} for more details.

Now we propose a generalization of the classical optimal control problem following the ideas of Herglotz.

Remembering the elements giving us an optimal control problem, we have the diagram
$$
\xymatrix{
  & TM\ar[d]^{\tau_M}\\
M\times U\ar[ru]^X \ar[r]^\pi & M\, ,  }$$
that is $X\in\vf(M,\pi)$, $X(x,u)$, and a cost  function, $F(x,u)$, to integrate on the curves solution to the differential equation given by $X$. Instead of this cost function, we take a function $F:M\times\Real\times U\to\Real$, depending also on a new variable $z$, and consider the following problem:

\bigskip
\noindent\textbf{Herglotz optimal control problem}:

Find curves $\gamma:I=[a,b]\to M\times \Real\times U$, $\gamma=(\gamma_M,\gamma_z,\gamma_U)$, such that
\begin{enumerate}
  \item[1)] end points conditions: $\gamma_M(a)=x_a, \gamma_M(b)=x_b, \gamma_z(a)=0$,
  \item[2)] $\gamma_M$ is an integral curve of $X$: $\dot{\gamma}_M=X\circ(\gamma_M,\gamma_U)$\, , 
  \item[3)] $\gamma_z$ satifies the differential equation $\dot z=F(x,z,u)$, and  
  \item[4)] maximal condition: $\gamma_z(b)$ is maximum over all curves satisfying 1), 2) and 3). 
  \end{enumerate}
  
  The differential equations corresponding to this problem are
  \begin{equation}\label{Hocp-difeq}
  \dot x^i=X^i(x,u),\qquad \dot z=F(x,z,u)\, .
  \end{equation}
  
  If the function $F$ does not depend on $z$, then the maximal condition takes the form
  $$
  \hspace{-30mm} 4')\,\, z(b)= \int_a^b F(x,u)\d t\quad  \mathrm{ is\,\,  maximum}
  $$
 which gives a classical optimal control problem. Hence we have a generalization of the classical problem in the sense of Herglotz.
 
 In order to solve this problem we begin by transforming it into a classical optimal control problem.

\subsection{Solution to Herglotz optimal control problem}\label{sec:hocp_vf}

There is another way to organize all these elements, $(M,U,X,F,x_a,x_b)$, in a shorter form. Consider the following diagram
$$
\xymatrix{
&T(M\times\Real)\ar[d]^{\tau_{M\times\Real}}\\
{M\times\Real\times U}\ar[ur]^{Z}\ar[r]^{\quad\pi}&M\times\Real\\
I\ar[u]^\gamma\ar[ru]^{\Gamma}
}
$$
where  $Z=X+Y$, that is  $Z=X^i\frac{\partial}{\partial x^i}+F\frac{\partial}{\partial z}$, locally. And the curves are $\Gamma=(\gamma_M,\gamma_z)$, $\gamma= (\Gamma,\gamma_U)= (\gamma_M,\gamma_z,\gamma_U)$.

\medskip
Then we have another equivalent statement:

\bigskip
\noindent\textbf{Herglotz optimal control problem}: Find curves $\gamma:I=[a,b]\to M_o\times \Real\times U$, $\gamma=(\gamma_M,\gamma_z,\gamma_U)$,  $\Gamma=(\gamma_m,\gamma_z)$ such that
\begin{enumerate}
  \item[1)] end points conditions: $\gamma_M(a)=x_a, \gamma_M(b)=x_b, \gamma_z(a)=0$,
  \item[2)] $\Gamma_M$ is an integral curve of $Z$: $\dot{\Gamma}_M=Z\circ\gamma$\, ,  and
  \item[3)]  optimal condition: $\gamma_z(b)$ is maximum over all curves satisfying 1),
  and 2). 
  \end{enumerate}\label{hocp}

Condition 2) is written as $(\dot\gamma_,\dot\gamma_z)=Z\circ\gamma$, that is
\begin{equation}
  \dot x^i=X^i(x,u),\qquad \dot z=F(x,z,u)\, .
  \end{equation}
which are the same set of differential equations as equations (\ref{Hocp-difeq}). Hence both problems are equivalent. In the sequel we refer to this second form.

Observe that with this  approach, we have a classical optimal control problem and we can find its solution following the method of the Section 3, in particular by applying the weak presymplectic form of the Pontryagin Maximum Principle, Theorem \ref{presym-pmp}. In this case, the function to optimize is one of the directions of sate space which is given by $z$.

We begin, as usual, by extending the vector field, hence obtaining the extended system adding a new variable $x^o$ for the variable $z$ to maximize. The new vector field is
$$
\overline{X}=  F\frac{\partial}{\partial x^o}+X^i\frac{\partial}{\partial x^i}+F\frac{\partial}{\partial z}\in\vf(\Real\times M\times\Real)\, .
$$
Then the associated Hamiltonian is $H(x^o,p_o,x^i,p_i,z,p_z,u)=p_oF+p_iX^i+p_z F$, defined on the manifold $T^*(\Real\times M\times\Real)\times U$. The presymplectic form is $\omega=\d x^o\wedge\d p_o+\d x^i\wedge\d p_i+\d z\wedge\d p_z$, with kernel given by the tangent vector fields to $U$, and the Hamiltonian vector field $X_H$, solution to the equation $\textbf{i}_{X_H}\omega=\d H$, is locally given by
\begin{eqnarray*}
X_H&=&F\frac{\partial}{\partial x^o}+0\frac{\partial}{\partial p_o}+
X^i\frac{\partial}{\partial x^i}+F\frac{\partial}{\partial z}
\\
&-&\left(p_o\frac{\partial F}{\partial x^i}+p_j\frac{\partial X^j}{\partial x^i}+p_z\frac{\partial F}{\partial x^i}\right)
\frac{\partial }{\partial p_i}-
\left(p_o\frac{\partial F}{\partial z}
+p_z\frac{\partial F}{\partial z}\right)
\frac{\partial }{\partial p_z}\\
&+&A^a\frac{\partial}{\partial u^a}\,\, ,
\end{eqnarray*}
where the last term corresponds to the kernel of $\omega$.

Observe that this solution exists all over the manifold
$T^*(\Real\times M\times\Real)\times U$ and that $p_o$ is constant for every curve solution to the problem. 

Being a presymplectic system, the compatibility equations are given by $\mathbf{i}(Z)\d\,H=0$ for every $Z\in\ker\omega$, that is equations

\begin{equation}\label{hpmp-controls}
\frac{\partial H}{\partial u^1}=0,\ldots,\frac{\partial H}{\partial u^k}=0
\end{equation} 

\noindent which, together with the equations coming from the vector field $X_H$, give us a set of equations to solve the optimal control problem. Recall that these compatibility conditions are the same that the optimality ones.

As in ordinary optimal control problems, suppose that the compatibility equations allow us to determine the controls $u^1,\ldots,u^k$, that is we can obtain $u^a=\psi(x^o,x^i,p_o,p_i)$, then we say that the optimal control problem is \textbf{regular}, otherwise it is called \textbf{singular}. In the singular case, it is necessary to apply an algorithm of constraints, that is to go to higher order conditions, to obtain the controls perhaps on a submanifold of $T^*(\Real\times M\times\Real)\times U$. 

The differential equations associated with the above vector field $X_H$, together with equations (\ref{hpmp-controls}) are the solution equations to the Herglotz optimal control problem.

\bigskip
\begin{remark}

To understand the significance of these equations, we can compare the above set of equations with the corresponding ones for a classical optimal control system. Apart from the compatibility conditions, which are the same, the vector field solution, see Theorem \ref{presym-pmp}, was given by

\begin{equation} X_N=F\frac{\partial}{\partial x^o}+X^i\frac{\partial}{\partial x^i}-
\left(\lambda_o\frac{\partial F}{\partial x^i}+
p_j\frac{\partial X^j}{\partial x^i}\right)\frac{\partial}{\partial p_i}\, .
\end{equation}
Comparing this vector field $X_N$ with the above $X_H$, in this last we have a new variable, $z$, hence two new terms, one for $\dot z$ and the other for $\dot p_z$. Moreover, the term corresponding to $p_i$ has changed. 

But if the cost function $F$ does not depend on $z$, then we have that $\dot p_z=0$, hence $p_z=\mathrm {constant}$, and both equations, the classical and the Herglotz optimal control, are the same.  In fact in this last case, we can change the differential equation $\dot z=F(x,z,u)$ and the optimality condition by the integral to be optimized
$$
\int_a^b F(x,u)\d t
$$
and we obtain exactly the classical problem.

Hence, as we proposed at the beginning of the section, we actually have a generalization of the classical optimal control problem from the point of view of the equations solving the problem.
\end{remark}

\subsection{Contact formulation for the normal solutions}
We can analyze the set of \textbf{normal solutions}, that is $p_o\neq 0$, in the aim of the Section \ref{normalsolut} and obtain these solutions as integral curves of contact dynamical systems.

\bigskip
To proceed suppose we are in the regular situation, that is the maximality conditions allows us to determine the controls. To study this situation we can  fix the controls, they are determined by the last equations solution to the problem, and analyze the other equations as solutions of a symplectic problem. Then, once fixed $u=u_o$,  our manifold is $T^*(\Real\times  M\times\Real)$. In this manifold we can analize the problem as a contact dynamical system.

For a given $\lambda_o\in\Real$, $\lambda_o\neq 0$, consider the submanifold $N_{\lambda_o}\subset T^*(\Real\times  M\times\Real)$, given by $p_o=\lambda_o$ and the natural injection $j_{\lambda_o}:N_{\lambda_o} \hookrightarrow T^*(\Real\times  M\times\Real)$. Let $\eta=-j_{\lambda_o}^*\theta\in\Omega^1(N_{\lambda_o})$. Then we have

\begin{lem}
 For every fixed $u\in U$, the manifold $(N_{\lambda_o}, \eta)$ is a contact manifold. Its Reeb vector field is given by 
 $$R_{\lambda_o}=-\frac{ 1}{ \lambda_o}\frac{\partial }{\partial x^o}$$
\end{lem}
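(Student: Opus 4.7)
The plan is to do the verification directly in the induced coordinates on $N_{\lambda_o}$, exactly parallel to the analogous lemma in Section \ref{normalsolut}, and then check the two defining equations of the Reeb field by a one-line computation.

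First, I would pull back the canonical one-form. Writing $\theta = p_o\,\dd x^o + p_i\,\dd x^i + p_z\,\dd z$ on $T^*(\Real\times M\times\Real)$ and restricting to $N_{\lambda_o}=\{p_o=\lambda_o\}$ via $j_{\lambda_o}$, the coordinates $(x^o, x^i, p_i, z, p_z)$ chart $N_{\lambda_o}$ and
\[
\eta = -j_{\lambda_o}^*\theta = -\lambda_o\,\dd x^o - p_i\,\dd x^i - p_z\,\dd z.
\]
A direct exterior differentiation then gives $\dd\eta = \dd x^i\wedge\dd p_i + \dd z\wedge \dd p_z$, which does not involve $\dd x^o$.

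Next, I would check the non-degeneracy condition. Setting $m=\dim M$, the manifold $N_{\lambda_o}$ has dimension $2m+3=2(m+1)+1$, so we need $\eta\wedge(\dd\eta)^{m+1}\neq 0$. The $(m+1)$-th power $(\dd\eta)^{m+1}$ is, up to the combinatorial factor $(m+1)!$, the standard volume form $\dd x^1\wedge\dd p_1\wedge\cdots\wedge\dd x^m\wedge\dd p_m\wedge\dd z\wedge\dd p_z$ on the $(x^i,p_i,z,p_z)$-slice, so when wedging with $\eta$ only the $-\lambda_o\,\dd x^o$ summand survives (the $\dd x^i$ and $\dd z$ summands already appear inside $(\dd\eta)^{m+1}$). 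Thus $\eta\wedge(\dd\eta)^{m+1} = -\lambda_o(m+1)!\,\dd x^o\wedge\dd x^1\wedge\dd p_1\wedge\cdots\wedge\dd x^m\wedge\dd p_m\wedge\dd z\wedge\dd p_z$, which is nowhere zero precisely because $\lambda_o\neq 0$. This establishes that $(N_{\lambda_o},\eta)$ is a contact manifold.

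Finally, for the Reeb vector field, I would simply verify the two characterising equations $\contr{R_{\lambda_o}}\eta=1$ and $\contr{R_{\lambda_o}}\dd\eta=0$ for the candidate $R_{\lambda_o}=-\tfrac{1}{\lambda_o}\,\partial/\partial x^o$: the first is $(-\lambda_o)(-1/\lambda_o)=1$, while the second is immediate because $\dd\eta$ contains no $\dd x^o$ factor. Uniqueness of the Reeb vector on a contact manifold then identifies this as the Reeb field. The role of the fixed $u\in U$ is purely a placeholder in the surrounding discussion; neither $\eta$ nor $N_{\lambda_o}$ depends on it, so no genuine obstacle arises — the only subtlety to note is the necessity of $\lambda_o\neq 0$, which is what degenerates the construction in the abnormal case.
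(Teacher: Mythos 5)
Your proposal is correct and follows exactly the route the paper intends: it simply states that the proof is ``straightforward using the local expression $\eta=-\lambda_o\,\dd x^o-p_i\,\dd x^i-p_z\,\dd z$,'' and your computation of $\dd\eta$, of $\eta\wedge(\dd\eta)^{m+1}=-\lambda_o\,(m+1)!\,\dd x^o\wedge\dd x^1\wedge\dd p_1\wedge\cdots\wedge\dd z\wedge\dd p_z$, and the verification of $\contr{R_{\lambda_o}}\eta=1$, $\contr{R_{\lambda_o}}\dd\eta=0$ are precisely the omitted details. Your remarks that the fixed $u$ plays no role and that $\lambda_o\neq 0$ is the essential hypothesis are both accurate and consistent with the paper's treatment of the abnormal case.
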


The proof is straightforward using the local expression of $\eta$
$$
\eta = -\lambda_o\d x^o-p_i\d x^i-p_z\d z\,\, .
$$
 
 Let $H_{N_{\lambda_o}}=j_{\lambda_o}^*H$ and consider the Hamiltonian contact system given by $(N_{\lambda_o},\eta, H_{N_{\lambda_o}})$. Let $Z\in \vf(N_{\lambda_o})$ the corresponding Hamiltonian vector field, that is the solution to the contact equations
 
$$
 \mathbf i(Z)\eta=-H_{N_{\lambda_o}},\,\,\,\mathbf i(Z)\d\,\eta=\d\, H_{N_{\lambda_o}}-(L(R_{\lambda_o})H_{N_{\lambda_o}})\eta\, 
 $$
whose local expression is
\begin{eqnarray*}
X_H&=&F\frac{\partial}{\partial x^o}+0\frac{\partial}{\partial p_o}+
X^i\frac{\partial}{\partial x^i}+F\frac{\partial}{\partial z}\\
&-&\left(p_o\frac{\partial F}{\partial x^i}+
p_j\frac{\partial X^j}{\partial x^i}+
p_z\frac{\partial F}{\partial x^i}\right)
\frac{\partial }{\partial p_i}-
\left(p_o\frac{\partial F}{\partial z}
+
p_z\frac{\partial F}{\partial z}\right)
\frac{\partial }{\partial p_z}\,\, .
\end{eqnarray*}

\bigskip
With the above expressions and comments we have proven the 

\begin{teor}\label{thm:EOCP_precontact}
  The normal solutions to the problem \ref{hocp} corresponding to $p_o=\lambda_o\neq 0$ are the projections to $\Real\times M\times\Real\times U$ of the curves solution to the contact Hamiltonian problem given by $(N_{\lambda_o},\eta, H_{N_{\lambda_o}})$. 
\end{teor}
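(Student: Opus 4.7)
The plan is to adapt the strategy used for Theorem \ref{contact-pmp} in Section \ref{codynpmp} to the Herglotz setting. Since the extended Hamiltonian $H = p_o F + p_i X^i + p_z F$ does not depend on $x^o$, the equation of motion $\dot p_o = -\partial H/\partial x^o = 0$ forces $p_o$ to be a first integral of $X_H$, so the level set $\{p_o = \lambda_o\}$ is invariant under the flow. Working in the regular situation, the compatibility equations $\partial H/\partial u^a = 0$ determine the controls as smooth functions of the remaining variables, so after this elimination the presymplectic vector field $X_H$ descends to a vector field on $\{p_o = \lambda_o\} \cong N_{\lambda_o}$, which carries the intrinsic contact structure $(\eta,R_{\lambda_o})$ supplied by the preceding lemma.

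Once this reduction is in place, I would perform a direct coordinate comparison. The presymplectic $X_H$ computed in Section \ref{sec:hocp_vf} has explicit components in $(x^o,x^i,z,p_i,p_z)$; setting $p_o = \lambda_o$ and dropping the $\partial/\partial p_o$-component yields the vector field written just before the theorem statement. On the other hand, the contact Hamiltonian vector field $Z$ for $(N_{\lambda_o},\eta,H_{N_{\lambda_o}})$ is defined by
\[
  \mathbf{i}(Z)\eta = -H_{N_{\lambda_o}}, \qquad \mathbf{i}(Z)\dd\eta = \dd H_{N_{\lambda_o}} - (R_{\lambda_o}(H_{N_{\lambda_o}}))\eta.
\]
Repeating the short computation done in the ``little calculus'' paragraph of Section \ref{normalsolut}, but now for $H_{N_{\lambda_o}} = \lambda_o F + p_i X^i + p_z F$ with the extra $(z,p_z)$ factor, will produce exactly the same local expression for $Z$, which gives the identification of dynamics. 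Projecting integral curves of $Z$ along the obvious map $N_{\lambda_o}\hookrightarrow T^*(\Real\times M\times\Real)$ and then down to $\Real\times M\times\Real\times U$ (with the regular controls reinstated) then yields the claim.

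The main obstacle I foresee is not the calculation, which is essentially parallel to the one already carried out in Section \ref{normalsolut}, but rather the careful bookkeeping involved in the reduction from $T^*(\Real\times M\times \Real)\times U$ to $N_{\lambda_o}$. One has to confirm that: (i) the kernel component $A^a \partial/\partial u^a$ of the presymplectic solution can be eliminated in the regular case, so the projected dynamics is single-valued; (ii) the compatibility condition produces the same controls in both pictures, which holds because $H_{N_{\lambda_o}}$ is obtained from $H$ simply by fixing $p_o = \lambda_o$ and $\partial H/\partial u^a$ carries no $\partial/\partial p_o$ contribution; and (iii) the extra wedge $\dd z\wedge \dd p_z$ present here, compared with the setting of Section \ref{normalsolut}, contributes correctly to both the $\dot z$ and $\dot p_z$ components of $Z$ upon restriction, which is precisely where the term $p_z F$ in $H_{N_{\lambda_o}}$ enters the Herglotz-type correction $-(p_o+p_z)\partial F/\partial z$ in the $\partial/\partial p_z$ coefficient. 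Once these three points are checked, the two vector fields coincide on $N_{\lambda_o}$ and the theorem follows.
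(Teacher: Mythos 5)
Your proposal is correct and follows essentially the same route as the paper: restrict to the invariant level set $p_o=\lambda_o$, which the preceding lemma identifies as the contact manifold $(N_{\lambda_o},\eta)$, eliminate the controls via the regularity assumption, and check by the same coordinate computation as in Section~\ref{normalsolut} (now including the $(z,p_z)$ pair, which produces the $-(\lambda_o+p_z)\partial F/\partial z$ term) that the contact Hamiltonian vector field of $H_{N_{\lambda_o}}$ coincides with the restricted presymplectic dynamics. The paper's own argument is precisely this comparison of local expressions.
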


The corresponding differential equations for the curves solution to this Hamiltonian contact problem are :
\begin{eqnarray*}
\dot x^o  &  =  &  F \\
 \dot x^i   &  =  &  X^i\\
\dot z &=& F\\
\dot p_i   &  =  &-p_o\frac{\partial F}{\partial x^i}-
p_j\frac{\partial X^j}{\partial x^i}-p_z\frac{\partial F}{\partial x^i}\\
\dot p_z   &  =   &-
 p_o\frac{\partial F}{\partial z}
-p_z\frac{\partial F}{\partial z}
\end{eqnarray*}
Together with the maximization condition, that is the constraints obtained from the compatibility of the presymplectic equation
$$
 \frac{\partial H}{\partial u^1} = 0,\ldots\ldots,\frac{\partial H}{\partial u^k}=0 
$$

\subsection{Reduction of the problem}

We remark that this problem is a generalization of Herglotz variational principle. On the previous section, we showed that the equations obtained through the Pontryagin Maximum Principle could be reduced to obtain the Herglotz equation. In this section, we show that a similar reduction can be applied in this more general case. 

We see from the differential equations above that, taking the same initial condition for both variables, we will have $x^o = z$ for the solutions of problem~\ref{hocp}. Then one of them is irrelevant to the problem, we can eliminate it. As the momentum corresponding to $x^o$ is constant, we can eliminate the pair $(x^o, p_o)$. Observe that, in fact, $p_z$ is also irrelevant to the problem. Indeed, we can reduce the dimension of the state space of the problem; this new manifold is what we will now construct. Consider the Hamiltonian
\begin{equation}
    \begin{aligned}
        H_0 :  W_0 = T^*M \times \Real \times U &\to \mathbb{R}, \\
        (x^i,p_i,z, u^a) &\mapsto p_i X^i(x^i,z, u^a) - \pr_1^* F(x,z,u).
    \end{aligned}
\end{equation}
and the canonical contact form on $T^*M \times \Real$
\begin{equation}
    \eta_0 = \dd z - p_i \dd x^i.
\end{equation}

\begin{teor}\label{thm:hocpr}
  The normal solutions to the problem \ref{hocp} corresponding to $p_o=\lambda_o$ are the projections to $\Real\times M$ of the curves solution to the contact Hamiltonian problem given by $(T^*M \times \Real, \eta_0, H_0)$. 

The differential equations solution to this contact dynamical system are
  \begin{subequations}
    \begin{align}
        \dot{q}^i &= X^i,\\
        \dot{p}_i &= p_i \frac{\partial F}{\partial z} - p_j \frac{\partial X^j}{\partial x^i} + \frac{\partial F}{\partial x^i} - \frac{\partial X^j}{\partial z} p_i p_j, \\
        \dot{z} &= F
    \end{align}
   subjected to the constraints
    \begin{equation}
         \frac{\partial H}{\partial u^a} = \frac{\partial F}{\partial u^a} - p_j \frac{\partial X^j}{\partial u^a} = 0.
    \end{equation}
\end{subequations}

\end{teor}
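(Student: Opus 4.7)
The plan is to prove the theorem in two stages, using Theorem~\ref{thm:EOCP_precontact} as the starting point.

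Stage one is to derive the stated ODE system from the contact Hamiltonian formalism. The form $\eta_0 = \dd z - p_i\, \dd x^i$ is already in Darboux form on $T^*M \times \Real$ with Reeb vector field $\Reeb = \partial/\partial z$, so one can apply directly the local expression \eqref{eq:contact_Hamiltonian_vf_darboux} to $H_0 = p_i X^i - F$. This yields $\dot x^i = X^i$, $\dot z = F$, and the formula for $\dot p_i$ stated in the theorem. On $W_0 = T^*M \times \Real \times U$ the pullback of $\eta_0$ is precontact with characteristic kernel generated by the vector fields $\partial/\partial u^a$, and the compatibility condition $\partial H_0/\partial u^a = 0$ expands to the stated constraint $\partial F/\partial u^a - p_j \partial X^j/\partial u^a = 0$.

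Stage two is to identify curves of $(W_0, \eta_0, H_0)$ with the normal solutions of the extended problem via an explicit reduction of variables. By Theorem~\ref{thm:EOCP_precontact}, the normal solutions are integral curves of the precontact Hamiltonian system $(N_{\lambda_o}, \eta, H_{N_{\lambda_o}})$. Along any such curve, the coupled equations $\dot x^o = F = \dot z$ together with the endpoint conditions $x^o(a) = 0 = z(a)$ force $x^o \equiv z$, while $p_o \equiv \lambda_o$ and $\dot p_z = -(\lambda_o + p_z)\partial F/\partial z$ is linear in $\mu := \lambda_o + p_z$, giving $\mu(t) = \mu(a)\exp\bigl(-\int_a^t \partial F/\partial z\bigr)$, which is non-zero whenever its initial value is non-zero. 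I would then introduce the reduction map
\[
\Phi : \bigl\{p_o = \lambda_o,\ x^o = z,\ \lambda_o + p_z \neq 0 \bigr\} \longrightarrow W_0, \qquad (x^o, x^i, z, p_i, p_z, u^a) \longmapsto \Bigl( x^i, z, -\tfrac{p_i}{\lambda_o + p_z}, u^a \Bigr),
\]
and verify the two identities $\Phi^* \eta_0 = f\, \eta$ and $\Phi^* H_0 = f\, H_{N_{\lambda_o}}$ with the common conformal factor $f = -1/(\lambda_o + p_z)$. The first identity uses $\eta = -(\lambda_o + p_z)\, \dd z - p_i\, \dd x^i$ after restriction to $\{x^o = z\}$; the second is immediate from $H_{N_{\lambda_o}} = (\lambda_o + p_z) F + p_i X^i$. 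This exhibits $\Phi$ as a conformal morphism of precontact Hamiltonian systems, and Theorem~\ref{thm:precontact_equivalence} transfers integral curves, so each solution of $(W_0, \eta_0, H_0)$ lifts to a solution of the original precontact system, whose further projection to $\Real \times M$ recovers the $(\gamma_M, \gamma_z)$ part of a normal solution of problem~\ref{hocp}.

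The main obstacle lies in stage two: confirming that the map $\Phi$ legitimately collapses both the $x^o$ direction (justified by the coupled equation $\dot x^o = \dot z$ and the matching initial conditions) and the $p_z$ direction (absorbed into the rescaling of $p_i$), and doing so in a way that respects the precontact structure rather than just the ODEs. Once the two conformal identities above are checked, Theorem~\ref{thm:precontact_equivalence} closes the argument automatically. A purely computational alternative, bypassing the morphism framework, is to substitute $\tilde p_i = -p_i/(\lambda_o + p_z)$ directly into the equations displayed after Theorem~\ref{thm:EOCP_precontact}, differentiate using the quotient rule and the identity $\dot p_z = -(\lambda_o + p_z)\partial F/\partial z$, and check term by term that $\dot{\tilde p}_i$ and the transformed constraint reduce exactly to the expressions in the statement.
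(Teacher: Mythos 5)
Your proof follows the same route as the paper's own: restrict to the invariant submanifold $\{x^o = z\}$ inside $N_{\lambda_o}$, exhibit a conformal morphism of precontact systems onto $(T^*M\times\Real,\eta_0,H_0)$ with conformal factor $-1/(\lambda_o+p_z)$ (whose nonvanishing you justify by the same linear ODE argument the paper uses), and conclude via Theorem~\ref{thm:precontact_equivalence}, with the displayed ODEs and constraint read off from the Darboux-form contact equations. The one discrepancy is that the paper's displayed reduction map sends $p_i\mapsto -(\lambda_o+p_z)\,p_i$, which does not actually satisfy the claimed identities $\Phi^*\eta_0=f\,\tilde\eta_{\lambda_0}$ and $\Phi^*H_0=f\,\tilde H_{\lambda_0}$, whereas your $p_i\mapsto -p_i/(\lambda_o+p_z)$ does, so your version is the correct form of the same argument.
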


\begin{remark}
In the case that the problem is singular, one would work instead with the precontact system $(T^*M \times \Real \times U, \eta_0, H_0)$, applying the  appropriate constraint algorithm.
\end{remark}

\begin{proof}
  Let $\gamma$ be a solution of the Herglotz optimal control problem. By Theorem~\ref{thm:EOCP_precontact}, we know that there exists a solution curve $\sigma$ of the corresponding contact system on $N_{\lambda_0}$. In order to prove this theorem, we will project $\sigma$ onto a solution of the system $(T^*M \times \Real, \eta_0, H_0)$.

  First of all, notice that the solutions satisfy $x_0 = z$, hence $\sigma$ will lie on the submanifold $j:\tilde{N}_{\lambda_0} \to N_{\lambda_0}$ defined by $x_0 = z$.
  
  This submanifold is tangent to the equations of motion $X$ of the precontact system $(N_{\lambda_0}, \eta_{\lambda_0}, H_{\lambda_0})$. Indeed, the restriction of $X$ to $\tilde{N}_{\lambda_0}$ are just the equations of motion $\tilde{X}$ of the induced precontact system $(\tilde{N}_{\lambda_0}, \tilde{\eta}_{\lambda_0} = j^* \eta_{\lambda_0}, \tilde{H}_{\lambda_0} = j^* H_{\lambda_0})$. In coordinates
  \begin{subequations}
    \begin{align}
      \tilde{\eta}_{\lambda_0} &= (- \lambda_0 - p_z) \dd z - p_i \dd x^i,\\
      \tilde{H}_{\lambda_0} &= (\lambda_0 + p_z) F + p_i X^i
    \end{align}
  \end{subequations}

  Consider the following commutative diagram,
  \begin{equation}
\begin{tikzcd}
  & \tilde{N}_{\lambda_0} \arrow[d, "\tau"] \arrow[ld, "\Phi_{\lambda_0}"'] \\
W_0 \arrow[rd, "\tau_0"] & \Real \times M \times \Real \arrow[d] \arrow[d, "\pi_1"]    \\
  & M \times \Real                                             
\end{tikzcd}
\end{equation}
  where
  \begin{equation}
    \Phi_{\lambda_0}(x^i,z,p_i,p_z) = (x^i, -(\lambda_o+p_z)p_i, z).
  \end{equation}

  Notice that $\Phi_{\lambda_0}$ is a submersion and a conformal equivalence of precontact systems:
  \begin{subequations}
    \begin{align}
      \Phi_{\lambda_0}^* \eta_0 & = -(\lambda_o+p_z) \tilde{\eta}_{\lambda_0}, \\
      \Phi_{\lambda_0}^* H_0    & = -(\lambda_o+p_z) \tilde{H}_{\lambda_0},
    \end{align}
  \end{subequations}
By Theorem~\ref{thm:precontact_equivalence} projections of the solution curves of the precontact system on $\tilde{N}_{\lambda_0}$ are solution curves to the contact system on $TQ\times \Real$. 
\end{proof}

As a consequence of this theorem, we can obtain again the Herglotz equations. Consider the Herglotz problem in section~\ref{herglotz_problem} for a Lagrangian $L:TQ \times \Real \to \Real$. Notice that this problem is a particular case of the Herglotz optimal control problem, where 
\begin{itemize}
    \item Controls are the velocities $u^a = v^i$.
    \item The cost function is the Lagrangian $F=L$.
    \item The control equation is $X =v^i \frac{\partial }{\partial x^i}$.
\end{itemize}

The solutions to this problem are given by Theorem~\ref{thm:hocpr}:
\begin{align}
        \dot{q}^i &= v^i,\\
        \dot{p}_i &= p_i \frac{\partial L}{\partial z}  + \frac{\partial L}{\partial q^i} \\
        \dot{z} &= L\\
        \intertext{with the constraints}
        \frac{\partial L}{\partial v^i} &= p_i,
\end{align}
which are precisely Herglotz equations.

\section{Application: Optimal control on thermodynamic systems}
One possible application of this theory is the study of thermodynamic processes which minimize or maximize some thermodynamic potential. As an example, we apply our formalism to the control systems considered in~\cite{vds-2018}.

The relation between symplectic and contact manifolds via the symplectification procedure has permitted to go deeper in the geometric description of thermodinamic systems. This way has been explored in \cite{BaVa-2001} (see also \cite{arn78,LiMa-1987,IbLeMaDi-1997}).

\subsection{Homogeneous Hamiltonian systems and contact systems}\label{sec:homogenization}
There is a close relationship between homogeneous symplectic and contact systems, see for example~\cite{vds-2018} where this relation is studied. Here we briefly recall the ideas we need to follow the example.

In the general case, if $\pi:M\to B$ is a vector bundle, a function $F:M\to\Real$ is \textit{homogeneous} if, for any $e_p\in M_p=\pi^{-1}(p)$ with $\pi(e_p)=p\in B$, we have $F(\lambda e_p)=\lambda F(e_p)$. In this situation the function $F$ can be projected to the projective bundle $\mathcal{P}(M)$ over $B$ obtained by projectivization on every fibre. We are interested in the case that $M=T^*(Q\times\Real)\to Q\times\Real$, with natural coordinates $(q^i,z,P_i,P_z)$

Let $H$ be an homogeneous Hamiltonian function on $T^*( Q \times \Real)$. Locally, we have that $H(q^i,z, \lambda P_i,  \lambda P_z) = \lambda H(q^i,z,P_i,P_z)$, for all $\lambda \in \Real$. Equivalently, one can write
\begin{equation}
  H(q^i,z,P_i,P_z) = -P_z\,  h(q^i,-P_i/P_z, z),
\end{equation}
for $P_z\neq 0$, where $h: T^*Q \times \Real \to \Real$, $h(q^i,z,p_i)=H(q^i,z,-p_i,-1)$ is well defined.

With the above changes, we have identified the manifold $T^*Q \times \Real$ as the projective bundle $\mathcal{P}(T^* (Q \times \Real))$ of the cotangent bundle $T^*(Q \times \Real)$ taking out the points at infinity,  that is the subset defined by $\{P_z = 0\}$.

Following \cite[Section~4.1]{vds-2018}, the map
\begin{equation}\label{dehomogeneization}
  \begin{aligned}
    \Phi: T^*( Q \times \Real) \setminus \{p_z = 0 \} &\to T^*Q \times \Real\\
    (q^i,z,P_i,P_z) &\to (q^i,P_i/P_z, z) = (q^i, p_i ,z),
  \end{aligned}
\end{equation}
sends the Hamiltonian symplectic system $(T^*( Q \times \Real)\setminus \{p_z = 0 \} , \omega_{Q \times \Real}, H)$ onto the Hamiltonian contact system $(T^*Q \times \Real, \eta_{Q}, h)$, where $\omega_{Q \times \Real} = \dd q^i \wedge \dd P_i + \dd z \wedge \dd P_z$ and $\eta_Q = \dd z - p_i \dd q^i$. Observe that the natural coordinates of $T^*Q\times\Real$, denoted by $(q^i,p_i,z)$, correspond to the homogeneous coordinates in the projective bundle.

In fact, the map $\Phi$ is the projectivization; i.e., the map that sends each point in the fibers of $T^* (Q \times \Real)$ to the line that passes through it and the origin. 

It can be shown that $\Phi$ provides a bijection between conformal contactomorphisms and homogeneous symplectomorphisms. Moreover, $\Phi$ maps homogeneous Lagrangian submanifolds $\mathcal{L} \subseteq T^*( Q \times \Real)$ onto Legendrian submanifolds $\mathbb{L} = \phi(\mathcal{L}) \subseteq T^*Q \times \Real$. See \cite{vds-2018} and Section \ref{apthersys} for more details on this topics.

\subsection{Control of contact systems}\label{cocontsys}
 
On the contact natural manifold $T^* Q\times\Real$, with coordinates $(q^i,p_i,z)$, assume that we are given a parametrized family of Hamiltonians $h: T^* Q\times\Real \times U \to \Real$, $U\subset \Real^k$, with Hamiltonian contact vector fields $X_{h_u}$, where $h_u(q^i,z,p_i)=h(q^i,z,p_i,u)$. Then we can define the control system $Z(q,p,z,u) = X_{h_u}(q,p,z)$, where the following diagram is commutative:
$$
\xymatrix{
&T(T^* Q\times\Real)\ar[d]^{\tau_{T^* Q\times\Real}}\\
{T^* Q\times\Real\times U}\ar[ur]^{Z}\ar[r]^{\quad\pi}&T^* Q\times\Real\\
I\ar[u]^\gamma\ar[ru]^{\Gamma}
}
$$

A curve $\gamma:I\to T^*Q\times\Real\times U$ is an integral curve of $Z$, that is $\Gamma'=Z\circ\gamma$, if in local coordinates satisfies the differential equations
\begin{align*}
  \frac{\dd q^i}{\dd t} & =  \frac{\partial h_u}{\partial p_i}, \\
  \frac{\dd p_i}{\dd t} & =  - \frac{\partial h_u}{\partial q^i} - p_i \frac{\partial h_u}{\partial z},\\
  \frac{\dd z}{\dd t} & =  p_i \frac{\partial h_u}{\partial p_i} - h_u.
\end{align*} 

One can consider the Herglotz optimal control problem given by $Z$, as we stated in  Section~\ref{sec:hocp_vf}. Then, by Theorem~\ref{thm:hocpr}, we know that the normal solutions are the projections of the solutions to the contact system $(T^*(T^*Q) \times \Real, \eta_{T^*Q }, H)$, where
\begin{equation}\label{eq:Hamiltonian_contact_system_control}
  H =  p_{q^i}\frac{\partial h_u}{\partial p_i} - p_{p_i}\frac{\partial h_u}{\partial q^i} - p_i \frac{\partial h_u}{\partial z} -  p_i \frac{\partial h_u}{\partial p_i} + h_u.
\end{equation}

\subsection{Application to thermodynamic systems}\label{apthersys}

We consider thermodynamic systems in the so called \emph{entropy representation}. Hence
the \emph{thermodynamic phase space}, representing the extensive variables,  is the manifold $T^* Q  \times \Real$, equipped with its canonical contact form
\begin{equation}
  \eta_Q = \dd S - p^i \dd q^i.
\end{equation}
The local coordinates on the configuration manifold $Q$ are $(q^i,S)$, where $S$ is the total entropy and $q^i$'s denote the rest of extensive variables.
Other variables, such as the internal energy, may be chosen instead of the entropy, by means of a Legendre transformation.

The state of a thermodynamic system always lies on the equilibrium submanifold $\mathbb{L}\subseteq T^* Q  \times \Real$, which is a Legendrian submanifold, that is, $\eta \vert_{T\mathbb{L}} = 0$ and $\dim\mathbb{L}=\dim Q=n$. The pair $(T^* Q  \times \Real, \mathbb{L})$ is a \emph{thermodynamic system}. The equations (locally) defining $\mathbb{L}$ are called the \emph{state equations} of the system.

On a thermodynamic system $(T^* Q  \times \Real, \mathbb{L})$, one can consider the dynamics generated by a Hamiltonian vector field $X_h$ associated to a Hamiltonian $h$. If this dynamics represents \emph{quasistatic processes}, meaning that at every time the system is in equilibrium, that is, its evolution states remain in the submanifold $\mathbb{L}$, it is required for the contact Hamiltonian vector field $X_h$ to be tangent to $\mathbb{L}$. This happens if and only if $h$ vanishes on $\mathbb{L}$.

Equivalently, by section~\ref{sec:homogenization}, one can consider the \emph{extended thermodynamic phase space} $T^* (Q \times \Real)$ with its canonical symplectic form
\begin{equation}
  \omega_{Q\times\Real} = 
   \dd q^i \wedge \dd P_i + \dd S \wedge \dd P_S.
\end{equation}
In this formulation, a thermodynamic system is a tuple $(T^* (Q  \times \Real), \mathcal{L}))$, where $\mathcal{L}$ is a homogeneous Lagrangian submanifold. Dynamics are given by a homogeneous Hamiltonian $K$. See \cite{vds-2018} for details and recall we have identified, in Section \ref{sec:homogenization}, the bundle $T^*Q\times\Real$ with the projective bundle $\mathcal{P}(T^* (Q \times \Real))$.

Port-thermodynamic systems were introduced in~\cite{vds-2018}, but in a homogeneous symplectic formalism. 
\begin{definition}[Port-thermodynamic system]\label{portthermo}
   A \emph{port-thermodynamic system} on $T^*(Q \times \Real)$ is defined as a pair $(\mathcal{L},K)$, where the homogeneous Lagrangian submanifold $\mathcal{L} \subset T^*(Q\times \Real)$ specifies the state properties. The dynamics is given by the homogeneous Hamiltonian dynamics with parametrized homogeneous Hamiltonian $K:= K^a + {K^c}_a u^a : T^*(Q\times \Real) \to \Real, \, u \in \Real^k$, $K^c: T^*(Q\times \Real) \to \Real^k$, with $K^a$, $K^c$ both equal to zero on the points of $\mathcal{L}$, and $K^a$ as the internal Hamiltonian. One need the additional condition
  \begin{equation}
  \frac{\partial K}{\partial S} |_{\mathcal{L} }\geq 0,
  \end{equation}
  so that the second law of thermodynamics holds.
  \end{definition}

    Using the results of section~\ref{sec:homogenization}, we could instead consider the following contact formulation.
  \begin{definition}[Port-thermodynamic system, contact formalism]\label{portthermocontact}
    A port-thermodynamic system on $(T^*Q \times \Real, \eta_Q)$ is defined as a pair $(\mathbb{L},h)$, where the Legendrian submanifold ${\mathbb{L}} \subset T^*Q \times \Real$ specifies the state properties. The dynamics is given by the contact Hamiltonian dynamics with parametrized contact Hamiltonian $h = h^a + h^c_a u^a : T^*Q \times \Real \to \Real, \, u \in \Real^m$, $h^c: T^*Q\times \Real \to \Real^k$, with $h^a,h^c$ zero on $\mathbb{L}$, and the internal Hamiltonian $h^a$ satisfying 
    \begin{equation}
    \frac{\partial h}{\partial S} |_{\mathbb{L} }\geq 0,
    \end{equation}
      so that the second law of thermodynamics holds.
 \end{definition}
 
 Our theory provides tools to understand which of the available thermodynamic processes minimize the entropy production of the system. Observe that we can consider processes that maximize or minimize other thermodynamic variables, such as the energy, via a Legendre transform. 
 \subsection{Example: Gas-Piston-Damper system}
 We end this section with an explicit example which can be found in~\cite{vds-2018}.
 
  Consider an adiabatically isolated cylinder closed by a piston containing a gas with internal energy $U(V,S)$.
  
  The extended phase space has the following extensive variables
 \begin{itemize}
     \item the momentum of the piston $\pi$,
     \item the volume of the gas $V$,
     \item the energy $E$,
     \item the entropy $S$.
 \end{itemize}
 They correspond to $Q\times\Real$ with local coordinates $(V,\pi,E,S)$.
 The Legendrian submanifold is given by
 \begin{equation}
       \mathbb{L} = \{(V,\pi,E,p_V,p_{\pi},p_E,S) | E= \frac{\pi^2}{2m} + U(S,V),   p_V = -p_E \frac{\partial{U}}{\partial{V}} , p_{\pi}= -p_E \frac{\pi}{m}, p_E = 1/\frac{\partial U}{\partial S} \}
 \end{equation}
  
  The energy is then given by 
  \begin{equation}
        h = p_V\frac{\pi}{m} +p_{\pi}\left(-\frac{\partial U}{\partial V} -d\frac{\pi}{m}\right) - \frac{d (\frac{\pi}{m})^2}{\frac{\partial U}{\partial S}} + \left(p_{\pi} + p_E \frac{\pi}{m}\right)u,
  \end{equation}
where $d$ is the diameter of the piston and $m$ is its mass.

The Hamiltonian vector field is given by
\begin{equation}
\begin{gathered}
    X_h = \frac{{\pi}}{m} \frac{\partial}{\partial V } + \left( -\frac{{\pi} d}{m} + u - \frac{\partial\,U}{\partial V} \right) \frac{\partial}{\partial {\pi} }
    + \frac{{\pi} u}{m} \frac{\partial}{\partial E }\\
    + \left( {\left({p_\pi} \frac{\partial^2\,U}{\partial V\partial S} - \frac{{\pi}^{2} d \frac{\partial^2\,U}{\partial S ^ 2}}{m^{2} \left(\frac{\partial U}{\partial S}\right)^{2}}\right)} {p_V} + {p_\pi} \frac{\partial^2\,U}{\partial V ^ 2} - \frac{{\pi}^{2} d \frac{\partial^2\,U}{\partial V\partial S}}{m^{2} \left(\frac{\partial U}{\partial S}\right)^{2}} \right) \frac{\partial}{\partial {p_V} }\\
    + \left( {\left({p_\pi} \frac{\partial^2\,U}{\partial V\partial S} - \frac{{\pi}^{2} d \frac{\partial^2\,U}{\partial S ^ 2}}{m^{2} \left(\frac{\partial U}{\partial S}\right)^{2}}\right)} {p_\pi} + \frac{d {p_\pi}}{m} - \frac{{p_E} u}{m} - \frac{{p_V}}{m} + \frac{2 \, {\pi} d}{m^{2} \frac{\partial U}{\partial S}} \right) \frac{\partial}{\partial {p_\pi} }\\ + {\left({p_\pi} \frac{\partial^2\,U}{\partial V\partial S} - \frac{{\pi}^{2} d \frac{\partial^2\,U}{\partial S ^ 2}}{m^{2} \left(\frac{\partial U}{\partial S}\right)^{2}}\right)} {p_E} \frac{\partial}{\partial {p_E} } \\
    + \left( \frac{{\pi}^{2} d}{m^{2} \frac{\partial U}{\partial S}} \right) \frac{\partial}{\partial S }
\end{gathered}
\end{equation}

We construct the contact Hamiltonian system $(T^*(T^*Q)\times \Real, \eta_{T^*Q}, H)$ as in~\eqref{eq:Hamiltonian_contact_system_control}:
\begin{equation}
\begin{gathered}
  H = 
-{\left(\frac{d {p_\pi}}{m} - \frac{{p_E} u}{m} - \frac{{p_V}}{m} + \frac{2 \, {\pi} d}{m^{2} \frac{\partial U}{\partial S}}\right)} {P_\pi} \\- {\left({p_\pi} \frac{\partial^{2}}{(\partial V)^{2}}U\left(V, S\right) - \frac{{\pi}^{2} d \frac{\partial^{2}}{\partial V\partial S}U\left(V, S\right)}{m^{2} \frac{\partial U}{\partial S}^{2}}\right)} {P_V}\\ - {\left(\frac{{\pi} d}{m} - u + \frac{\partial}{\partial V}U\left(V, S\right)\right)} {P_{p_\pi}} + \frac{{\pi} {P_{p_E}} u}{m} + \frac{{\pi} {P_{p_V}}}{m} - \frac{{\pi}^{2} d}{m^{2} \frac{\partial U}{\partial S}},
\end{gathered}
 \end{equation}
 where we denote by $q^i, p_{q^i}, \Pi_{q^i}, \Pi_{p_{q^i}}$ the natural coordinates on $T^*T^*Q$, where $q^i$ runs through $V,\pi,E$, and $\Pi_{q^i}, \Pi_{p_{q^i}}$ are the corresponding moments to $q^i,p_i$ respectively.
 
The solutions to the control problem are then the integral curves of the Hamiltonian vector field of this system, which are the following

\begin{equation}
\begin{aligned}
{\dot{V}} &= \frac{{\pi}}{m} \\
{\dot{ {\pi}}} &= -\frac{{\pi} d}{m} + u - \frac{\partial\,U}{\partial V} \\
{\dot{ E}} &= \frac{{\pi} u}{m} \\ 
{\dot{ {p_V}}} &= {\left({p_\pi} \frac{\partial^2\,U}{\partial V\partial S} - \frac{{\pi}^{2} d \frac{\partial^2\,U}{\partial S ^ 2}}{m^{2} \left(\frac{\partial U}{\partial S}\right)^{2}}\right)} {p_V} + {p_\pi} \frac{\partial^2\,U}{\partial V ^ 2} - \frac{{\pi}^{2} d \frac{\partial^2\,U}{\partial V\partial S}}{m^{2} \left(\frac{\partial U}{\partial S}\right)^{2}} \\
\dot{p_\pi} &= {\left({p_\pi} \frac{\partial^2\,U}{\partial V\partial S} - \frac{{\pi}^{2} d \frac{\partial^2\,U}{\partial S ^ 2}}{m^{2} \left(\frac{\partial U}{\partial S}\right)^{2}}\right)} {p_\pi} + \frac{d {p_\pi}}{m} - \frac{{p_E} u}{m} - \frac{{p_V}}{m} + \frac{2 \, {\pi} d}{m^{2} \frac{\partial U}{\partial S}} \\
\dot{p_E} &= {\left({p_\pi} \frac{\partial^2\,U}{\partial V\partial S} - \frac{{\pi}^{2} d \frac{\partial^2\,U}{\partial S ^ 2}}{m^{2} \left(\frac{\partial U}{\partial S}\right)^{2}}\right)} {p_E} \\
\dot{ S} &=\frac{{\pi}^{2} d}{m^{2} \frac{\partial U}{\partial S}}\\
\dot{\Pi_V} &=  \alpha {\Pi_V} - \frac{{\Pi_\pi}}{m}\\
\dot{\Pi_\pi} &=  \alpha {\Pi_E} - \frac{{\Pi_\pi} u}{m}\\
\dot{\Pi_{p_V}} &= -{p_\pi} {p_V} {\Pi_V} \frac{\partial^3\,U}{\partial V ^ 2\partial S} - {p_\pi} {\Pi_V} \frac{\partial^3\,U}{\partial V ^ 3} - {p_V} {\Pi_{p_\pi}} \frac{\partial^2\,U}{\partial V\partial S} - {p_\pi} {\Pi_{p_V}} \frac{\partial^2\,U}{\partial V\partial S} \\& +  \alpha {\Pi_{p_V}} - {\Pi_{p_\pi}} \frac{\partial^2\,U}{\partial V ^ 2} + \frac{{\pi}^{2} d {p_V} {\Pi_V} \frac{\partial^3\,U}{\partial V\partial S ^ 2}}{m^{2} \left(\frac{\partial U}{\partial S}\right)^{2}} - \frac{2 \, {\pi}^{2} d {p_V} {\Pi_V} \frac{\partial^2\,U}{\partial V\partial S} \frac{\partial^2\,U}{\partial S ^ 2}}{m^{2} \left(\frac{\partial U}{\partial S}\right)^{3}} \\& - \frac{2 \, {\pi}^{2} d {\Pi_V} \frac{\partial^2\,U}{\partial V\partial S}^{2}}{m^{2} \left(\frac{\partial U}{\partial S}\right)^{3}} + \frac{{\pi}^{2} d {\Pi_V} \frac{\partial^3\,U}{\partial V ^ 2\partial S}}{m^{2} \left(\frac{\partial U}{\partial S}\right)^{2}} + \frac{2 \, {\pi} d {p_V} {\Pi_\pi} \frac{\partial^2\,U}{\partial S ^ 2}}{m^{2} \left(\frac{\partial U}{\partial S}\right)^{2}} + \frac{{\pi}^{2} d {\Pi_{p_V}} \frac{\partial^2\,U}{\partial S ^ 2}}{m^{2} \left(\frac{\partial U}{\partial S}\right)^{2}} + \frac{2 \, {\pi} d {\Pi_\pi} \frac{\partial^2\,U}{\partial V\partial S}}{m^{2} \left(\frac{\partial U}{\partial S}\right)^{2}}\\
\dot{\Pi_{p_\pi}} &= -{p_\pi}^{2} {\Pi_V} \frac{\partial^3\,U}{\partial V ^ 2\partial S} - 2 \, {p_\pi} {\Pi_{p_\pi}} \frac{\partial^2\,U}{\partial V\partial S} +  \alpha {\Pi_{p_\pi}} + \frac{{\pi}^{2} d {p_\pi} {\Pi_V} \frac{\partial^3\,U}{\partial V\partial S ^ 2}}{m^{2} \left(\frac{\partial U}{\partial S}\right)^{2}} - \frac{2 \, {\pi}^{2} d {p_\pi} {\Pi_V} \frac{\partial^2\,U}{\partial V\partial S} \frac{\partial^2\,U}{\partial S ^ 2}}{m^{2} \left(\frac{\partial U}{\partial S}\right)^{3}}  \\ & - \frac{d {\Pi_{p_\pi}}}{m} + \frac{{\Pi_{p_E}} u}{m} + \frac{2 \, {\pi} d {p_\pi} {\Pi_\pi} \frac{\partial^2\,U}{\partial S ^ 2}}{m^{2} \left(\frac{\partial U}{\partial S}\right)^{2}} + \frac{{\pi}^{2} d {\Pi_{p_\pi}} \frac{\partial^2\,U}{\partial S ^ 2}}{m^{2} \left(\frac{\partial U}{\partial S}\right)^{2}} + \frac{{\Pi_{p_V}}}{m} + \frac{2 \, {\pi} d {\Pi_V} \frac{\partial^2\,U}{\partial V\partial S}}{m^{2} \left(\frac{\partial U}{\partial S}\right)^{2}} - \frac{2 \, d {\Pi_\pi}}{m^{2} \frac{\partial U}{\partial S}}\\
\dot{\Pi_{p_E}} &= -{p_E} {p_\pi} {\Pi_V} \frac{\partial^3\,U}{\partial V ^ 2\partial S} - {p_\pi} {\Pi_{p_E}} \frac{\partial^2\,U}{\partial V\partial S} - {p_E} {\Pi_{p_\pi}} \frac{\partial^2\,U}{\partial V\partial S} +  \alpha {\Pi_{p_E}} + \frac{{\pi}^{2} d {p_E} {\Pi_V} \frac{\partial^3\,U}{\partial V\partial S ^ 2}}{m^{2} \left(\frac{\partial U}{\partial S}\right)^{2}} \\
 &- \frac{2 \, {\pi}^{2} d {p_E} {\Pi_V} \frac{\partial^2\,U}{\partial V\partial S} \frac{\partial^2\,U}{\partial S ^ 2}}{m^{2} \left(\frac{\partial U}{\partial S}\right)^{3}} + \frac{2 \, {\pi} d {p_E} {\Pi_\pi} \frac{\partial^2\,U}{\partial S ^ 2}}{m^{2} \left(\frac{\partial U}{\partial S}\right)^{2}} + \frac{{\pi}^{2} d {\Pi_{p_E}} \frac{\partial^2\,U}{\partial S ^ 2}}{m^{2} \left(\frac{\partial U}{\partial S}\right)^{2}},
\end{aligned}
\end{equation}
where
\begin{align*}
    \alpha &= \frac{\partial F}{\partial S}-\Pi_j \frac{\partial X_j} {\partial S} \\ 
    &= \scriptstyle -{p_E} {p_\pi} {\Pi_{p_E}} \frac{\partial^3\,U}{\partial V\partial S ^ 2} - {p_\pi}^{2} {\Pi_{p_\pi}} \frac{\partial^3\,U}{\partial V\partial S ^ 2} - {p_\pi} {p_V} {\Pi_{p_V}} \frac{\partial^3\,U}{\partial V\partial S ^ 2} - {p_\pi} {\Pi_{p_V}} \frac{\partial^3\,U}{\partial V ^ 2\partial S} + {\Pi_\pi} \frac{\partial^2\,U}{\partial V\partial S} - \frac{2 \, {\pi}^{2} d {p_E} {\Pi_{p_E}} \left(\frac{\partial^2\,U}{\partial S ^ 2}\right)^{2}}{m^{2} \left(\frac{\partial U}{\partial S}\right)^{3}} \\ & -  \scriptstyle \frac{2 \, {\pi}^{2} d {p_\pi} {\Pi_{p_\pi}} \left(\frac{\partial^2\,U}{\partial S ^ 2}\right)^{2}}{m^{2} \left(\frac{\partial U}{\partial S}\right)^{3}} - \frac{2 \, {\pi}^{2} d {p_V} {\Pi_{p_V}} \left(\frac{\partial^2\,U}{\partial S ^ 2}\right)^{2}}{m^{2} \left(\frac{\partial U}{\partial S}\right)^{3}} + \frac{{\pi}^{2} d {p_E} {\Pi_{p_E}} \frac{\partial^3\,U}{\partial S ^ 3}}{m^{2} \left(\frac{\partial U}{\partial S}\right)^{2}} + \frac{{\pi}^{2} d {p_\pi} {\Pi_{p_\pi}} \frac{\partial^3\,U}{\partial S ^ 3}}{m^{2} \left(\frac{\partial U}{\partial S}\right)^{2}} \\ & \scriptstyle + \frac{{\pi}^{2} d {p_V} {\Pi_{p_V}} \frac{\partial^3\,U}{\partial S ^ 3}}{m^{2} \left(\frac{\partial U}{\partial S}\right)^{2}} + \frac{{\pi}^{2} d {\Pi_{p_V}} \frac{\partial^3\,U}{\partial V\partial S ^ 2}}{m^{2} \left(\frac{\partial U}{\partial S}\right)^{2}} - \frac{2 \, {\pi}^{2} d {\Pi_{p_V}} \frac{\partial^2\,U}{\partial V\partial S} \frac{\partial^2\,U}{\partial S ^ 2}}{m^{2} \left(\frac{\partial U}{\partial S}\right)^{3}} - \frac{ {\pi}^{2} d \frac{\partial^2\,U}{\partial S ^ 2}}{m^{2} \left(\frac{\partial U}{\partial S}\right)^{2}} + \frac{2 \, {\pi} d {\Pi_{p_\pi}} \frac{\partial^2\,U}{\partial S ^ 2}}{m^{2} \left(\frac{\partial U}{\partial S}\right)^{2}}\, ,
\end{align*}
and they are subject to the constraint
\begin{equation}
    \frac{{p_E} {\Pi_\pi}}{m} + \frac{{\pi} {\Pi_{p_E}}}{m} + {\Pi_{p_\pi}} = 0.
\end{equation}

\section{Conclusions and future work}

We have discussed several presentations of the so-called Optimal Control Theory, using presymplectic and contact geometry. These relations allows us to obtain directly a new proof of the equations solving the Herglotz variational principle. One of the main results is just the derivation of a Pontryagin Maximum Principle in the setting of Herglotz optimal control problems, a generalization of the classical optimal control. We have also exhibited how the theory can be applied to thermodynamic systems.

The results obtained in the present paper open many ways to follow, and our intention is to go in these directions; here, there are some of them:
\begin{enumerate}
\item Relations between the contact vakonomic dynamics and the Herglotz Optimal Control Problem, following the same lines that in \cite{MaCoLe-2000} and \cite{MaCoLe-2001} for the symplectic case. 

\item To study the more general  case of Herglotz variational calculus with constraints as in \cite{GraMaMu-2003} and references therein.

\item Reduction of the Herglotz Optimal Control Problem when we are in presence of symmetries, and reconstruction of the original solutions from the reduced ones (see \cite{EchMaMuRo-2003} and \cite{deLeoncomama-2004} for the classical setting).

\item Potential extensions to control problems with dissipation on Lie groupoids and algebroids, and numerical methods to solve them, (see \cite{CoMaMaMa-2006}).

\item Study of contact mechanical systems with controls, their stabilization and tracking problems (see for example, 
\cite{CoMa-2002,MuYa-2002,CoMa-2003}).
\end{enumerate}

\section*{Acknowledgements}

\hspace{5mm} M. de Le\'on and M. Lainz acknowledge the partial finantial support from MINECO Grants MTM2016- 76-072-P and the ICMAT Severo Ochoa project SEV-2015-0554.  
M. Lainz wishes to thank MICINN and ICMAT for a FPI-Severo Ochoa predoctoral contract PRE2018-083203.
M.C. Mu\~noz-Lecanda acknowledges the financial support from the 
Spanish Ministerio de Ciencia, Innovaci\'on y Universidades project
PGC2018-098265-B-C33
and the Secretary of University and Research of the Ministry of Business and Knowledge of
the Catalan Government project
2017--SGR--932.

\addcontentsline{toc}{section}{References}
\itemsep 0pt plus 1pt
\small

\bibliographystyle{abbrv}

\end{document}